\newtheorem*{theorem*}{Theorem}
\newtheorem*{corollary*}{Corollary}
\renewcommand{\thetheoremName}
\newtheorem{theorem}{Theorem}[section]
\newtheorem{lemma}[theorem]{Lemma}
\newtheorem{proposition}[theorem]{Proposition}
\newtheorem{corollary}[theorem]{Corollary}
\newtheorem{definition}[theorem]{Definition}
\newtheorem{example}[theorem]{Example}
\newtheorem{remark}[theorem]{Remark}
\numberwithin{equation}{section}
\definecolor{alert}{rgb}{0.8,0,0.3}
\newcommand{\alert}[1]{%
	\marginpar{%
		\ifodd\value{page} \raggedright \else \raggedleft \fi
		\footnotesize{\textcolor{alert}{#1}}
	}
}
\newcommand{\Hess}{\operatorname{Hess}}
\newcommand{\dist}{\operatorname{dist}}
\newcommand{\Vol}{\operatorname{Vol}}
\newcommand{\Div}{\operatorname{div}}
\newcommand{\C}{\operatorname{Cap}}
\newcommand{\Id}{\operatorname{Id}}
\newcommand{\WCap}{\operatorname{Cap_{\W}}}
\newcommand{\Reff}{\operatorname{R_{eff}}}
\newcommand{\W}{\mathcal{W}}
\begin{document}

\title[Criteria for parabolicity and hyperbolicity]{Criteria for parabolicity and hyperbolicity\\
of conductive Riemannian manifolds}

\author[V. Gimeno-Garcia]{Vicent Gimeno i Garcia}
\address{Departament de Matem\`{a}tiques- IMAC,
Universitat Jaume I, Castell\'{o}, Spain.}
\email{gimenov@uji.es}
\author[A. Hurtado]{Ana Hurtado}
\address{Departamento de Geometr\'{\i}a y Topolog\'{\i}a and Excellence Research Unit ``Modeling Nature'' (MNat), Universidad de Granada, E-18071,
Spain.}
\email{ahurtado@ugr.es}
\author[S. Markvorsen]{Steen Markvorsen}
\address{
 DTU Compute, Technical University of Denmark, Lyngby, Denmark
}
\email{stema@dtu.dk}
\author[V. Palmer]{Vicente Palmer}
\address{Departament de Matem\`{a}tiques- INIT,
Universitat Jaume I, Castell\'{o}, Spain.}
\email{palmer@uji.es}
\thanks{V. Gimeno and V. Palmer were partially supported by the Research grant  PID2020-115930GA-100 funded by MCIN/ AEI /10.13039/501100011033, and  by the project AICO/2023/035 funded by Conselleria de Educaci\'o, Cultura, Universitats i Ocupaci\'o.\\
A. Hurtado was partially supported by the grant PID2023-151060NB-I00 funded by MCIN/AEI/10.13039/501100011033 and by ERDF/EU, and by the grant PID2020-118180GB-I00 funded by MCIN/AEI/10.13039/501100011033.}
\keywords{Laplace operator, Conductive Riemannian manifold, Conductive submanifold, Weighted manifold, Modified distance function, Curvature bounds, Model space comparison, Intrinsic type, Hyperbolicity, Parabolicity, Schouten tensor, Einstein tensor, Newton transform, Compressible gas}
\subjclass[2020]{Primary 53C20; Secondary 58C40}

\maketitle
\begin{abstract}
Motivated by the physics of anisotropic conductive materials we consider a linear elliptic operator $\Delta_{\W}$ of divergence type on a Riemannian manifold $(M^{n}, g)$. The operator is determined by the metric $g$ and by a given \emph{conductivity}, which is modeled by a smooth self adjoint tensor field $\W$ of type $(1,1)$. We establish new conditions for a conductive manifold $(M, g, \W)$ to be $\W$-parabolic or $\W$-hyperbolic. Here, by definition, a $\W$-hyperbolic manifold (as opposed to a $\W$-parabolic manifold) admits an effective electric current $J$, i.e. a bounded potential function  $u$, which is a solution to the $\W$-Laplace equation $\Delta_{\W}(u) = 0$ with a finite flux of the current $J = -\W(\nabla u)$ to infinity. We prove a number of intrinsic conditions on $g$ and $\W$, that tell the type ( $\W$-hyperbolic or  $\W$-parabolic) of conductive Riemannian manifolds. And we prove similar extrinsic conditions for submanifolds (involving also naturally the second fundamental form), that give the type of the submanifolds when they are endowed with the inherited conductivities from the ambient conductive space. Our results are furthermore illustrated by corresponding families of examples, which emphasize how the present setting and results generalize previous findings concerning the usual Laplacian for Riemannian manifolds (with homogeneous, constant, conductivity) as well as similar recent results for weighted manifolds and submanifolds -- see e.g. \cite{Grigoryan1999BAMS}, \cite{GAFA}, \cite{Soliton2021} and references therein. We also present novel examples of $\W$-hyperbolic manifolds where the conductivity tensor is 'extracted' from the curvature tensor of the manifold itself, such as e.g. the metric equivalents of the Einstein tensor and the Schouten tensor.
\end{abstract}
\tableofcontents

\section{Introduction} \label{sec:Intro}

Consider a cylindrical surface $S$ of length $L$ and cross sectional radius $\alpha$ in $3$D --  like the one depicted in figure \ref{fig:Cyl1} below -- and suppose that the cylinder is made of a thin homogeneous electrically conducting material. Now endow the two circular ends of the cylinder with different potentials -- using a battery -- as in the middle display. At the left hand end, $A$, of the cylinder we impose the potential $u = 1$ and at the right hand end, $B$, we let $u=0$. In terms of the distance $r$ from $A$ along the cylinder this means $u(0) = 1$ and $u(L) = 0$. What is then the induced intermediate potential function $u(r)$, $0 < r < L$? \\

\begin{figure}[h]
    \centering
    \includegraphics[width=37mm]{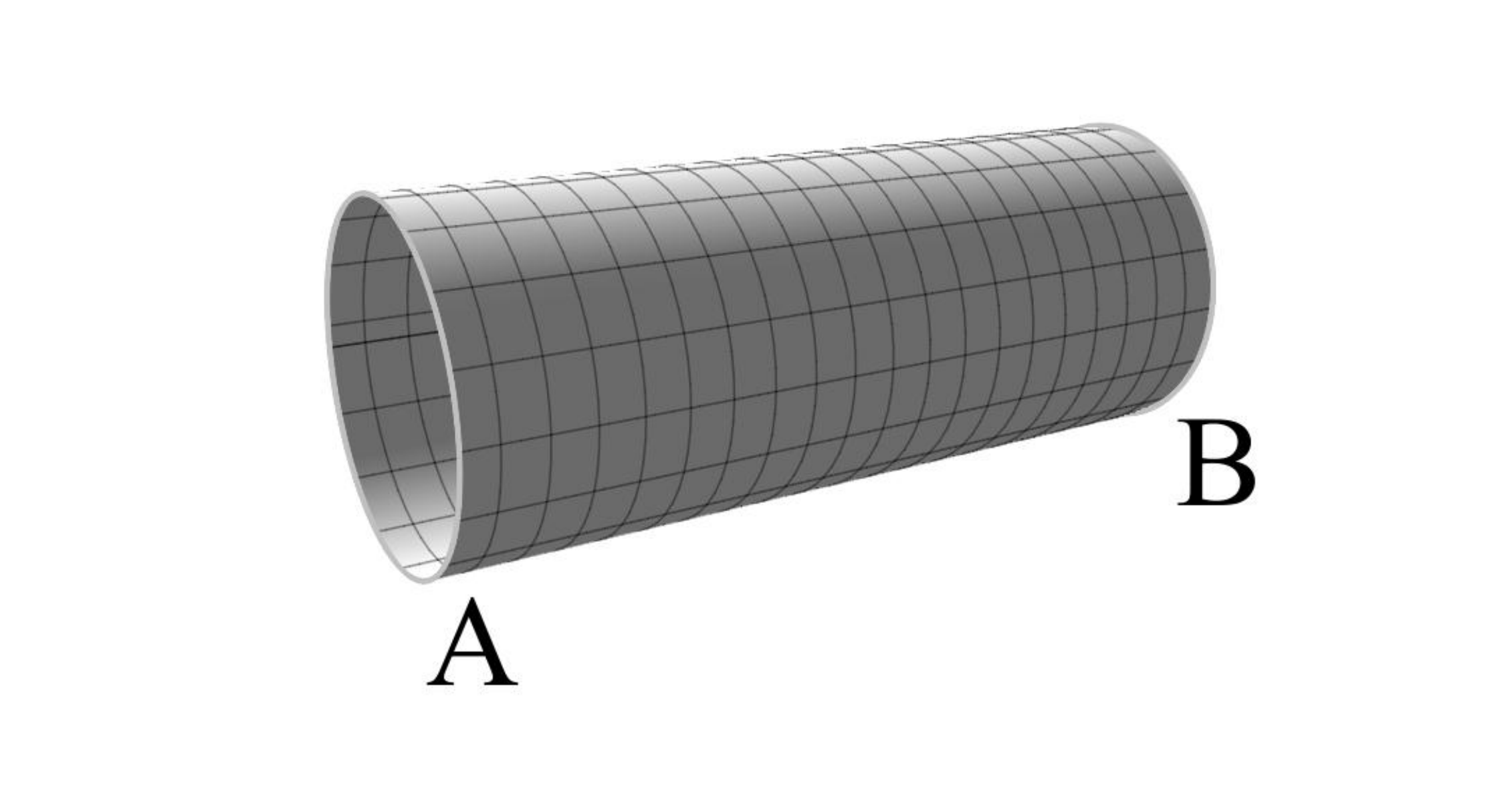}\qquad\includegraphics[width=37mm]{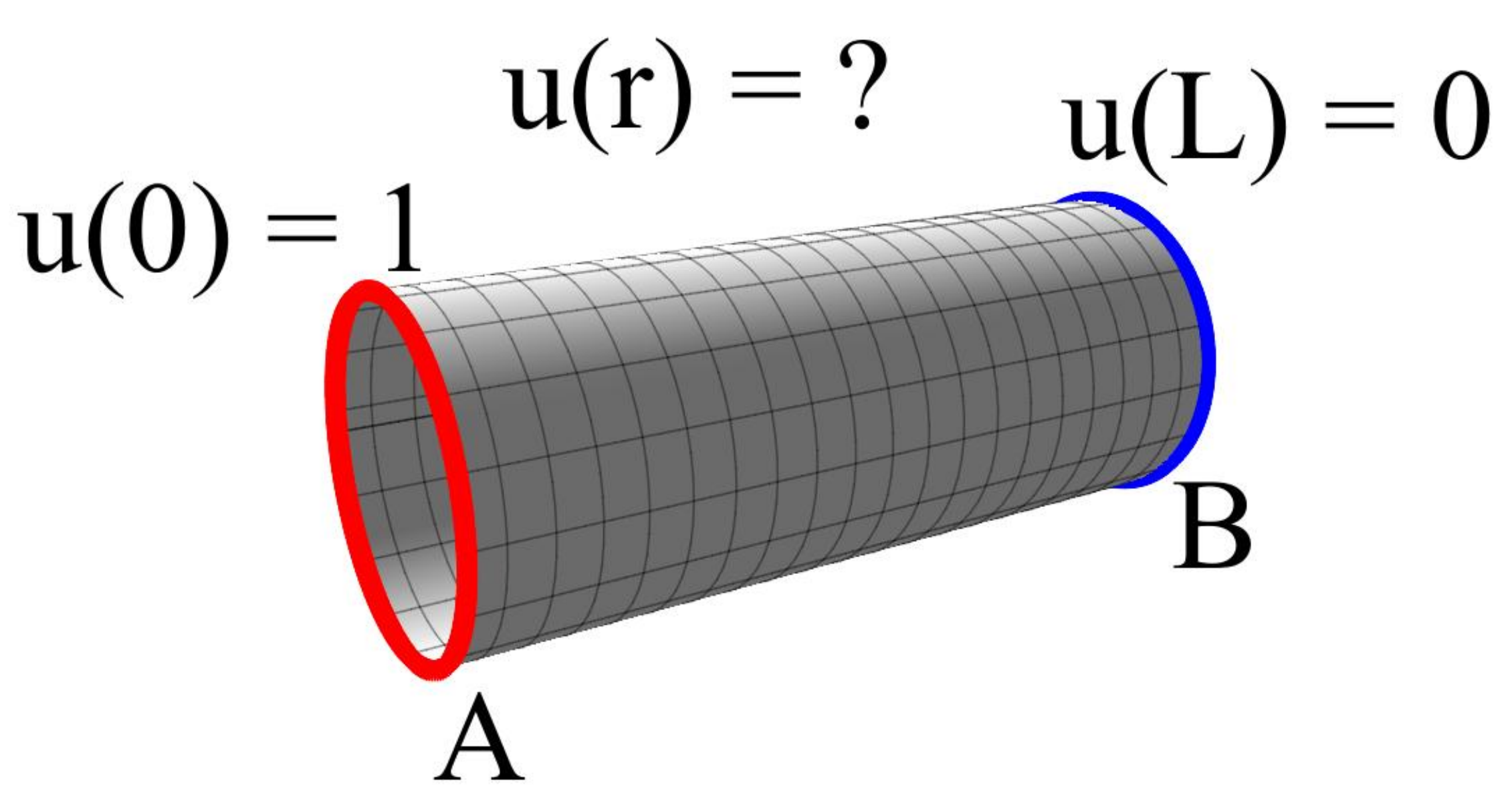}
    \qquad \includegraphics[width=37mm]{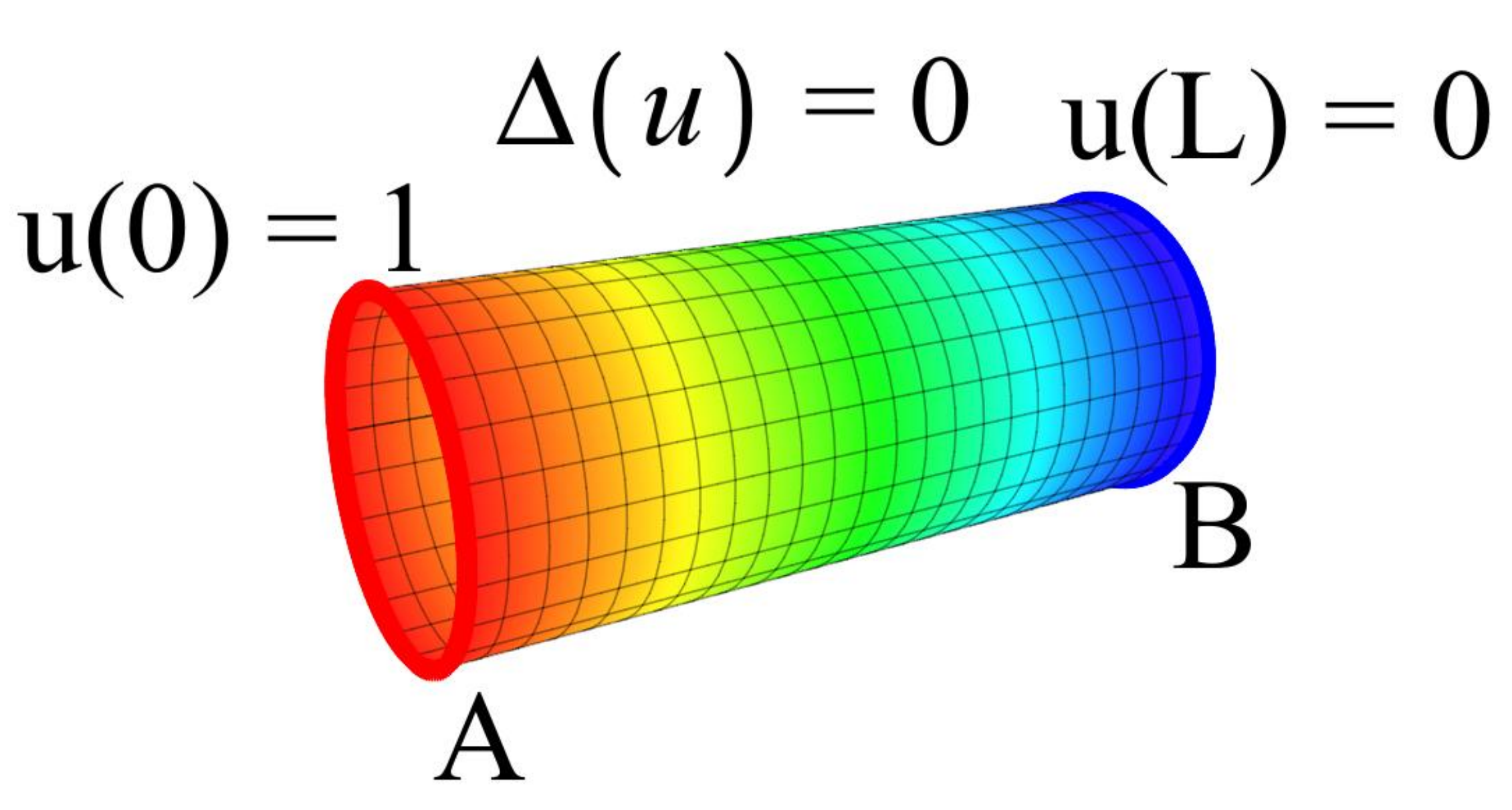}\caption{A finite cylinder with linear (colored) potential function $u(r) = (L-r)/L$ generated by $u(0)=1$ at $A$, $u(L) = 0$ at $B$ and $\Delta\, u =0$ in between.}
    \label{fig:Cyl1}
\end{figure}

The conductivity of the material has the following consequence, which will give the unique answer to that question: The (apriorily unknown) potential function $u$ drives the positive charges away from $A$ towards $B$ in such a way that the flow vector field (the electric current) satisfies the local (infinitesimal) Ohm's law: $J = -c\cdot \nabla u$, where the constant $c>0$ denotes the scalar homogeneous conductivity. But, since the mass of the positive charges is preserved by the flow, the local (and global) flux of $J$  out of any compact subset of $S$ must satisfy $\Div(J) = 0$. In consequence therefore, $\Delta\, u = \Div(\nabla u) = -(1/c)\cdot \Div (J) = 0$, which means that $u$ is a harmonic function and thence uniquely determined by the given boundary conditions.\\

In this particular case (due to the rotational symmetry of the cylinder and the specific boundary conditions) we get the simple solution $u = (L-r)/L$, and thence $J = c/L$ and the total influx current at $A$ (equal to the total outflux current at $B$): $I = 2\pi\rho\cdot c / L$. In total we thus arrive at the well known global Ohm's law for the cylinder: $U = \Reff(S)\cdot I$ with $U = 1$, i.e. the effective resistance of the cylinder is $\Reff(S) = L/ (2\pi\alpha\cdot c)$. The capacity of the cylinder is then, by definition, $\C(S)= 1/\Reff = 2\pi\alpha\cdot c/L$. In this work we shall be particularly interested in the capacities of complete surfaces and manifolds that extend to infinity, like the cylinder for $L \to \infty$. In this limit, in this case, we obviously have $\C(S) = 0$, or equivalently $\Reff(S) = \infty$, so that it is not possible to drive a current from $A$ to the other end at infinity of the homogeneous cylinder. In such cases we say that the surface is of \emph{parabolic type}, a notion that will be elaborated upon in detail below. \\

However, before getting into further technical details concerning the type problem for more general manifolds we must mention that the problem naturally has two possible answers, depending on both the conductivity and on the geometry of the manifold. For example, by choosing other non-constant, non-homogeneous, tensorial conductivities on the cylinder we \emph{can} drive a finite current from $A$ to $\infty$ on the cylinder considered above, still with global potential difference $U = u(0) - u(\infty) = 1$, and thus obtain a surface that is (with this new conductivity) of \emph{hyperbolic type}, i.e. a surface with $\C(S) > 0$. The effect of choosing a non-homogeneous conductivity is illustrated on a finite portion of the cylinder in figure \ref{fig:Cyl2} below.  Also, we may change \emph{the  geometry} of the cylinder into a surface of revolution with a more general profile curve,  as exemplified in figure \ref{fig:Cyl3}, where the surface is again endowed with a constant homogeneous conductivity.

\begin{figure}[h]
    \centering
    \includegraphics[width=45mm]{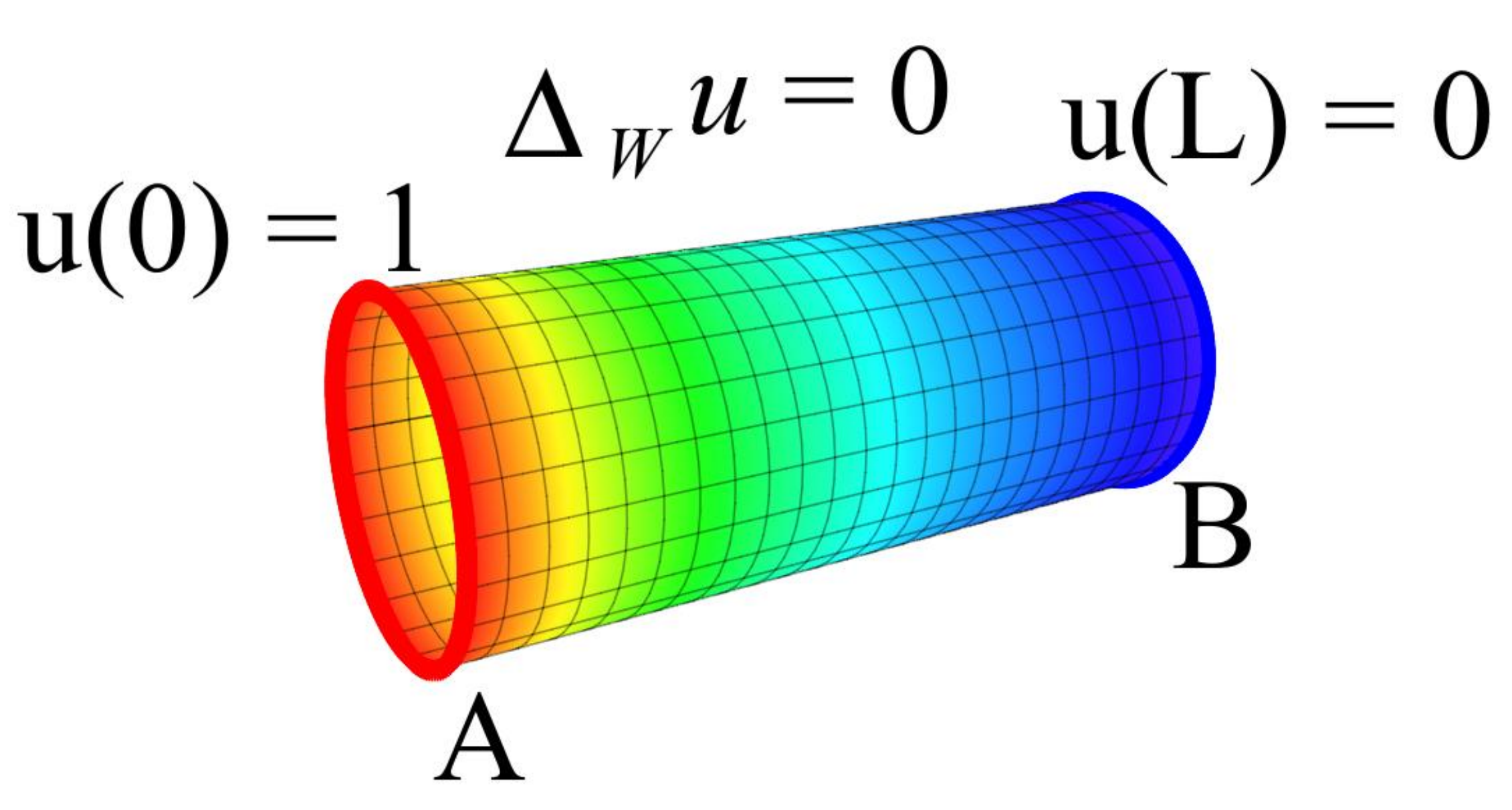}\qquad\includegraphics[width=45mm]{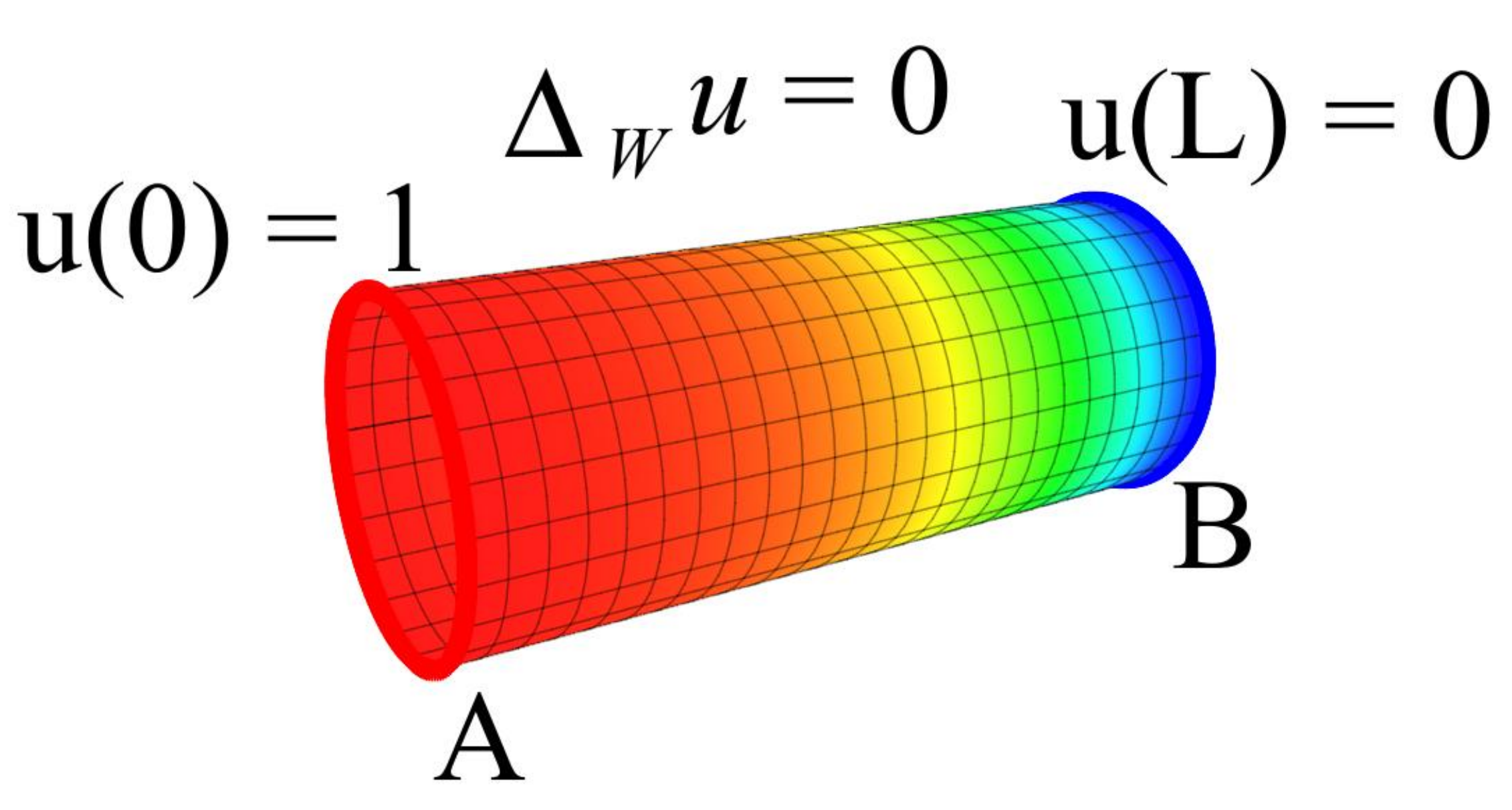}
    \caption{A finite cylinder endowed with two different non-constant $r$-dependent conductivities $\W$. They have corresponding non-linear (colored) potential functions $u(r)$, again generated by $u(0)=1$ at $A$, $u(L) = 0$ at $B$ and $\Delta_{\W}\,u =0$ in between, where now $\Delta_{\W}$ is the $\W$-dependent elliptic operator replacing the usual Laplacian in each of the cases.}
    \label{fig:Cyl2}
\end{figure}

\section{First glimpses of our results and examples} \label{sec:Glimpses}
As already hinted in the introduction, the most general setting in which we will consider the type problem has both nontrivial geometry and nontrivial conductivity. The conductivity $\W$ is given as a smooth positive definite and self adjoint tensor field of type $(1,1)$ on $M$,  $\W \in \mathcal{T}_{1}^{1}(M)$, which molds the key generalized Laplacian operator on $M$ as follows:  

\begin{equation} \label{eq:LaplaceWeq}
\Delta_{\W}\, u = \Div (\W(\nabla u)) \,\, \textrm{for all smooth functions $u \in C^{\infty}(M).$}
\end{equation}

\begin{definition}
Such a Laplacian $\Delta_{\W}$ will be called a \emph{conductive Laplacian}, or simply the $\W$-Laplacian, on $(M, g)$, and  $\W$ itself will be called a \emph{conductivity} on the \emph{conductive Riemannian manifold} $(M, g, \W)$. 
\end{definition}

To elaborate a bit further on the physical meaning and intuition of this otherwise purely geometric definition, we now consider an abstract generalization of the finite $2$-dimensional conductive flat cylinder considered above. Technically, and as a warm up for our extended use of so-called model spaces below (see Definition \ref{model}) we warp the cylinder into a warped product
 $[\rho, \infty) \times_{w} \mathbb{S}^{1}$, $\rho > 0$ with a positive warping function $w$ and projection $\pi$ onto the first factor. The metric $g$ is given by $ds^{2} = dr^{2} + w(r)d\theta^{2}$, where $r\in [\rho, \infty)$ and $\theta \in \mathbb{S}^{1}$.  The manifold $(M_{w}^{2}, g)$ is thence a generalized surface of revolution with boundary $\pi^{-1}(\rho)$.  (Such a $2$-manifold can be isometrically embedded into Euclidean 3-space if and only if  $w'(r) \leq 1$ for all $r$,  since $w(r)$ in that case measures the distance to the axis of revolution.) As mentioned, we now imagine that this surface, this abstract $2$-manifold, is again physically made of a conductive material with conductivity field $\W$, which will produce an (electric) current vector field $J$ on the surface whenever we impose fixed (voltage) potentials on suitable subsets of the surface. For example, as in the cylinder case, we can assume that we endow the circle fiber $\pi^{-1}(\rho)$ with the constant potential $u_{\rho} = 1$ and that we endow the fiber $\pi^{-1}(R)$, $\rho < R$, with the constant potential $u_{R} = 0$. Those given boundary values then induce a potential function $u$ and a corresponding current $J = -\W(\nabla u)$, where $\W$ denotes the conductivity endomorphism operator on each tangent space $T_{p}M$ for the given material, and $\nabla u$ denotes the $g$-gradient of $u$. Again, by preservation of particles, $\Div (J) = 0$, so  the potential $u$ is determined by the $\W$-Laplace equation as follows:  
\begin{equation} \label{1:eq:WLaplace}
 \Delta_{\W}\, u =\Div (\W(\nabla u)) = 0 \, \,  , \, \, u(\pi^{-1}(\rho)) = 1 \,\, , \,\, u(\pi^{-1}(R)) = 0 \quad .
\end{equation}

The simplest possible case is again where the conductive material alluded to above is homogeneous so that the conductivity is constant in all directions, i.e. $\W(X) = c\cdot X$ for all tangent vectors $X \in TM$ and some positive constant $c$. Then the unique solution to the Laplacian boundary value problem \eqref{1:eq:WLaplace} is constant on each fiber and given by:
\begin{equation} \label{eqCylPot}
u(\pi^{-1}(r)) =  \frac{\int_{r}^{R}(1/w(t))\, dt}{\int_{\rho}^{R}(1/w(t))\, dt} \,\,, \,\, \textrm{for all $r\in [\rho, R]$} \quad .
\end{equation}

This is but a classical result, which has recently been generalized much further to weighted warped products, see \cite{hurtado2020a}, and Proposition \ref{capacityweighted}, and Remark \ref{Nomodel} below. Moreover, with constant $w(r) = \alpha$ for $\rho \leq r \leq R$ and a slight change of notation, we recover the linear potential function shown in figure \ref{fig:Cyl1}. \\

As before, the flow vector field $J$ for the positive charges in the material stemming from the potential solution $u$ in \eqref{eqCylPot} has a well-defined in-flux at $\pi^{-1}(\rho)$, which is the same as its out-flux at $\pi^{-1}(R)$. This common flux value is again called the \emph{capacity} of the cylinder $\pi^{-1}([\rho, R])$:
\begin{equation}
\C(\pi^{-1}([\rho, R])) = 2\pi\cdot c \cdot\frac{d}{dr}u(\pi^{-1}(r))  = \frac{2\pi c}{\int_{\rho}^{R}(1/w(t))\, dt} \quad .
    \end{equation}

Further, as before, the out-flux of $J$ at infinity, in the above case $\C(\pi^{-1}([\rho, \infty])$, tells us whether or not the surface allows a current to escape to infinity. \\If  $\C(\pi^{-1}([\rho, \infty]) = 0$ there is no current to infinity with the given boundary conditions -- the conductive surface is \emph{parabolic}. If  $\C(\pi^{-1}([\rho, \infty]) > 0$ there is a finite current to infinity  -- the surface is then \emph{hyperbolic}. As promised, we shall generalize these very simple settings below and define the type ($\W$-parabolic or $\W$-hyperbolic) of any given conductive Riemannian manifold $(M, g, W)$. \\

Let us note immediately, that in the simple $2$-dimensional examples discussed above with constant $\W$, the warped product surface $(M, g)$ is then parabolic if and only if $\int_{\rho}^{\infty}(1/w(t))\, dt = \infty$ and consequently hyperbolic otherwise, i.e. if and only if $\int_{\rho}^{\infty}(1/w(t))\, dt < \infty$.  For consistency, and for comparison with hyperbolic geometry, we must also note that already Milnor observed in \cite{Milnor1977} that $\pi^{-1}([\rho, \infty]$ is hyperbolic in the above sense if and only if it is hyperbolic in the potential-geometric sense, namely if the warped product cylinder is conformally equivalent to a $2$-disk (with an interior disk removed). This particular result is then an illustration of the equivalence between zero capacity of $M$ and the non-existence of non-constant bounded subharmonic functions on $M$ -- see \cite[Theorem 5.1]{Grigoryan1999BAMS}, Definition \ref{1:def:type} and Theorem \ref{1:thm:EquivType} below.\\

Moreover, since $w'(r) \leq 1$ for surfaces of revolution in 3-space, all such surfaces must be of parabolic type, when they are endowed with a constant conductivity. In particular, any plane (where $w(r) = r$) with constant conductivity is parabolic. 

\begin{figure}[h]
    \centering
    \includegraphics[width=70mm]{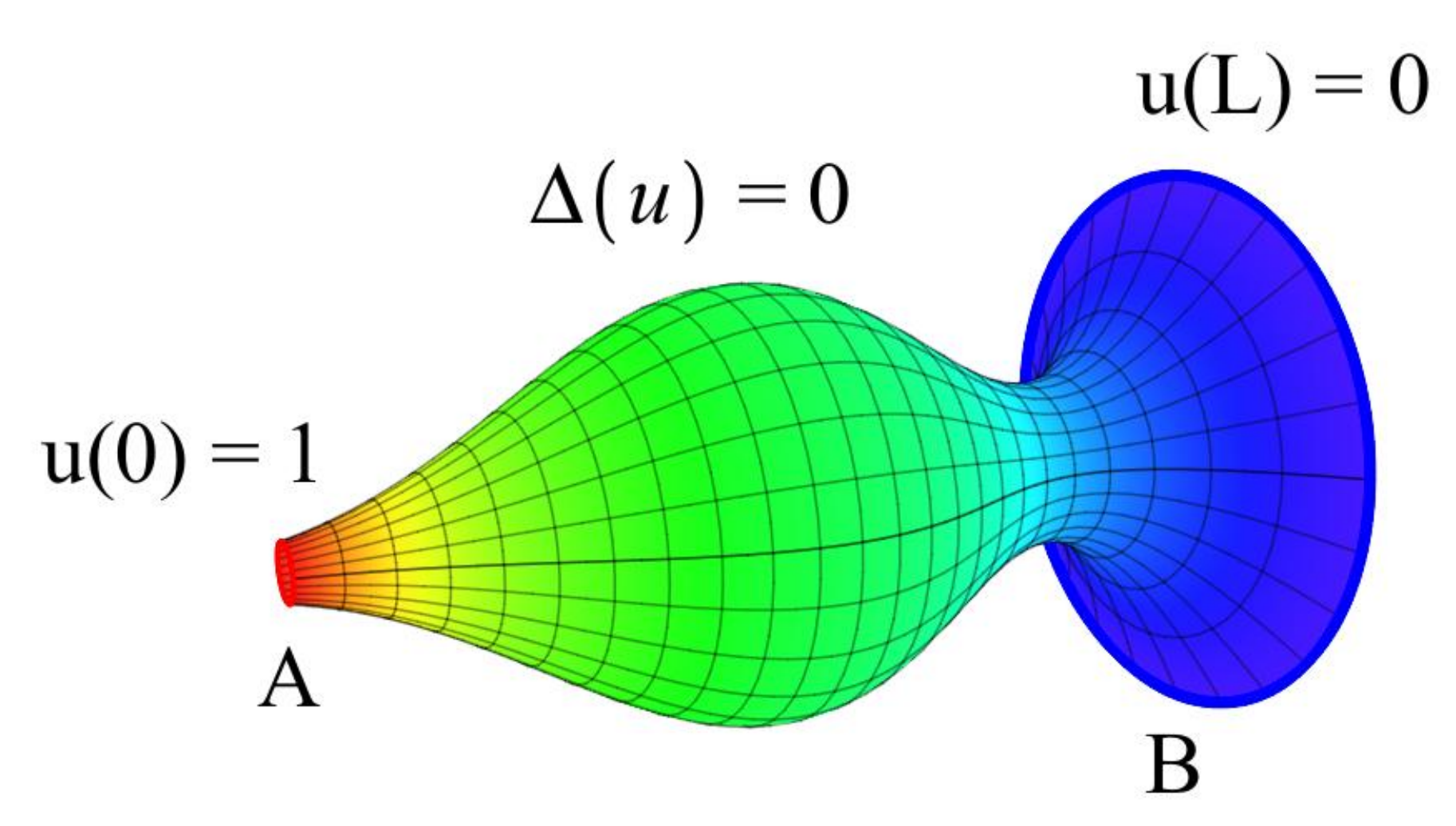}\caption{A finite surface of revolution (warped product with a warping function $w(r)$) endowed with a homogenous constant conductivity. It has a non-linear (colored) potential function $u(r)$ generated by $u(0)=1$ at $A$, $u(L) = 0$ at $B$ and $\Delta\,u =0$ in between. The general solution $u$ in such a setting is presented in equation \eqref{eqCylPot}. Note that the color bar for the values of $u$ is actually directly shown in the rightmost subfigure in figure \ref{fig:Cyl1}. }
    \label{fig:Cyl3}
\end{figure}

A more elaborate $2$-parameter family of examples -- with non-constant conductivities $\W$  modeled on Euclidean  $\mathbb{R}^{2}$ -- is contained in the following Proposition, which follows from our Theorem \ref{thm:Rp}: \\

\begin{proposition} \label{1:prop:2DVicent}
Let $(M^{2}, g, \W) = (\mathbb{R}^{2}, g_{\rm can}, \W_{\lambda,\alpha})$ denote a conductive Riemannian manifold where $g_{\rm can}$ denotes the Euclidean metric in $\mathbb{R}^{2}$, and the conductivity operator $\W_{\lambda,\alpha}$ is given by the following smooth field of tangent space endomorphisms at each point $(x,y) \in \mathbb{R}^{2}$:
\begin{equation}
\begin{aligned}
 \W_{\lambda,\alpha}(\partial x) &= e^{\alpha\cdot(x^2 + y^2)}\left((\lambda+1)\partial x + (\lambda-1)\partial y \right)  \\
 \W_{\lambda,\alpha}(\partial y) &=  e^{\alpha\cdot(x^2 + y^2)}\left( (\lambda-1)\partial x + (\lambda+1)\partial y \right)  \quad ,
 \end{aligned}
\end{equation}
with $\lambda>0$ and $\alpha \in \mathbb{R}$. Then, for any choice of $\lambda>0$, $(\mathbb{R}^{2}, g_{\rm can}, \W_{\lambda,\alpha})$ is $\W_{\lambda,\alpha}$-parabolic for $\alpha \leq 0$ and  $\W_{\lambda,\alpha}$-hyperbolic for $\alpha > 0$. 
\end{proposition}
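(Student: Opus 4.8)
The plan is to bracket the anisotropic conductivity $\W_{\lambda,\alpha}$ between two rotationally symmetric \emph{scalar} conductivities and to reduce each of the bounding cases to a weighted Euclidean plane, whose type is decided by an explicit integral. First I would diagonalize the constant factor: writing $\rho^{2}=x^{2}+y^{2}$ we have $\W_{\lambda,\alpha}=e^{\alpha\rho^{2}}A$, where $A$ is the constant, self adjoint endomorphism obtained by stripping off the positive scalar $e^{\alpha\rho^{2}}$ from the formulas in the statement. A direct computation gives that $A$ has eigenvalue $2\lambda$ on $\partial x+\partial y$ and eigenvalue $2$ on $\partial x-\partial y$; hence $A$ is positive definite precisely when $\lambda>0$, which is why that hypothesis appears, and $2\min(\lambda,1)\,\Id\le A\le 2\max(\lambda,1)\,\Id$. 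Setting $c_{-}=2\min(\lambda,1)$ and $c_{+}=2\max(\lambda,1)$ this yields the pointwise bounds $c_{-}\,e^{\alpha\rho^{2}}\,\Id\le \W_{\lambda,\alpha}\le c_{+}\,e^{\alpha\rho^{2}}\,\Id$ on $\mathbb{R}^{2}$, where now $\rho$ is the genuine Euclidean distance to the origin, hence a radial function.

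For the comparison operators I would use that, for any positive function $\phi$, the conductive Laplacian $\Delta_{\phi\,\Id}u=\Div(\phi\,\nabla u)$ has, as far as the $\W$-capacity and the type are concerned, exactly the same theory as the weighted Laplacian of the weighted manifold $(\mathbb{R}^{2},g_{\rm can},\phi)$, since the Dirichlet energies $\int\langle\phi\,\Id(\nabla u),\nabla u\rangle$ and $\int\phi\,|\nabla u|^{2}$ coincide. Taking the radial weight $\phi=c_{\pm}e^{\alpha r^{2}}$, I would then invoke the weighted model capacity criterion (Proposition \ref{capacityweighted}) to conclude that $(\mathbb{R}^{2},g_{\rm can},c_{\pm}e^{\alpha r^{2}})$ is parabolic if and only if $\int^{\infty}\frac{dr}{r\,e^{\alpha r^{2}}}=\infty$. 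For $\alpha\le 0$ the integrand is $\ge 1/r$, so the integral diverges and both comparison manifolds are parabolic; for $\alpha>0$ the substitution $s=r^{2}$ turns the tail into $\int^{\infty}\frac{ds}{2s\,e^{\alpha s}}<\infty$, so both comparison manifolds are hyperbolic.

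It remains to transfer the type across the bracketing, which follows from monotonicity of the $\W$-capacity: if $\W_{1}\le\W_{2}$ as positive self adjoint fields then $\C_{\W_{1}}\le\C_{\W_{2}}$ for every relatively compact domain, so $\W_{2}$-parabolicity forces $\W_{1}$-parabolicity and $\W_{1}$-hyperbolicity forces $\W_{2}$-hyperbolicity (Definition \ref{1:def:type}, Theorem \ref{1:thm:EquivType}). Thus for $\alpha\le 0$, from $\W_{\lambda,\alpha}\le c_{+}e^{\alpha\rho^{2}}\,\Id$ and the right side being parabolic we get that $(\mathbb{R}^{2},g_{\rm can},\W_{\lambda,\alpha})$ is $\W_{\lambda,\alpha}$-parabolic, while for $\alpha>0$, from $c_{-}e^{\alpha\rho^{2}}\,\Id\le\W_{\lambda,\alpha}$ and the left side being hyperbolic we get that it is $\W_{\lambda,\alpha}$-hyperbolic. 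Equivalently, since the bounds established in the first step are exactly of the radial comparison form required by Theorem \ref{thm:Rp}, both conclusions drop out of that theorem once the model integral has been evaluated. I do not expect a serious obstacle: the only non-formal ingredients are the capacity monotonicity and the identification of a scalar radial conductivity on flat $\mathbb{R}^{2}$ with a weighted model covered by Proposition \ref{capacityweighted}, so there is no hard analytic core here — the real content is the diagonalization of $A$ and the Gaussian-weight reduction carried out in the first two steps.
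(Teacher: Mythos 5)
Your proof is correct, but it follows a genuinely different route from the paper's. The paper proves this statement as Corollary \ref{cor:2DVicent} by feeding $\W_{\lambda,\alpha}$ directly into the machinery of Theorem \ref{thm:Rp}: it computes $\langle\W(\nabla r),\nabla r\rangle$, verifies the trace condition (b)(1) with $q=\tilde q=1+\max\{\lambda,1/\lambda\}$ for $\alpha<0$ and $q=1$ for $\alpha>0$, computes $\langle\Div(\W),\nabla r\rangle=2\alpha r\,\langle\W(\nabla r),\nabla r\rangle$ to verify (b)(2) with $\theta(t)=2\alpha t$, and then evaluates the resulting model integrals (the case $\alpha=0$ is handled separately via Remark \ref{boundedeigen}). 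You instead bracket $\W_{\lambda,\alpha}$ between the isotropic radial conductivities $c_{\pm}e^{\alpha\rho^2}\Id$ (the diagonalization of $A$ with eigenvalues $2\lambda$ and $2$ is right), use monotonicity of the variational capacity \eqref{eq:capacity} in $\W$ together with Theorem \ref{theorGrig}, and decide the type of the isotropic comparison manifolds from Proposition \ref{capacityweighted} via Corollary \ref{cor:LapWfoisotropic}. This is essentially the bracketing idea of Section \ref{sec:com:cap} upgraded from constant to radial scalar bounds, and it is more elementary: it needs no curvature hypothesis, no computation of $\Div(\W_{\lambda,\alpha})$, and no maximum principle, and it treats $\alpha=0$ uniformly. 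What it buys less of is generality — the paper's route illustrates Theorem \ref{thm:Rp}, which also applies when $\W$ is not comparable to any isotropic radial conductivity; your route exploits the special product structure $\W_{\lambda,\alpha}=e^{\alpha\rho^2}A$ with $A$ constant.

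One small inaccuracy: your closing remark that the pointwise bounds are ``exactly of the radial comparison form required by Theorem \ref{thm:Rp}'' overstates the case, since hypothesis (b)(2) of that theorem concerns $\langle\Div(\W),\nabla r\rangle$, which pointwise quadratic-form bounds on $\W$ do not control; to go through Theorem \ref{thm:Rp} one must still compute the divergence, as the paper does. This does not affect your main argument, which is self-contained without that aside.
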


With this background it should now be fairly clear that the capacity $\C_\W(M^{n}, g)$ of a  general conductive Riemannian manifold should depend not only on the geometry of the manifold, but also on the properties of the conductivity operator $\W$. For instance -- with a suitable geometry at infinity -- a 'small' $\W$ at infinity should be able to block the current from escaping to infinity and give $\W$-parabolic type of the conductive manifold, and vice versa, a 'large' $\W$ should be able to push the current to infinity and give $\W$-hyperbolic type.\\

In recent years a great many -- and quite different -- applications of conductive Laplacians have been studied -- not just in mathematical physics but also in differential geometry and in global geometric analysis. 
We mention here only a few previous works in these directions in order to indicate further the spectrum of applications --  some of which are directly related to the present work.\\

In the vein of the physical interpretation of $\W$, the $\W$-Laplace equation $\Delta_{\W}\, u = 0$ together with suitable boundary conditions is also fundamental for the \emph{Riemannian Calder\'{o}n problem}, which is concerned with the difficult inverse task of determining the conductivity $\W$ from suitable knowledge about the solutions $u$ on the boundary of the domains in question, see \cite{sylvester1987a, lee1989a}.\\

To be precise, suppose that $\Omega$ is a compact domain in $(M, g, \W)$ with smooth hypersurface boundary $\partial \Omega$ and a smooth outward pointing normal field $n_{\partial \Omega}$. Suppose that $\W$ is unknown, but that $b_{i} \in C^{\infty}(\Omega)$, $i = 1, \cdots, k \leq \infty$, are a number of known smooth functions on the boundary of $\Omega$ for each one of which  $u_{i}$ is the corresponding unique solution to the $\W$-Laplace equation in $\Omega$ such that $u_{i} |_{\partial \Omega} = b_{i}$. The Riemannian Calder\'{o}n problem is then: Suppose that you also know (measure) the flux integrand $g(\W(\nabla u_{i}), n_{\partial\Omega})$ for each $i$ along the boundary $\partial \Omega$; Is it then possible to construct the conductivity $\W$ in all of $\Omega$? \\

We must note, however, that this Riemannian Calder\'{o}n problem is not directly equivalent to the so-called \emph{geometric Calder\'{o}n problem}, which is the one that is mainly studied -- successfully, and with many applications -- in the inverse problems literature, see \cite{lee1989a} and \cite{uhlmann2009a}. The latter geometric problem is concerned with the reconstruction of a metric $G$, which is metrically equivalent to $\W$ while using the same setting and boundary conditions on $\Omega$ as above, but now with a usual $G$-Laplacian $\Delta_{G}$ for $(M, G)$ in place of $\Delta_{\W}$ for $(M, g, \W)$. For example, in dimensions $n \geq 3$  equivalence of the two problems is obtained if $G = \det(\W)^{\frac{1}{n-2}} \cdot g\cdot \W^{-1}$, see Section \ref{subsec:EquivMetrics}; in particular it follows that $\Delta_{G}\, u = 0$ is then equivalent to $\Delta_{\W} \,u = 0$. Thus, in order to reconstruct $\W$ via solving the \emph{geometric} inverse problem for $G$ we also need information about $\det(\W)$ in the domain $\Omega$ as well as information about $\W$ itself on the boundary $\partial \Omega$.\\

 Following the classical geometric analysis of the ordinary Laplacian $\Delta$ on $(M, g)$, a set of natural questions for $\Delta_{\W}$ is concerned with estimations of its Dirichlet spectrum and its corresponding mean exit time moment spectrum on compact domains in $(M, g, \W)$ -- and, moreover, not least, with the intriguing interdependence between the two spectra, see \cite{dryden2017a, McDonald2002, bessa2006a, gomes2018a, Filho2022}. Even earlier, in \cite{Cheng1977}  Cheng and  Yau take $\W$ to be the (metric equivalent of) the second fundamental form of a non-compact hypersurface with constant scalar curvature (and non-negative sectional curvature) in Euclidean space and use a corresponding second order differential operator to show, among other results, a splitting result to the effect, that such a hypersurface is a generalized cylinder over a sphere.\\ 

In the present paper we consider a related setting for obtaining new results concerning the geometric interplay between $\W$, $g$, and $\Delta_{\W}$. Specifically, we will be concerned with the aforementioned \emph{type problem} for any given conductive Riemannian manifold $(M, g, \W)$: Is it  $\W$-hyperbolic, or is it $\W$-parabolic? \\

In passing we must note already, that we obtain new results for the type problem when the conductivity is already determined by the curvatures of the manifold itself -- as in the vein of \cite{Cheng1977} -- e.g. the Schouten tensor in Section \ref{sec:Schouten}, the Einstein tensor in Section \ref{sec:Einstein}, and the first Newton transform (defined by the mean curvature) in Section \ref{sec:Newton}.\\

As promised above we now give a precise definition of type, which complies with the previous examples of types of conductive $2$D surfaces: 

\begin{definition}\label{1:def:type}
A conductive Riemannian manifold $(M, g, \W)$ is $\W$-parabolic if every $\W$-subharmonic function that is bounded from above must be constant. Otherwise, we say that $M$ is $\W$-hyperbolic.  \\
\end{definition}

However, as already illustrated by the $2$D cases, a useful and equivalent decision criterion for the type problem is obtained as follows:  
Consider special annular domains in $M$, i.e. compact domains $\Omega = \Lambda_{\rho, R}$ with two disjoint boundary components $\partial \Omega_{\rho}$ and $\partial \Omega_{R}$ and potential functions $u$ on $\Lambda_{\rho, R}$ which satisfy $\Delta_{\W}(u) = 0$ in $\Lambda_{\rho, R}$ and having $u_{\partial \Omega_{\rho}} = 1$, $u_{\partial \Omega_{R}} = 0$ on the respective boundaries. It is well known that $u$ is then the minimizer of the total $\W$ energy in $\Omega$ among all functions satisfying the boundary conditions. And that this energy can be read off as the total flux of $\W(\nabla u)$ through any one of the two boundary components.
This energy, this flux, is the $\W$-capacity of $\Omega$ and is then still denoted $\C_{\W}(\Omega) = \C_{\W}(\Lambda_{\rho, R})$.\\

We may specialize the domains $\Omega$ even further and assume, that $\Omega$ is the difference between two topological balls in the manifold $M$, i.e. $\Omega = \Lambda_{\rho, R} = B_{R} - \bar{B_{\rho}}$  -- possibly (geodesic) metric balls with the indicated radii and with the same center point, as will become relevant and employed below.\\

This then gives rise to the following equivalent capacity definition of the type, (see Subsection \ref{subsec:WLap} and Section \ref{sec:CapCondit}):

\begin{theorem} \label{1:thm:EquivType} When $B_{R_{i}}$ is an exhaustion of a non-compact complete conductive manifold $(M, g, \W)$ -- obtained e.g. by letting $R_{i} \to \infty$ for $i \to \infty$ --  then one of two consequences occurs: 
Either $\C_{\W}(\Lambda_{\rho, \infty}) = 0$ in which case $(M, g, \W)$ is $\W$-\emph{parabolic}, or $\C_{\W}(\Lambda_{\rho, \infty}) > 0$ in which case $(M, g, \W)$ is $\W$-\emph{hyperbolic}.
\end{theorem}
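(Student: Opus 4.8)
The plan is to deduce the dichotomy from the variational (Dirichlet) characterisation of $\C_{\W}$, running the classical argument for the ordinary Laplacian (cf. \cite[Theorem 5.1]{Grigoryan1999BAMS}) in the setting of the uniformly elliptic divergence-form operator $\Delta_{\W}$. First I would fix $\rho$ and an exhausting sequence $B_{R_{i}}$, take each $B_{\rho}, B_{R_{i}}$ with smooth boundary, and recall that since $\W$ is smooth and positive definite, $\Delta_{\W}$ is uniformly elliptic on compact sets; hence the boundary value problem \eqref{1:eq:WLaplace} on $\Lambda_{\rho,R}=B_{R}\setminus\bar B_{\rho}$ has a unique solution $u_{R}$, which minimises the Dirichlet energy $\En_{\W}(\phi):=\int_{\Lambda_{\rho,R}}g(\W(\nabla\phi),\nabla\phi)\,dV$ among functions with the prescribed boundary values, and $\C_{\W}(\Lambda_{\rho,R})=\En_{\W}(u_{R})$ equals the flux of $\W(\nabla u_{R})$ through either boundary component (as already noted in the excerpt). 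Extending an admissible competitor on $\Lambda_{\rho,R}$ by the value $0$ on $B_{R'}\setminus B_{R}$ gives an admissible competitor on $\Lambda_{\rho,R'}$ with the same energy, so $R\mapsto\C_{\W}(\Lambda_{\rho,R})$ is non-increasing; being bounded below by $0$, the limit $\C_{\W}(\Lambda_{\rho,\infty})$ exists, and interleaving two exhaustions shows it is independent of the chosen exhaustion.

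Next I would pass to the limit of the potentials. The maximum principle gives $0\le u_{R}\le 1$, so interior Schauder estimates plus a diagonal argument extract a subsequence converging in $C^{2}_{\mathrm{loc}}$ on $M\setminus\bar B_{\rho}$ to a $\W$-harmonic function $u_{\infty}$ with $0\le u_{\infty}\le 1$ and $u_{\infty}=1$ on $\partial B_{\rho}$, while lower semicontinuity of the energy along the subsequence yields $\En_{\W}(u_{\infty})\le\C_{\W}(\Lambda_{\rho,\infty})$. If $\C_{\W}(\Lambda_{\rho,\infty})=0$, then $\En_{\W}(u_{\infty})=0$, so $\nabla u_{\infty}\equiv 0$ and, matching the boundary value, $u_{\infty}\equiv 1$; since every subsequential limit is the same, $u_{R}\to 1$ locally uniformly on $M\setminus\bar B_{\rho}$. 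Given any $\W$-subharmonic $v$ on $M$ with $L:=\sup_{M}v<\infty$, set $m:=\sup_{\bar B_{\rho}}v$ and compare $v$ on $\Lambda_{\rho,R}$ with the $\W$-harmonic barrier $m+(L-m)(1-u_{R})$, which dominates $v$ on $\partial B_{\rho}\cup\partial B_{R}$; the maximum principle gives $v\le m+(L-m)(1-u_{R})$ there, and letting $R\to\infty$ forces $v\le m$ on all of $M$. Hence $v$ attains its supremum at an interior point of $\bar B_{\rho}$, so the strong maximum principle makes $v$ constant, i.e. $M$ is $\W$-parabolic.

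If instead $\C_{\W}(\Lambda_{\rho,\infty})>0$, then the flux of $\W(\nabla u_{\infty})$ through $\partial B_{\rho}$ equals $\C_{\W}(\Lambda_{\rho,\infty})>0$, so $u_{\infty}$ is non-constant. The function equal to $0$ on $\bar B_{\rho}$ and to $1-u_{\infty}$ on $M\setminus B_{\rho}$ is then continuous on $M$, $\W$-harmonic off $\partial B_{\rho}$, has a nonnegative jump in its outward conormal derivative across $\partial B_{\rho}$, and is therefore $\W$-subharmonic on $M$, bounded above by $1$, and non-constant; by Definition \ref{1:def:type}, $M$ is $\W$-hyperbolic. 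Since $\W$-parabolicity and $\W$-hyperbolicity are mutually exclusive, this also shows that whether $\C_{\W}(\Lambda_{\rho,\infty})$ is zero or positive is independent of the choice of $\rho$, completing the dichotomy.

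The substantive ingredients are all of classical type for uniformly elliptic divergence-form operators with smooth coefficients: solvability and interior (and boundary) regularity for \eqref{1:eq:WLaplace}, the weak and strong maximum principles for $\Delta_{\W}$, and the Dirichlet–energy description of $\C_{\W}$; I would expect these to be available from the setup already established for $\Delta_{\W}$ in the paper. The only two places that call for genuine care are upgrading the subsequential convergence $u_{R}\to 1$ to honest local uniform convergence in the parabolic case, so the barrier inequality survives the limit at every point, and checking that the function glued across $\partial B_{\rho}$ in the hyperbolic case is $\W$-subharmonic there; the first is handled by uniqueness of the limit, the second by a standard distributional computation of the conormal jump, so I do not anticipate a serious obstacle.
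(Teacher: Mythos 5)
The paper never actually proves this statement: it is restated as Theorem \ref{theorGrig} in Section \ref{sec:CapCondit} and justified there only by citation to Grigor'yan and to Littman, so your argument is a reconstruction of the classical proof those references supply. It is essentially correct, and the ingredients are the right ones: monotonicity of the annular capacities by extension of competitors, Schauder compactness of the equilibrium potentials $u_{R}$, the barrier $m+(L-m)(1-u_{R})$ in the null-capacity case (note that, as you use, the fluxes converge only because the estimates hold up to $\partial B_{\rho}$, not merely in the interior), and the glued potential in the positive-capacity case; the closing remark that mutual exclusivity of the two types forces the sign of $\C_{\W}(\Lambda_{\rho,\infty})$ to be independent of $\rho$ is a nice touch. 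The one point to press is the last step: the function equal to $0$ on $\widebar{B}_\rho$ and to $1-u_{\infty}$ outside is only Lipschitz, hence $\W$-subharmonic only in the distributional sense, whereas Definition \ref{1:def:type} and the paper's definition of $\W$-subharmonicity quantify over smooth functions. To close the loop you must either enlarge the admissible class of subharmonic functions to continuous (or $W^{1,2}_{\mathrm{loc}}$) ones satisfying the comparison property, as Grigor'yan does, or argue separately that the smooth and weak formulations of $\W$-parabolicity agree; this is a standard convention rather than a mathematical error, and since you already flag the conormal-jump computation yourself, I regard the proposal as sound.
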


These concepts and notions have previously been well studied for Riemannian manifolds and submanifolds (in these cases based on the classical Laplacian) as well as for weighted Riemannian manifolds and submanifolds (based on corresponding gradient-drifted Laplacians and weighted volume forms). We refer to \cite {Grigoryan1999BAMS}, \cite{hurtado2020a, hpr2020a} and \cite{McDonald2002, dryden2017a} for classical as well as recent results in this direction.  \\

The initial step for obtaining our new results for the type problem in the more general conductive setting is to show $\W$-Hessian (and thence $\W$-Laplacian) inequalities for modified distance functions $F(r)$, where $r = \dist_{g}(o, \cdot)$ in $(M, g)$. The specific modifier functions $F$ are obtained by transplantation from suitable comparison conductive model spaces whose radial sectional curvatures bound the corresponding curvatures in $(M, g)$. The maximum principle for $\W$-harmonic functions then eventually gives the type-conclusive bounds on the $\W$-capacity of a given $(M, g, \W)$ in terms of an explicit conductive capacity of the corresponding model space. \\

It is but natural to consider also the type problem for submanifolds as well, i.e. for properly immersed manifolds $\Sigma^{m}$ in a given conductive Riemannian manifold $(M^{n}, g, \W)$.
For a submanifold $\Sigma$ in $(M, g, \W)$ the metric, the connection, the $\W$-Laplacian, and the conductivity can all be inherited from the ambient manifold. Thus $(\Sigma, g_{\Sigma}, \W^{\Sigma})$ becomes a conductive Riemannian manifold in its own right, while at the same time supporting the fruitful interplay -- for example via the second fundamental form -- with the geometry of the ambient space. The ensuing external viewpoint has been a 
key strategy in obtaining several estimates and comparison results for the capacity, and thence also for finding the type, parabolicity/hyperbolicity, of weighted submanifolds, i.e for isotropic conductivities, $\W = e^f\cdot{\rm Id}$, see \cite{hurtado2020a, hpr2020a}. \\

If we are given a conductivity field $\W^{\Sigma}$ on $(\Sigma, g_{\Sigma})$ -- which is a physically reasonable starting point -- then the exponential map from $\Sigma$ provides smooth extensions $\W$ of  $\W_{\Sigma}$ into a tubular neighborhood of $\Sigma$ in $M$ so that $\W^{\Sigma}$ becomes the restriction of $\W$ in the following local sense at each point $p \in \Sigma$: $\W(V) = \W^{\Sigma}(V)$ for all $V \in T_{p}\Sigma$ and $\W(U) \in (T_{p}\Sigma)^\perp$ for all $U \in (T_{p}\Sigma)^\perp$. Note that we cannot in general just restrict $\W$ pointwise to $\Sigma$ and let $\W^{\Sigma}(p) = \W(p)$ for $p \in \Sigma$, because $\W_{\Sigma}$ must physically 'drive' the gradient vector field of any potential function $u \in C^{\infty}(\Sigma)$, so that the corresponding current vector field stays in the tangent bundle of $\Sigma$.\\

In Example \ref{examp:Paraboloid} we consider a classical paraboloid of revolution $(\Sigma, h= g_{|\Sigma})$ in $(\mathbb{R}^{3}, g)$. It is well-known that $(\Sigma, h, c\cdot Id)$ is $c\cdot Id$-parabolic, but we show that $\Sigma$ can also be endowed with an other conductivity $\W^{\Sigma}$ inherited from the ambient space $(\mathbb{R}^{3}, g, \W)$ so that $(\Sigma, h, \W^{\Sigma})$ becomes $\W^{\Sigma}$-hyperbolic. \\

Below we now present an outline of the paper, including brief descriptions of some of the further novelties in, and highlights from, this work.

\subsection{Outline and highlights}
{\bf{Section \ref{sec:Prelim}}} is concerned with setting up the preliminaries for the paper. This includes in particular the precise definitions and expressions related to the conductivity tensor field $\W$, its divergence, and its influence in the definitions of the $\W$-Hessian and the $\W$-Laplacian, as well as its influence on the ensuing notion of $\W$-capacity. 

Our working conditions for the type problem is expressed in terms of the $\W$-capacities in {\bf{Section \ref{sec:CapCondit}}}. Model spaces and weighted model spaces are introduced as our fundamental comparison background tools, and the expression for their (comparison) weighted capacities are reviewed in Proposition \ref{capacityweighted}.

In {\bf{Section \ref{sec:BoundingLapDist}}} we then set up our main general background assumption on the manifold $(M, g)$, which will be instrumental for our comparisons with the (weighted) model spaces. As mentioned above, this hinges in particular on the existence of a pole in $(M, g)$ and on a bound (upper or lower) on the radial sectional curvatures in the manifold -- as seen from the pole -- in terms of the corresponding curvatures of the chosen model space. In effect, this gives first a well-known (lower or upper) bound on the classical Hessian of the distance function from the pole and then -- as stated and proved in Proposition \ref{ineq1intrinsic} -- a new corresponding bound (lower or upper) of the $\W$-Laplacian of a suitably modified distance function on the conductive Riemannian manifold $(M, g, \W)$. These bounds depend naturally on $\W$ itself, the trace and the divergence of $\W$, as well as on the warping function $w$ and the choice of distance modifier function. 

The intermezzo in {\bf{Section \ref{sec:Intermezzo}}} addresses the fact that the 'pole condition' on the manifolds can be avoided, so that the main conditions for the $\W$-capacity estimates -- and therefore the results of the paper -- can instead be expressed directly in terms of inequalities for the Hessian of a suitable distance-like exhaustion function on the manifold.

In {\bf{Section \ref{sec:IntrinsicCrit}}} we show our main comparison result, Theorem \ref{thm:Rp}, which states upper and lower bounds on $\W$-capacities in terms of bounds on the radial sectional curvatures from the pole of $M$ together with inequalities involving $\W$, the trace of $\W$, and the divergence of $\W$. Naturally, these inequalities are then used to express the corresponding consequences for the type problem. In the same setting we show how the pointwise maximal and minimal eigenvalues of $\W$ can be applied as shortcuts to produce similar results. A similar shortcut is obtained from the assumption of $0$ divergence of $\W$ everywhere. Moreover, in this section we also give a proof of Proposition \ref{1:prop:2DVicent} that was already presented in Section \ref{sec:Glimpses}.

In {\bf{Section \ref{sec:ExtrinsicCrit}}} we then consider submanifolds $\Sigma^{m}$ in a conductive Riemannian manifold $(M^{n}, g, \W)$,  and define the notion of $\W$-compatible submanifolds (with inherited conductivity from $M$) and a corresponding notion of $\W$-mean curvature. We then show capacity inequalities and the ensuing type-statements for $(\Sigma, h= g_{\Sigma}, \W^{\Sigma})$. Much in the same way as for the intrinsic setting, but now with the modified ambient distance functions restricted to $\Sigma$ and with an intricate application of what we call the $\W$-mean curvature of $\Sigma$. Together these ingredients then give useful $\W^{\Sigma}$-Laplace inequalities for the distance modifiers as expressed in Proposition \ref{prop:dua}. 

In the following {\bf{Section \ref{sec:BV}}} we show a generalization of the well-known fact that every minimal submanifold of an $n$-dimensional Cartan-Hadamard space is hyperbolic when $n > 2$. In fact, with a 'radial' $\W$-mean curvature vector field, $(\Sigma, h= g_{\Sigma}, \W^{\Sigma})$ is shown to be $W^{\Sigma}$-hyperbolic in such a conductive Cartan-Hadamard manifold $(M^{n}, g, \W)$ if only the induced capacity $\W^{\Sigma}$ has bounded variation, i.e. if it has a specific upper bound (in terms of $m$) for its coefficient of variation. Moreover, for the intrinsic equivalent of this result, the corresponding $n$-dependent upper bound is shown to be sufficient to give $\W$-hyperbolicity of the $\W$-conductive Cartan-Hadamard manifold if just the divergence of $\W$ vanishes everywhere.

In the two last sections, {\bf{Section \ref{sec:Applications}}} and {\bf{Section \ref{sec:GeometricConduc}}} we present a number of new examples and corollaries that show how our findings can be used to establish the type of conductive Riemannian manifolds and submanifolds in some more special settings than those considered previously. We obtain new criteria for the type of submanifolds with inherited isotropic conductivities from their ambient spaces. We use the notion of $\W$-induced equivalent metrics to construct a specific $6$-dimensional $\W$-hyperbolic manifold. And we show how the Schouten tensor, the Einstein tensor, the shape operator of a hypersurface, and the stress-energy tensor of a compressible gas, respectively, can decide the type of a conductive Riemannian manifold when the conductivity is given directly by one of these tensor fields.  Finally, in {\bf{Subsection \ref{subsec:ExtrinExamps}}}, we present concrete examples of $\W^{\Sigma}$-hyperbolic surfaces, cylinders and paraboloids, in ambient Euclidean space $\mathbb{R}^{3}$ endowed with specific ambient conductivities $\W$. These 'extrinsic' examples thus vividly complement the 'intrinsic' examples (mentioned in Proposition \ref{1:prop:2DVicent} in Section \ref{sec:Glimpses}): An inherited conductivity can also change the type of a surface from parabolic to hyperbolic.

\section{Preliminaries} \label{sec:Prelim}

We let $(M^{n}, g=\langle \cdot,\cdot\rangle, \nabla)$ denote a complete Riemannian manifold with metric $g$ and induced Levi-Civita connection $\nabla$. We equip $M$ with a tensor field $\W$ of type $(1,1)$ with two important features, which define a conductivity tensor and thence a conductive Riemannian manifold as already alluded to in the Introduction:

\begin{definition} \label{defConducRiem}
Suppose that {$\W$ is a smooth $(1,1)$-tensor field positive definite and selfadjoint}  with respect to $g$, i.e. at all points $p \in M$ and for all tangent vectors $X$ and $Y$ in $T_{p}M$:
\begin{equation} \label{eqWconditions}
\begin{aligned}
\langle\W(X), Y\rangle &= \langle X, \W(Y)\rangle \quad \textrm{and} \\
\mu(p)\cdot \Vert X \Vert^{2} &\leq  \langle\W(X), X\rangle \leq \kappa(p)\cdot \Vert X\Vert^{2}
\end{aligned}
\end{equation}
for some positive functions  $\kappa$ and $\mu$ in {$C^\infty(M)$}.\\

In particular, the $n$ positive eigenvalues $\lambda_{i}(p)$ of $\W$ at $p$ satisfy these bounds:
\begin{equation} \label{eq:Wspectrum}
0 < \mu(p) \leq \lambda_{1}(p) \leq \lambda_{2}(p) \leq \cdots \leq \lambda_{n}(p) \leq \kappa(p) \, \, \,\, \textrm{at all points $p \in M$}.
\end{equation}

Then the triple $(M^{n}, g,\W)$ is called a \emph{conductive Riemannian manifold} with \emph{conductivity} $\W$. If $\W$ is everywhere proportional to the identity endomorphism, $\W = f\cdot \Id$, for some smooth positive function $f \in C^{\infty}(M)$, then the conductivity is said to be \emph{isotropic} (or \emph{homogeneous} if $f$ is constant); otherwise, it is \emph{anisotropic}.
\end{definition}

In an arbitrary coordinate frame field $\{e_{i} = \partial / \partial x^{i}\}$ with dual one-form basis $\{\theta^{j} \}$, the corresponding coordinates of $\W$ are
\begin{equation}
\W = \sum_{i\, j}\W^{i}_{j}\cdot (e_{i} \otimes \theta^{j}).
\end{equation}

The divergence of the $(1,1)$ tensor field $\W$  is the vector field
\begin{equation} \label{eqDivergence}
\begin{aligned}
\Div(\W) &= \sum_{i\, j}\left( \nabla_{e_{i}}\left(\W(e_{j}) \right) - \W\left(\nabla_{e_{i}} e_{j}\right) \right) \cdot g^{i\, j} \\
&= \sum_{i\,j\,k}\W_{i\,; \,j}^{k}\cdot g^{i\, j}\cdot e_{k},
\end{aligned}
\end{equation}
where we have used the standard $\{e_{i}\}$-indexed coordinates for the covariant derivatives of $\W$ as follows:
\begin{equation} \label{eqDivergenceCoord}
\W_{i \,; \, j}^{k} = \left(\frac{\partial}{\partial x^{j}}\W_{i}^{k}\right) + \sum_{\ell} \Gamma_{\ell \, j}^{k} \cdot \W_{i}^{\ell} - \sum_{\ell} \Gamma_{i \, j}^{\ell}\cdot \W_{\ell}^{k}.
\end{equation}
Indeed, following \cite{Lee2018} the divergence is obtained as the trace on the last two indices of the total covariant derivative of $\W$. To be even more precise:
For any one-form $\omega$ and vector fields $X$ and $Y$:
\begin{equation}
\begin{aligned}
\left(\nabla \W \right)\left(\omega, Y, X \right) &= \left(\nabla_{X} \W \right)\left(\omega, Y\right) \\
&= X\left(\W(\omega, Y) \right) - \W\left(\nabla_{X}\omega, Y \right) - \W(\omega, \nabla_{X}Y),
\end{aligned}
\end{equation}
with coordinates:
\begin{equation}
\left(\nabla \W \right)\left(\theta^{k}, e_{i}, e_{j} \right) = \W_{i \,; \, j}^{k}.
\end{equation}

Here, the last two indices are $i$ and $j$, so the trace is exactly as in \eqref{eqDivergence}.\\

\subsection{The $\W$-Laplacian and $\W$-capacities} \label{subsec:WLap}

The conductivity defines in a natural way the following divergence type Laplacian, see, e.g.
\cite{gomes2018a}.

\begin{definition}
The $\W$-Laplacian is defined by its action on functions {$u \in C^\infty(M)$} as follows:
\begin{equation}
\Delta_{\W}\,u = \Div (\W(\nabla u)).
\end{equation}
\end{definition}

The $\W$-Laplacian of a function {$u \in C^\infty(M)$} can be written in terms of the divergence of the conductivity $\W$ and a modified Hessian operator as follows:

\begin{proposition} \label{prop:LapWexpression} Let $(M^n,g,\W)$ a conductive Riemannian manifold, and {$u \in C^\infty(M)$}. Then,
\begin{equation}
\Delta_{\W}\,u = {\rm tr}(\Hess_{\W}\,u) + \left<\nabla u\, , \,\, \Div(\W) \right>,
\end{equation}
where the $\W$-Hessian of $u$ is then defined as:
\begin{equation}
\left(\Hess_{\W}u\right)(X, Y) = \left< \nabla_{X}\nabla u\, , \,\, \W(Y) \right>.
\end{equation}
\end{proposition}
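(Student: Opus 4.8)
The plan is to expand $\Delta_{\W}\,u = \Div(\W(\nabla u))$ directly using the definition of the divergence of a vector field, and then recognize the two pieces of the resulting expression as $\tr(\Hess_{\W}\,u)$ and $\langle \nabla u, \Div(\W)\rangle$ respectively. Concretely, I would fix a point $p \in M$ and work with a local orthonormal frame $\{e_i\}$ that is geodesic at $p$ (so $\nabla_{e_i}e_j = 0$ at $p$), which keeps the bookkeeping clean without loss of generality since both sides are tensorial/pointwise.

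First I would write, at $p$,
\begin{equation}
\Delta_{\W}\,u = \Div(\W(\nabla u)) = \sum_{i} \langle \nabla_{e_i}\bigl(\W(\nabla u)\bigr), e_i\rangle .
\end{equation}
Then I would apply the Leibniz rule for the covariant derivative of $\W$ acting on the vector field $\nabla u$, in the form $\nabla_{e_i}(\W(\nabla u)) = (\nabla_{e_i}\W)(\nabla u) + \W(\nabla_{e_i}\nabla u)$, giving
\begin{equation}
\Delta_{\W}\,u = \sum_{i}\langle (\nabla_{e_i}\W)(\nabla u), e_i\rangle + \sum_{i}\langle \W(\nabla_{e_i}\nabla u), e_i\rangle .
\end{equation}
For the second sum, self-adjointness of $\W$ moves $\W$ onto $e_i$, so $\sum_i \langle \W(\nabla_{e_i}\nabla u), e_i\rangle = \sum_i \langle \nabla_{e_i}\nabla u, \W(e_i)\rangle = \sum_i (\Hess_{\W}u)(e_i,e_i) = \tr(\Hess_{\W}\,u)$, exactly matching the definition of the $\W$-Hessian in the statement. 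For the first sum, I would identify $\sum_i \langle(\nabla_{e_i}\W)(\nabla u), e_i\rangle$ with the trace on the last two slots of the total covariant derivative $\nabla\W$ evaluated with the one-form $g(\nabla u, \cdot)$ in the first slot; comparing with the coordinate formula \eqref{eqDivergence}–\eqref{eqDivergenceCoord} for $\Div(\W)$ — which is precisely that trace-on-the-last-two-indices — this sum equals $\langle \Div(\W), \nabla u\rangle$. Adding the two pieces yields the claimed identity.

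The only genuinely delicate point is making sure the first sum really is $\langle \nabla u, \Div(\W)\rangle$ and not a transposed/mismatched contraction: one must check that the slot of $\W$ contracted against $\nabla u$ and the slot contracted against $e_i$ (the divergence/trace slot) correspond to the indices $i$ (upper/argument) and $j$ (lower/derivative) in the convention of \eqref{eqDivergence}. Because the paper defines $\Div(\W)$ via the trace on the last two indices of $\nabla\W$ in the ordering $(\theta^k, e_i, e_j)\mapsto \W^k_{i\,;\,j}$, the contraction $\sum_i \langle(\nabla_{e_i}\W)(\nabla u),e_i\rangle$ pairs the derivative index with $e_i$ and the argument index with $\nabla u$, which is exactly $\sum_{i,j,k}\W^k_{i\,;\,j}\,g^{ij}\,(\nabla u)^i$-type contraction after lowering — i.e. $\langle \Div(\W),\nabla u\rangle$. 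If $\W$ were not self-adjoint this matching would be the crux; here symmetry of $\W$ makes the two conventions agree, so the step goes through, and this is the one place I would write out a line or two of index computation rather than assert it. Everything else is a routine application of the Leibniz rule and the definitions already fixed in Section~\ref{sec:Prelim}.
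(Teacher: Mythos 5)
Your proposal is correct and is essentially the same computation as the paper's proof: both expand $\Div(\W(\nabla u))$ in a local orthonormal frame, use self-adjointness of $\W$ (hence of $\nabla_X\W$, since $\nabla g=0$) to move $\W$ onto the frame vectors, and recognize the remaining contraction as $\langle\nabla u,\Div(\W)\rangle$ via the defining formula \eqref{eqDivergence}. The only cosmetic difference is that you invoke the Leibniz rule for $\nabla_{e_i}\W$ in a geodesic frame, while the paper reaches the same split through metric compatibility in a general orthonormal frame; your remark about matching the contracted slots is exactly the right point to flag.
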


\begin{proof}
{Let us consider $\{E_{i}\}_{i=1}^n$ a   local orthonormal frame defined on an open set $U \subset M$. Then, at any point $p \in U$},

\begin{equation*} \label{eqExpand}
\begin{aligned}
\Delta_{\W}\,u =& \Div(\W(\nabla u))= \sum_{i=1}^n\left<\nabla_{E_{i}}\left(\W(\nabla u) \right)\, , \,\, E_{i}  \right> \\
=& \sum_{i=1}^n E_{i}\left<\W(\nabla u)\, , \,\, E_{i}  \right> -  \sum_{i=1}^n\left<\W(\nabla u)\, , \,\,\nabla_{E_{i}} E_{i}  \right> \\
=& \sum_{i=1}^n E_{i}\left<\nabla u\, , \,\, \W(E_{i})  \right> -  \sum_{i=1}^n\left<\nabla u\, , \,\,\W\left(\nabla_{E_{i}} E_{i}\right)  \right> \\
=& \sum_{i=1}^n\left<\nabla_{E_{i}}\nabla u\, , \,\, \W(E_{i}) \right> +
\left<\nabla u\, , \,\, \sum_{i=1}^n\nabla_{E_{i}}(\W(E_{i})) \right> \\ &-  \left<\nabla u\, , \,\,\sum_{i=1}^n\W\left(\nabla_{E_{i}} E_{i}\right)  \right> \\
=& {\rm tr}(\Hess_{\W}\,u) + \left<\nabla u\, , \,\, \Div(\W) \right>.
\end{aligned}
\end{equation*}
\end{proof}

\begin{definition} \label{def:WeightedMnf}
A weighted Riemannian manifold $(M, g, e^{f})$ is a Riemannian manifold $(M,g)$ with a smooth positive function $e^{f}$, that is used to endow $M$ with a measure that has the smooth positive density $e^{f}dV$ with respect to the canonical Riemannian volume element, see \cite{Grigoryan2009Heat}, \cite{hurtado2020a}. We define the weighted Laplacian or $f$-Laplacian of a smooth function $u$  by 
\begin{equation}
\Delta^{f}\,u = \Delta\,u + \left< \nabla u\,, \,\, \nabla f\right>.
\end{equation}

\end{definition}

The weighted Laplacian and the conductive $\W$-Laplacian are directly comparable for isotropic conductivities $\W = e^{f}\cdot \Id$. In this particular case, we have

\begin{corollary} \label{cor:LapWfoisotropic}
When $\W$ is isotropic, i.e. $\W = e^{f}\cdot \Id$, then the expressions for the $\W$-Hessian and the $\W$-Laplacian reduce to:
\begin{equation}
\begin{aligned}
\Hess_{\W}\,u (X, Y) &= \left< \nabla_{X}\nabla u\, , \,\, e^{f}\cdot Y \right> \\
&= e^{f}\cdot \Hess \,u (X,Y),
\end{aligned}
\end{equation}
and 
\begin{equation}
\Delta_{\W}\,u = {\rm tr}(\Hess_{\W}\,u) + \left<\nabla u\, , \,\, \nabla (e^{f}) \right> = e^f\, \Delta^f\, u.
\end{equation}

\end{corollary}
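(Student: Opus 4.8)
The plan is to substitute the isotropic form $\W = e^{f}\cdot\Id$ directly into the definitions from Proposition \ref{prop:LapWexpression} and simplify; no genuinely new idea is needed, since the corollary is just a specialization. For the $\W$-Hessian I would start from the definition $\left(\Hess_{\W}u\right)(X,Y) = \langle \nabla_{X}\nabla u, \W(Y)\rangle$ and replace $\W(Y)$ by $e^{f}\cdot Y$; because $e^{f}(p)$ is a scalar at each point $p$, it factors out of the inner product, leaving $e^{f}\cdot\langle \nabla_{X}\nabla u, Y\rangle = e^{f}\cdot\Hess\,u(X,Y)$. Taking the trace with respect to a local orthonormal frame $\{E_{i}\}$ then gives $\tr(\Hess_{\W}u) = e^{f}\cdot\tr(\Hess\,u) = e^{f}\,\Delta u$.

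For the $\W$-Laplacian the one auxiliary computation worth spelling out is $\Div(\W)$ for $\W = e^{f}\cdot\Id$. Using the definition \eqref{eqDivergence} with the frame $\{E_{i}\}$, I would expand
\begin{equation*}
\nabla_{E_{i}}\bigl(\W(E_{i})\bigr) - \W\bigl(\nabla_{E_{i}}E_{i}\bigr) = \nabla_{E_{i}}\bigl(e^{f}E_{i}\bigr) - e^{f}\nabla_{E_{i}}E_{i} = E_{i}(e^{f})\,E_{i} + e^{f}\nabla_{E_{i}}E_{i} - e^{f}\nabla_{E_{i}}E_{i} = E_{i}(e^{f})\,E_{i},
\end{equation*}
so the two covariant-derivative terms cancel and summing over $i$ yields $\Div(\W) = \sum_{i} E_{i}(e^{f})\,E_{i} = \nabla(e^{f})$. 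Feeding this and the trace identity above into Proposition \ref{prop:LapWexpression} gives
\begin{equation*}
\Delta_{\W}\,u = \tr(\Hess_{\W}u) + \langle \nabla u, \Div(\W)\rangle = e^{f}\,\Delta u + \langle \nabla u, \nabla(e^{f})\rangle.
\end{equation*}
Finally, using $\nabla(e^{f}) = e^{f}\,\nabla f$, I would rewrite the last term as $e^{f}\langle \nabla u, \nabla f\rangle$, factor out $e^{f}$, and recognize the bracket $\Delta u + \langle \nabla u, \nabla f\rangle$ as $\Delta^{f}u$ by Definition \ref{def:WeightedMnf}, concluding $\Delta_{\W}\,u = e^{f}\,\Delta^{f}u$.

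There is no real obstacle; the only place requiring a moment's care is the cancellation of the connection terms in $\Div(e^{f}\cdot\Id)$. Alternatively one may bypass Proposition \ref{prop:LapWexpression} entirely and compute directly $\Delta_{\W}u = \Div(e^{f}\nabla u) = e^{f}\Div(\nabla u) + \langle\nabla(e^{f}),\nabla u\rangle = e^{f}\Delta u + e^{f}\langle\nabla f,\nabla u\rangle$ via the standard product rule for the divergence of a scalar times a vector field, reaching the same conclusion.
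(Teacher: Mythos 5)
Your proposal is correct and matches the paper's (implicit) route: the paper states this corollary without proof as an immediate specialization of Proposition \ref{prop:LapWexpression}, and your computation of $\Div(e^{f}\cdot\Id)=\nabla(e^{f})$ together with the trace identity is exactly the verification intended. The alternative direct computation via $\Div(e^{f}\nabla u)$ you mention at the end is also fine and equivalent.
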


Given a domain (open connected set) $\Omega$ in $M$, we will say that a smooth function $u$ is $\mathcal{W}$-harmonic (resp. $\W$-subharmonic) if $\Delta_{\W} u=0$ (resp. $\Delta_{\W} u\geq 0$) on $\Omega$. In particular, by the above corollary, a function $u$ is $\W$-harmonic (resp. $\W$-subharmonic) for an isotropic conductivity $\W=e^f\cdot \Id$ if and only if it is $f$-harmonic (resp. $f$-subharmonic) in the weighted manifold $(M,g,e^f)$.\\

The $\W$-Laplacian is an elliptic operator since $\W$ is strictly positive definite. In particular, we can apply the maximum principle for $\Delta_{\W}$, see \cite{jost2013a}:

\begin{theorem}\label{Hopf}
Let $\Omega$ be a smooth domain in $M$ and let $u$ denote a $\W$-subharmonic function in $\widebar{\Omega}$. If $u$ achieves its maximum in $\Omega$, then $u$ is constant. As a consequence, if $v$ is $\W$-harmonic with $v(q)= u(q)$ for all $q \in \partial \Omega$, then
\begin{equation}
\begin{aligned}
v &\geq u \quad \textrm{everywhere in $\Omega$, and } \\
\left<\W(\nabla v), n_{\partial \Omega}\right> &\leq \left<\W(\nabla u), n_{\partial \Omega}\right>  \quad  \textrm{everywhere along $\partial \Omega$},
\end{aligned}
\end{equation}
where $n_{\partial \Omega}$ denotes the outwards pointing unit vector field along  $\partial \Omega$.
\end{theorem}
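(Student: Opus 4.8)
The plan is to recognize $\Delta_{\W}$ as a linear, second order, uniformly elliptic operator in non-divergence form with smooth coefficients and, crucially, \emph{no} zeroth order term, and then to invoke the classical strong and weak maximum principles for such operators (as in \cite{jost2013a}). First I would write $\Delta_{\W}u$ in an arbitrary coordinate chart: combining $\nabla u = g^{jk}(\partial_{k}u)\,\partial_{j}$ with the usual coordinate expression for the divergence of a vector field yields
\[
\Delta_{\W}u \;=\; \frac{1}{\sqrt{|g|}}\,\partial_{i}\!\left(\sqrt{|g|}\;\W^{i}_{j}\,g^{jk}\,\partial_{k}u\right) \;=\; A^{ik}\,\partial_{i}\partial_{k}u \;+\; b^{k}\,\partial_{k}u ,
\]
with $A^{ik}=\W^{i}_{j}g^{jk}$ and smooth functions $b^{k}$. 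Lowering the upper index of $\W$ with the metric gives the bilinear form $\W_{\ell j}=g_{\ell m}\W^{m}_{j}=\langle\W(\partial_{j}),\partial_{\ell}\rangle$, whose symmetry is exactly the self adjointness condition in \eqref{eqWconditions}; hence $A^{ik}=g^{i\ell}\W_{\ell j}g^{jk}$ is symmetric in $i,k$, and it is positive definite since $\langle\W(\eta),\eta\rangle>0$ for $\eta\neq0$ by \eqref{eq:Wspectrum}. On any compact subset of $\Omega$ the smallest eigenvalue of $(A^{ik})$ is bounded below by a positive constant (using $\mu$ from \eqref{eq:Wspectrum} together with continuity), so $\Delta_{\W}$ is locally uniformly elliptic with locally bounded coefficients and no term of order zero.

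For the first assertion I would then apply the classical strong maximum principle of Hopf directly to the $\W$-subsolution $u$: on the connected open set $\Omega$ the set $\{\,p\in\Omega : u(p)=\sup_{\Omega}u\,\}$ is closed by continuity and open by Hopf's interior estimate, hence is either empty or all of $\Omega$; thus if the supremum is attained at an interior point, $u$ must be constant. Since no zeroth order term is present, no sign hypothesis on $u$ beyond attainment of the maximum is needed.

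For the corollary, set $w=u-v$. Linearity of $\Delta_{\W}$ gives $\Delta_{\W}w=\Delta_{\W}u-\Delta_{\W}v\ge0$ in $\Omega$, and $w=0$ on $\partial\Omega$; the weak maximum principle on the compact set $\overline{\Omega}$ then yields $\max_{\overline{\Omega}}w=\max_{\partial\Omega}w=0$, i.e.\ $w\le0$, which is the asserted inequality $v\ge u$. For the boundary (flux) inequality, fix $q\in\partial\Omega$. Because $w$ vanishes identically along $\partial\Omega$, the part of $\nabla w(q)$ tangent to $\partial\Omega$ vanishes, so $\nabla w(q)=c\,n_{\partial\Omega}(q)$ with $c=\langle\nabla w(q),n_{\partial\Omega}(q)\rangle$. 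Moving from $q$ a short distance into $\Omega$ along $-n_{\partial\Omega}$ one has $w\le0=w(q)$, and differentiating this one sided inequality at $q$ gives $c\ge0$. Consequently $\langle\W(\nabla w),n_{\partial\Omega}\rangle=c\,\langle\W(n_{\partial\Omega}),n_{\partial\Omega}\rangle\ge0$ by positive definiteness of $\W$, and rearranging this gives $\langle\W(\nabla v),n_{\partial\Omega}\rangle\le\langle\W(\nabla u),n_{\partial\Omega}\rangle$ along $\partial\Omega$.

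I expect the only genuinely delicate point to be this last one: a priori the anisotropy of $\W$ could tilt the inward gradient of $w$ away from the normal direction at $\partial\Omega$, which would obstruct the passage from $\langle\nabla w,n_{\partial\Omega}\rangle\ge0$ to $\langle\W(\nabla w),n_{\partial\Omega}\rangle\ge0$. This is precisely ruled out because $w\equiv0$ on $\partial\Omega$ forces $\nabla w$ to be purely normal there, after which positive definiteness of $\W$ closes the argument. Everything else is the routine translation of the statement into the standard elliptic framework, together with the bookkeeping carried out above certifying that $\Delta_{\W}$ has a symmetric positive definite principal part and no zeroth order term.
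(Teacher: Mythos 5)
Your proof is correct and follows the same route the paper implicitly takes: the paper states this result without proof, merely observing that $\Delta_{\W}$ is elliptic and citing the classical maximum principle, and your coordinate computation confirming a symmetric positive definite principal part with no zeroth order term is precisely the justification for that citation. The one genuinely non-routine point -- that $\nabla(u-v)$ is purely normal along $\partial\Omega$ because $u-v$ vanishes there, which is what lets the sign of $\langle\nabla(u-v),n_{\partial\Omega}\rangle$ survive the anisotropy of $\W$ -- is exactly the detail the paper leaves to the reader, and you handle it correctly.
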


From the maximum principle it is clear that any $\W$-subharmonic function on a compact manifold $M$ must be constant. In general, as already mentioned in Section \ref{sec:Glimpses} we have:
{
\begin{definition} \label{defTypeRep}
A conductive Riemannian manifold $M$ is $\W$-parabolic if any $\W$-subharmonic function bounded from above must be constant. Otherwise, we say that $M$ is $\W$-hyperbolic.    
\end{definition}
\begin{remark} Notice that the problem of determining whether a conductive Riemannian manifold $M$ is $\W$-parabolic or $\W$-hyperbolic for an isotropic conductivity $\W=e^f\cdot \Id$ is the same as determining whether $M$ is weighted parabolic or weighted hyperbolic for the density $e^f$.
\end{remark}
}
As in the classical setting, the $\W$-parabolicity of manifolds can be characterized by means of $\W$-capacities (see \cite{Littman}). For more details on the classical versions of the definitions and results below, we refer to \cite{Grigoryan1999BAMS,grigoryan2013a} for homogeneous conductivities $\W = Id$   and to \cite{hpr2020a, hurtado2020a}  for weighted ``conductivities'' $\W = e^{f}Id$. 

Let $(M,g,\W)$ be a conductive Riemannian manifold. Given an open set $\Omega\subset M$ and a compact set $K\subset\Omega$, the $\W$-capacity of the capacitor $(K,\Omega)$ is the non-negative number given by
\begin{equation}
\label{eq:capacity}
\WCap(K,\Omega):=\inf\left\{\int_\Omega\left<\W(\nabla \phi), \nabla \phi \right>\,dV\,;\,\phi\in \mathcal{L}(K, \Omega) \right\},
\end{equation}
when $\mathcal{L}(K,\Omega)$ is the set of functions on $M$ with compact support on $\widebar{\Omega}$ such that $0\leq\phi\leq 1$ and $\phi=1$ on $K$. For a precompact open set $D$ with $\widebar{D}\subset \Omega$, we denote $\WCap(D,\Omega):= \WCap(\widebar{D},\Omega)$. \\

When $\Omega$ is a smooth precompact open set and $K$ has smooth boundary, it is known that the infimum in \eqref{eq:capacity} is attained by the unique solution to the following Dirichlet problem for the  $\W$-Laplace equation
\begin{equation}
\label{eqWLap}
\begin{cases}
\Delta_\W\,u=0\,\,\,&\text{in\, $\Omega - K$},\\
\phantom{\Delta_\W }u=1\,\,\,&\text{in\, $\partial K$}, \\
\phantom{\Delta_\W }u=0\,\,\,&\text{in\, $\partial \Omega$},
\end{cases}
\end{equation}
and it can be proved (see \cite{Littman}) that

\begin{equation}
\WCap(K,\Omega) = \int_{\Omega- K}\left<\W(\nabla u), \nabla u \right>\,dV=\int_{\partial K}  \left<\W(\nabla u)\, ,\,\, n_{\partial K}\right>  \, d\mu_{\partial K},
\end{equation}
where $u$ is the solution to \eqref{eqWLap} and $n_{\partial K}$ is the outer unit normal along $\partial(\Omega- K)$, i.e., the unit normal along $\partial K$ pointing into $K$.\\

The $\W$-capacity of the whole manifold $M$ can be obtained  by considering any exhaustion
sequence $\{\Omega_i\}_{i=1}^\infty$ covering $M$ from precompact open sets such that $K\subset \Omega_i \subset \widebar{\Omega}_i \subset \Omega_{i+1}$ for all $i\in \mathbb{N}$. Then,
\begin{equation*}
\WCap(K)=\lim_{i\to\infty}\WCap (K,\Omega_i).
\end{equation*}

\begin{remark} For an isotropic conductivity $\W= e^f\cdot \Id$, the $\W$-harmonic function $u$ the solution to \eqref{eqWLap} in $\Omega- K$ is the same as the solution to the corresponding problem for the weighted Laplacian $\Delta^f$ in the weighted manifold $(M,g, e^f)$ (since $\Delta_\W\,u= e^f\, \Delta^f\, u$). As a consequence we therefore have
\begin{equation*}
\C_{\W}(K,\Omega)=\C^f (K,\Omega)=\int_{\partial K} \Vert\nabla u\Vert  \,e^f\, d\mu_{\partial _{K}}. \\
\end{equation*}
\end{remark}

\subsection{Comparison with the classical capacity}\label{sec:com:cap}
%

{Let $u$ be the solution to the classical Laplace problem, namely when we consider  $\W=Id$ in \eqref{eqWLap}, and $\phi$ the solution to \eqref{eqWLap} for a general $\W$}. Then
$$
\begin{aligned}
{\rm Cap}_\mathcal{W}(K,\Omega)\leq&\int_{\Omega- K}\langle \mathcal{W}(\nabla
u),\nabla u\rangle dV\leq \int_{\Omega- K}\lambda^*(p)\Vert\nabla u\Vert^2 dV\\
\leq& \max_{p\in \overline{\Omega}}\lambda^*(p){\rm Cap}(K,\Omega),
\end{aligned}
$$
where $\lambda^* (p)$ is the highest eigenvalue of $\mathcal{W}$ at $p$. Similarly, 

$$
\begin{aligned}
{\rm Cap}_\mathcal{W}(K,\Omega)=&\int_{\Omega- K}\langle \mathcal{W}(\nabla 
\phi),\nabla \phi\rangle dV\geq \int_{\Omega- K}\lambda_*(p)\Vert\nabla \phi\Vert^2 dV\\
\geq& \min_{p\in \overline{\Omega}}\lambda_*(p){\rm Cap}(K,\Omega),
\end{aligned}
$$
where here $\lambda_* (p)$ is the lowest eigenvalue of $\mathcal{W}$ at $p$ and ${\rm Cap}(K,\Omega)$ is the classical Riemannian capacity, defined as in \eqref{eq:capacity} with $\W={\rm Id}$. Thence,
$$
\min_{p\in \overline{\Omega}}\lambda_*(p)\leq \frac{{\rm Cap}_\mathcal{W}(K,\Omega)}{{\rm Cap}(K,\Omega)}\leq \max_{p\in \overline{\Omega}}\lambda^*(p).
$$

{
\begin{remark}\label{boundedeigen}
    Observe that by the above inequality, and taking $\Omega=M$, if $\W$ has bounded eigenvalues, then the conductive Riemannian manifold $(M,g, \W)$ is $\W$-parabolic if and only if $(M,g)$ is parabolic in the Riemannian sense.
\end{remark}
}

Since
$$
\begin{aligned}
&\liminf_{R\to 0}\min_{p\in B_R(o)}\lambda_*(p)=\limsup_{R\to 0}\min_{p\in B_R(o)}\lambda_*(p)=\lim_{R\to 0}\min_{p\in B_R(o)}\lambda_*(p)=\lambda_*(o),\\
&\liminf_{R\to 0}\min_{p\in B_R(o)}\lambda^*(p)=\limsup_{R\to 0}\min_{p\in B_R(o)}\lambda^*(p)=\lim_{R\to 0}\min_{p\in B_R(o)}\lambda^*(p)=\lambda^*(o),
\end{aligned}
$$
where $B_t (o)$ denotes the open metric ball of radius $t$ and center at the point $o\in M$, we can state the following 
\begin{proposition}
Let $(M,g,\mathcal{W})$ be a conductive Riemannian manifold. Let $o$ be any point in $M$. Then
$$
\begin{aligned}
\lambda_*(o)\leq \liminf_{R\to 0}\frac{{\rm Cap}_\mathcal{W}\left(B_{\frac{R}{2}}(o),B_R(o)\right)}{{\rm Cap}\left(B_{\frac{R}{2}}(o),B_R(o)\right)}\leq\lambda^*(o) ,
\end{aligned}
$$
and
$$
\begin{aligned}
\lambda_*(o)\leq \limsup_{R\to 0}\frac{{\rm Cap}_\mathcal{W}\left(B_{\frac{R}{2}}(o),B_R(o)\right)}{{\rm Cap}\left(B_{\frac{R}{2}}(o),B_R(o)\right)}\leq\lambda^*(o) 
\end{aligned}.
$$

\end{proposition}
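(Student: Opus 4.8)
The plan is to combine the two-sided comparison inequality established just above (which gives, for any capacitor $(K,\Omega)$, the bound $\min_{p\in\bar\Omega}\lambda_*(p)\leq {\rm Cap}_{\W}(K,\Omega)/{\rm Cap}(K,\Omega)\leq\max_{p\in\bar\Omega}\lambda^*(p)$) with the continuity of the eigenvalue functions $\lambda_*$ and $\lambda^*$ at the point $o$. First I would specialize the comparison inequality to the concentric annular capacitor $\Omega=B_R(o)$, $K=\overline{B_{R/2}(o)}$, noting that the classical capacity ${\rm Cap}(B_{R/2}(o),B_R(o))$ is strictly positive for every sufficiently small $R>0$ (so the ratio is well-defined), since small concentric geodesic balls are genuine capacitors. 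This yields, for all small $R$,
\begin{equation*}
\min_{p\in \overline{B_R(o)}}\lambda_*(p)\ \leq\ \frac{{\rm Cap}_\mathcal{W}\left(B_{R/2}(o),B_R(o)\right)}{{\rm Cap}\left(B_{R/2}(o),B_R(o)\right)}\ \leq\ \max_{p\in \overline{B_R(o)}}\lambda^*(p).
\end{equation*}

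Next I would take $\liminf_{R\to 0}$ and $\limsup_{R\to 0}$ of the middle ratio and invoke the four limit identities displayed immediately before the statement: as $R\to 0$ one has $\min_{p\in B_R(o)}\lambda_*(p)\to\lambda_*(o)$ and $\max_{p\in \overline{B_R(o)}}\lambda^*(p)\to\lambda^*(o)$ (the latter by the same argument as in the excerpt applied to $\max$ and $\lambda^*$, using smoothness hence continuity of the eigenvalue functions — for the smallest and largest eigenvalue these are continuous even though individual eigenvalues need not be smooth). Since the lower bound $\min_{p\in \overline{B_R(o)}}\lambda_*(p)$ is a fixed lower bound for the ratio for every small $R$, its limit $\lambda_*(o)$ is a lower bound for both the $\liminf$ and the $\limsup$ of the ratio; symmetrically $\lambda^*(o)$ is an upper bound for both. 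This gives exactly the two chains of inequalities asserted in the Proposition.

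The only genuinely delicate point is the claim that $\max_{p\in\overline{B_R(o)}}\lambda^*(p)\to\lambda^*(o)$ and $\min_{p\in\overline{B_R(o)}}\lambda_*(p)\to\lambda_*(o)$ as $R\to 0$; I would justify this by observing that $\lambda_*=\min_i\lambda_i$ and $\lambda^*=\max_i\lambda_i$ are continuous functions on $M$ (being, respectively, the smallest and largest root of the characteristic polynomial of $\W$, whose coefficients are smooth, and min/max of continuous functions is continuous), so on the shrinking compact sets $\overline{B_R(o)}$ the extrema converge to the value at the limit point $o$. A minor secondary point — already implicitly used in the excerpt — is that $\overline{B_R(o)}$ is compact for small $R$ (true since $M$ is complete), so the $\min$ and $\max$ are attained and the comparison inequality applies verbatim with $\overline\Omega=\overline{B_R(o)}$. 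I expect no other obstacle: everything else is a direct quotation of the displayed comparison estimate and the displayed limit formulas.
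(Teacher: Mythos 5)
Your proposal is correct and follows essentially the same route as the paper, which simply combines the two-sided comparison $\min_{p\in\overline{\Omega}}\lambda_*(p)\leq {\rm Cap}_\W(K,\Omega)/{\rm Cap}(K,\Omega)\leq\max_{p\in\overline{\Omega}}\lambda^*(p)$ with the displayed limit identities for the extremal eigenvalues as $R\to 0$. Your added remarks (positivity of the classical capacity of small concentric balls, compactness of $\overline{B_R(o)}$, continuity of $\lambda_*$ and $\lambda^*$, and the observation that the paper's displayed formula for $\lambda^*(o)$ should read $\max$ rather than $\min$) are all correct and only make explicit what the paper leaves implicit.
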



\section{The capacity conditions for the $\W$ type problem} \label{sec:CapCondit}
The relation between $\W$-capacities and the $\W$-parabolicity (resp. $\W$-hyperboli\-city) of a conductive Riemannian manifold is shown in the next result. See also \cite{Gri3} and \cite{Grigoryan1999BAMS} for the classical background.

\begin{theorem}
\label{theorGrig}
Let $(M,g,\W)$ be a conductive Riemannian manifold. Then, $M$ is
$\W$-parabolic $($resp. $\W$-hyperbolic$)$ if and only if $M$ has null $($resp. positive$)$ $\W$-capacity, i.e., there exists a precompact open set $D\subseteq M$ such that $\WCap(D)=0$ $($resp. $\WCap(D) >0$$)$.
\end{theorem}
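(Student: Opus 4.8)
The plan is to prove the statement through the \emph{capacitary potential} of a capacitor, establishing
$$
M \text{ is } \W\text{-parabolic} \iff \WCap(D)=0 \text{ for some precompact open } D \iff \WCap(D)=0 \text{ for every precompact open } D ,
$$
after which the hyperbolic assertion is merely the complementary case, since $\WCap\geq 0$ always. First I would reduce to $D$ with smooth boundary (replacing $D$ by a slightly larger, resp.\ slightly smaller, smooth precompact open set and using that $\WCap(\cdot,\Omega_i)$ is monotone in its compact argument). For such a $D$ fix an exhaustion $\{\Omega_i\}$ with $\bar D\subset\Omega_i$, let $u_i$ be the solution of \eqref{eqWLap} with $K=\bar D$, and deduce from Theorem \ref{Hopf} that $0\leq u_i\leq u_{i+1}\leq 1$; hence $u_i\nearrow u$ pointwise on $M-D$, and interior (and, near $\partial D$, boundary) elliptic estimates promote this to $C^1_{\mathrm{loc}}$ convergence, so $u$ is $\W$-harmonic on $M-\bar D$ with $u=1$ on $\partial D$ and $0\leq u\leq 1$. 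Lower semicontinuity of the $\W$-energy against the non-increasing sequence $\WCap(\bar D,\Omega_i)$ then gives $\int_{M-\bar D}\langle\W(\nabla u),\nabla u\rangle\,dV\leq\WCap(D)$, while conversely $u\equiv 1$ forces $\nabla u_i\to 0$ near $\partial D$ and hence $\WCap(D)=\lim_i\int_{\partial D}\langle\W(\nabla u_i),n\rangle\,d\mu=0$. Since $\W$ is positive definite and $M$ is connected, this yields the dichotomy $\WCap(D)=0 \iff u\equiv 1$ on $M-\bar D$.

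Assume now that $M$ is $\W$-parabolic and let $D$ be smooth precompact open. Extend the capacitary potential to $M$ by $\hat v:=1-u$ on $M-D$ and $\hat v:=0$ on $\bar D$. Then $0\leq\hat v\leq 1$, and $\hat v$ is $\W$-subharmonic on $M$: it is $\W$-harmonic on $M-\bar D$ and on $\mathrm{int}(D)$, and the crease along $\partial D$ is ``convex'' because $u\leq 1=u|_{\partial D}$ forces the normal derivative of $u$ pointing out of $D$ to be non-positive, so $\partial D$ consists of local minima of $\hat v$ — which one makes rigorous by an arbitrarily small smoothing of the crease, or by reading $\W$-subharmonicity weakly. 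Parabolicity forces $\hat v$ to be constant, hence $\hat v\equiv 0$ and $u\equiv 1$, i.e.\ $\WCap(D)=0$ by the dichotomy; monotonicity then gives $\WCap(D')=0$ for every precompact open $D'$.

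Conversely, suppose $\WCap(D_0)=0$ for some precompact open $D_0$, which we may take smooth and nonempty; then by the dichotomy its approximating potentials satisfy $u_{0,i}\nearrow 1$. Let $v$ be any $\W$-subharmonic function with $\sup_M v<\infty$; after subtracting a constant assume $v\leq 0$, and set $b:=\sup_{\bar D_0}v\leq 0$. On each annulus $\Omega_i-\bar D_0$ the function $v-b\,u_{0,i}$ is $\W$-subharmonic and $\leq 0$ on the whole boundary (it equals $v-b\leq 0$ on $\partial D_0$ and $v\leq 0$ on $\partial\Omega_i$), so the maximum principle gives $v\leq b\,u_{0,i}$ there; letting $i\to\infty$ with $u_{0,i}\nearrow 1$ and $b\leq 0$ yields $v\leq b$ on all of $M$. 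Thus $\sup_M v=b$ is attained at some $p\in\bar D_0$, an interior point of the boundaryless $M$, so by the strong maximum principle (Theorem \ref{Hopf}) $v\equiv v(p)$ on the connected manifold $M$. Hence $M$ is $\W$-parabolic.

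Combining the two implications gives the three-fold equivalence displayed above; negating it and using $\WCap\geq 0$ shows that $M$ is $\W$-hyperbolic (i.e.\ not $\W$-parabolic) if and only if $\WCap(D)>0$ for some — equivalently every — precompact open $D$. The main obstacle is the construction and limiting analysis of the capacitary potential: the monotone convergence $u_i\nearrow u$, the energy/flux identification $\WCap(\bar D,\Omega_i)=\int_{\Omega_i-\bar D}\langle\W(\nabla u_i),\nabla u_i\rangle\,dV=\int_{\partial D}\langle\W(\nabla u_i),n\rangle\,d\mu$, and the resulting dichotomy $\WCap(D)=0\iff u\equiv 1$. This is standard for divergence-form second order elliptic operators (cf.\ \cite{Littman,Grigoryan1999BAMS,grigoryan2013a}) but has to be transcribed with some care to the anisotropic $\W$-setting; once it is in place, the two maximum-principle arguments are routine.
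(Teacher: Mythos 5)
The paper does not actually prove Theorem \ref{theorGrig}: it states the result and delegates the proof to the references (\cite{Littman} for the $\W$-Laplacian and \cite{Gri3}, \cite{Grigoryan1999BAMS} for the classical background), so there is no ``paper proof'' to compare against line by line. Your proposal reconstructs the standard Grigoryan-style argument and transcribes it correctly to the anisotropic setting: the construction of the capacitary potential $u=\lim_i u_i$, the dichotomy $\WCap(D)=0\iff u\equiv 1$, the comparison $v\le b\,u_{0,i}$ on annuli for the implication ``zero capacity $\Rightarrow$ parabolic'', and the extension $\hat v=1-u$ for the converse are all exactly the steps the cited sources carry out for $\Delta$ and $\Delta^{f}$, and every ingredient you use (monotonicity of $\WCap$ in both arguments, the comparison principle of Theorem \ref{Hopf}, ellipticity and positive definiteness of $\W$) is available here. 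The one place where you should be more explicit is the subharmonicity of $\hat v$: the paper defines $\W$-subharmonicity, and hence $\W$-parabolicity, only for \emph{smooth} functions, and your $\hat v$ has a Lipschitz crease along $\partial D$. ``An arbitrarily small smoothing of the crease'' is not automatic, but the standard fix works verbatim for $\Delta_\W$: for a smooth convex nondecreasing $\phi$ with $\phi\equiv 0$ on $(-\infty,\epsilon]$ one has $\Delta_\W(\phi\circ w)=\phi'(w)\,\Delta_\W w+\phi''(w)\,\langle\W(\nabla w),\nabla w\rangle\ge 0$ for $w=1-u$ $\W$-harmonic, and $\phi(1-u)$ extends by zero to a smooth, bounded, nonconstant $\W$-subharmonic function on $M$ whenever $u\not\equiv 1$. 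With that substitution your argument is complete.
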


In view of Theorem \ref{theorGrig}, in order to determine if a conductive Riemannian manifold is $\W$-parabolic or $\W$-hyperbolic it therefore suffices to find bounds on $\WCap(D,\Omega_i)$ for some precompact open set $D \subset M$ and some exhaustion $\{\Omega_i\}_{i=1}^\infty$. We establish such bounds via comparison with suitable model manifolds with radial isotropic conductivity. 
\subsection{Model manifolds}

Here we introduce the model spaces that we will use to establish our comparison
theorems. They will be weighted warped products with radial weights $e^{h(r)}$, where $r$ denotes the distance to the well defined center of the model. We then utilize the fact that the $\W$-Laplacian with $\W$ isotropic is up to a positive function the weighted Laplacian. 

\begin{definition}(see {\cite[Ch.~2]{GreeneWu}, \cite[Sect.~3]{Grigoryan1999BAMS}, \cite[Ch.~3]{Petersen2016}}).
\label{model}
A model space is a smooth warped product $(M^n_w,g_w):=B^1\times_w F^{n-1}$ with base $B^{1}:=[0,\Lambda[\,\subset\mathbb{R}$ (where $0 < \Lambda \leq \infty$), fiber $F^{n-1}:=\mathbb{S}^{n-1}_{1}$ (the unit $(n-1)$-sphere with standard metric), and warping function $w:[0,\Lambda[\,\,\to [0,\infty[$ such that $w(r)>0$ for all $0<r\leq \Lambda$, whereas $w(0) = 0$, $w'(0) = 1$, and $w^{(k)}(0) = 0$ for all even derivation orders. The point $o_w:=\pi^{-1}(0)$, where $\pi$ denotes the projection onto $B^1$, is called the {\em{center point}} of the
model space. If $\Lambda = \infty$, then $o_w$ is a pole of the manifold.
\end{definition}

\begin{remark}
The strong analytical conditions for $w$ at $0$ ensure that the warped metric is $C^\infty$ at the pole. However, for most of our comparison results, it suffices to require $w(0)=0$, $w'(0)=1$, and $w(s)>0$ for $s>0$. In this case, we get a model space with less regularity at the pole that we still denote $(M^n_w,g_w)$. As illustrated by the warped product example in Figure \ref{fig:Cyl3} in Section \ref{sec:Glimpses} -- the conditions on the warping function as a function on the distance $r$ from the pole can be even further circumvented while upholding the other necessary conditions (on the level of the Hessian of  more general distance functions) for solving the type problem. The generality of this possibility is discussed in Section \ref{sec:Intermezzo} below. 
\end{remark}

\begin{example}
\label{propSpaceForm}
The simply connected space forms $\mathbb{K}^n(b)$ of constant sectional curvature $b$
can be constructed as $w$-models with any given point as center
point using the warping functions
\begin{equation*}
w_{b}(r):=
\begin{cases} 
\frac{1}{\sqrt{b}}\sin(\sqrt{b}\, r) &\text{if $b>0$, and $r\leq \pi/\sqrt{b}$}
\\
\phantom{\frac{1}{\sqrt{b}}} r &\text{if $b=0$}
\\
\frac{1}{\sqrt{-b}}\sinh(\sqrt{-b}\,r) &\text{if $b<0$}.
\end{cases}
\end{equation*}
Note that, for $b > 0$, the function $w_{b}(r)$ admits a smooth
extension to  $r = \pi/\sqrt{b}$. For $\, b \leq 0\,$ any center
point is a pole.
\end{example}

A weighted model space is a triple $(M^n_w, g_w, e^{h(r)})$ where $e^{h(r)}$ is a radial weight in the model ($M^n_w,g_w)$. In this situation, the weighted volumes of the open metric ball $B^w_R$ of radius $R>0$ centered at $o_w$, and of the sphere $\partial B^w_R$ are computed as follows
\begin{equation*}
\begin{split}
\Vol_h(B^w_R)&=V_0\,\int_0^R w^{n-1}(t)\,e^{h(t)}\,dt,
\\
\Vol_h(\partial B^w_R)&=V_0\,w^{n-1}(R)\,e^{h(R)},
\end{split}
\end{equation*}
where $V_0$ is the Riemannian volume of the unit sphere $\mathbb{S}^{n-1}_1$. The
sectional curvatures of $M^n_w$ for planes containing the radial direction vector, i.e. the gradient $\nabla r$, are determined by the radial function $-w''(r)/w(r)$, and the mean curvature of $\partial B^w_r$ is $\eta_{w}(r)=w'(r)/w(r)$.\\

The weighted capacity $\C^h(B^w_\rho,B^w_R)$ for any two radii $\rho,\,R \in \mathbb{R}$ with $\rho< R$ is computed in \cite{hurtado2020a}. It is determined from the solution to the following weighted Dirichlet problem

\begin{equation}
\label{eqfDirModel2}
\begin{cases}
\Delta_{M_w}^h u = 0\,\,\,&\text{in\, $B^w_R- \widebar{B}^w_\rho$},\\
\phantom{\Delta }u = 1\,\,\,&\text{in\, $\partial B^w_\rho$}, \\
\phantom{\Delta }u = 0\,\,\,&\text{in\, $\partial B^w_R$}.
\end{cases}
\end{equation}

For later use we must mention that a radial function $\phi(r)$ defined on the metric annulus 
 $B^w_R- \widebar{B}^w_\rho$ satisfies the first equation in \eqref{eqfDirModel2} if and only if
\begin{equation}
\label{eq:sabika}
\phi''(r)+\phi'(r)\,\bigg((n-1)\,\frac{w'(r)}{w(r)}+h'(r)\bigg)=0.
\end{equation}

\begin{proposition}[\cite{hurtado2020a}]
\label{capacityweighted} 
In the weighted model space $(M^n_w,g_w,e^{h(r)})$ the solution  to the weighted Dirichlet problem \eqref{eqfDirModel2} is given by the radial function:
\begin{equation}
\label{fsolmodel}
\phi_{\rho,R,h,n}(r):=\left(\int_r^R w^{1-n}
(s)\,e^{-h(s)}\,ds\right)\,\left(\int_\rho^R w^{1-n}
(s)\,e^{-h(s)}\,ds\right)^{-1}.
\end{equation}
Therefore
\begin{equation}
    \phi_{\rho,R,h,n}'(\rho) = - w^{1-n}(\rho)\cdot e^{-h(\rho)} \,\left(\int_\rho^R w^{1-n}(s)\,e^{-h(s)}\,ds\right)^{-1},
\end{equation}

and thence, 
\begin{equation}
\begin{split}
\label{fcapacitymodel}
{\C}^h(B^w_\rho,B^w_R)&=
-\phi_{\rho,R,h,n}'(\rho) \cdot e^{h(\rho)}\cdot w^{n-1}(\rho) \cdot V_{0}\\
&=V_0\,\left(\int_\rho^R w^{1-n}(s)\,e^{-h(s)}\,ds\right)^{-1}.
\end{split}
\end{equation}
\end{proposition}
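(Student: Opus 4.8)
The statement to prove is Proposition \ref{capacityweighted}, which computes the weighted capacity $\C^h(B^w_\rho,B^w_R)$ in a weighted model space. The strategy is entirely classical: reduce the Dirichlet problem \eqref{eqfDirModel2} to an ODE using the rotational symmetry of the model, solve that ODE explicitly, and then evaluate the capacity as a boundary flux integral.

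First I would argue that the solution to \eqref{eqfDirModel2} must be radial. Since the weighted model $(M^n_w, g_w, e^{h(r)})$ is invariant under the isometry group $O(n-1)$ acting on the fiber $\mathbb{S}^{n-1}_1$ (which fixes the weight because $h$ depends only on $r$), this group maps solutions of the weighted Dirichlet problem to solutions with the same boundary data; by uniqueness of the solution (the $\W$-Laplacian, here $\W = e^h\cdot\Id$, is elliptic, so the maximum principle Theorem \ref{Hopf} gives uniqueness), the solution is invariant, hence radial, $u = \phi(r)$. Next, I would invoke the characterization already recorded in \eqref{eq:sabika}: a radial $\phi(r)$ on the annulus satisfies $\Delta^h_{M_w}\phi = 0$ if and only if
\begin{equation*}
\phi''(r) + \phi'(r)\left((n-1)\frac{w'(r)}{w(r)} + h'(r)\right) = 0.
\end{equation*}
This is a first-order linear ODE in $\phi'$, with integrating factor $w^{n-1}(r)e^{h(r)}$, so $\phi'(r) = C\, w^{1-n}(r)e^{-h(r)}$ for a constant $C$. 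Integrating once more and imposing the boundary conditions $\phi(\rho) = 1$, $\phi(R) = 0$ pins down the constant and yields exactly the formula \eqref{fsolmodel} for $\phi_{\rho,R,h,n}$. Differentiating \eqref{fsolmodel} and evaluating at $r=\rho$ gives the stated expression for $\phi_{\rho,R,h,n}'(\rho)$.

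Finally, I would compute the capacity as the weighted boundary flux. By the remark following \eqref{eqWLap} (the isotropic case: $\C_\W(K,\Omega) = \C^f(K,\Omega) = \int_{\partial K}\|\nabla u\|\,e^f\,d\mu_{\partial K}$), we have
\begin{equation*}
\C^h(B^w_\rho, B^w_R) = \int_{\partial B^w_\rho} |\phi_{\rho,R,h,n}'(\rho)|\, e^{h(\rho)}\, d\mu_{\partial B^w_\rho} = -\phi_{\rho,R,h,n}'(\rho)\cdot e^{h(\rho)}\cdot \Vol_h(\partial B^w_\rho)/e^{h(\rho)}\cdot e^{h(\rho)},
\end{equation*}
or more cleanly, using $\Vol(\partial B^w_\rho) = V_0\, w^{n-1}(\rho)$ for the Riemannian area of the geodesic sphere and inserting the weight factor $e^{h(\rho)}$ once: $\C^h = -\phi'_{\rho,R,h,n}(\rho)\cdot e^{h(\rho)}\cdot w^{n-1}(\rho)\cdot V_0$. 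Substituting the formula for $\phi'_{\rho,R,h,n}(\rho)$ makes the $w^{n-1}(\rho)e^{h(\rho)}$ factors cancel, leaving $\C^h(B^w_\rho,B^w_R) = V_0\left(\int_\rho^R w^{1-n}(s)e^{-h(s)}\,ds\right)^{-1}$, as claimed.

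\textbf{Main obstacle.} The only genuinely delicate point is justifying that the infimum in the capacity \eqref{eq:capacity} is attained and equals the boundary-flux integral of the harmonic solution — i.e. that one may legitimately replace the variational definition by the Dirichlet-problem solution and read off the capacity as a flux. On the model space with $\rho > 0$ and $R < \Lambda$ the annulus is smooth and precompact, so this is standard (it is the content of the cited \cite{Littman} result quoted just after \eqref{eqWLap}), but one should check that the radial ansatz is consistent with the regularity required there; everything else is routine ODE integration and bookkeeping of the geodesic-sphere volume element, which I would not spell out in detail.
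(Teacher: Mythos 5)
Your proof is correct and follows exactly the route the paper intends: the proposition is stated with a citation to \cite{hurtado2020a} rather than proved in the text, and your argument (symmetry reduction to the radial ODE \eqref{eq:sabika}, integration via the factor $w^{n-1}(r)e^{h(r)}$ to get $\phi'(r)=C\,w^{1-n}(r)e^{-h(r)}$, and evaluation of the capacity as the weighted flux through $\partial B^w_\rho$ with $\Vol(\partial B^w_\rho)=V_0\,w^{n-1}(\rho)$) is the standard derivation and is consistent with everything the paper sets up around the statement. The only blemish is the garbled intermediate display in the flux computation (an extra, miscancelled $e^{h(\rho)}$), which you immediately replace with the correct clean identity, so nothing substantive is missing.
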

\begin{remark}\label{Nomodel} If we define the function $\phi_{\rho,R,h,n}(r)$ in \eqref{fsolmodel} for $n\in \mathbb{R}_+$ (i.e. not only $n\in \mathbb{N}$), this function is the unique solution of the differential equation \eqref{eq:sabika} with boundary conditions $\phi_{\rho,R,h,n}(\rho)=1$, and $\phi_{\rho,R,h,n}(R)=0$. Moreover,
$$
-\phi_{\rho,R,h,n}'(\rho) \cdot e^{h(\rho)}\cdot w^{n-1}(\rho) 
=\left(\int_\rho^R w^{1-n}(s)\,e^{-h(s)}\,ds\right)^{-1}.
$$

In this case, we cannot interpret this expression as the weighted capacity of the capacitor $(\widebar{B}^w_\rho,B^w_R)$ in a weighted model space of dimension $n$, but we can still use the analytical expression for comparison purposes. 
\end{remark}

We now introduce the following concept, the $\W$-capacity volume of metric spheres, which we will use in the sequel.

\begin{definition} Let $(M,g,\W)$ be a conductive Riemannian manifold with a pole $o$, and let $B_R$ be the metric open ball centered at $o$. We will denote
\begin{equation}
{\rm Vol}_\W (\partial B_R)=\int_{\partial B_R} \langle \W(\nabla r), \nabla r\rangle\,d\mu_{\partial B_R}.
\end{equation}
\end{definition}

Notice that if $\W=\Id$, ${\rm Vol}_\W (\partial B_R)$ coincides with the Riemannian volume of the metric sphere and, in the isotropic case, $\W=e^f\cdot {\rm Id}$, ${\rm Vol}_\W (\partial B_R)$ is the weighted volume of $\partial B_R$ in the weighted space $(M,g,e^f)$. This definition is also used in \cite{Azami21,Azami23}.

\section{The conductive Laplacian of modified distance functions} \label{sec:BoundingLapDist}
Let $(M,g,\W)$ be a conductive Riemannian manifold such that $M$ is complete and noncompact. Denote by $r: M \to \mathbb{R}$ the distance function from a point $o\in M$. In this section, we will derive estimates away from the cut locus $\rm{cut}(o)$ of $o$ for the $\W$-Hessian and $\W$-Laplacian of modified radial functions $F\circ r$ for a smooth decreasing function $F:(0,+\infty)\to \mathbb{R}$. In order to do this, we use the following well-known result:

\begin{theorem}[\cite{GreeneWu}]\label{teo:GreeneWu} Let $(M,g)$  be a Riemannian manifold. Let $r:M\to \mathbb{R}$ the distance function from a point $o\in M$. Suppose that for any $p\in M-(\rm{cut}(o)\cup \{o\})$ and any plane $\sigma_p \subset T_p M$ containing $(\nabla r)_p$, the sectional curvatures  are bounded by
\begin{equation}\label{ineq:sec}
{\rm sec} (\sigma_p)\leq (\geq) \frac{-w''(r(p))}{w(r(p))}
\end{equation}
for a smooth function $w(s)$ such that $w(0)=0$, $w'(0)=1$ and $w(s)>0$ for all $s>0$. Then for any $p\in M-({\rm cut}(o) \cup\{o\})$ and any $v\in T_p M$
$$
{\rm Hess}\,r(v,v)\geq (\leq)\,\frac{w'(r)}{w(r)}\left(\Vert v\Vert^2-\langle \nabla r,v\rangle^2\right).
$$
 \end{theorem}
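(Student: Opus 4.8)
The plan is to establish the inequality pointwise on $M-(\operatorname{cut}(o)\cup\{o\})$, where $r$ is smooth, by the classical second-variation / Jacobi field comparison. \textbf{Reduction.} Since $\Vert\nabla r\Vert\equiv1$ off $o$ and $\operatorname{cut}(o)$, differentiating $\langle\nabla r,\nabla r\rangle=1$ gives $\Hess r(\nabla r,\cdot)=0$; hence, decomposing $v=\langle v,\nabla r\rangle\nabla r+v^{\perp}$ with $v^{\perp}\perp\nabla r$, one has $\Hess r(v,v)=\Hess r(v^{\perp},v^{\perp})$ and $\Vert v\Vert^{2}-\langle\nabla r,v\rangle^{2}=\Vert v^{\perp}\Vert^{2}$, so by bilinearity it suffices to prove $\Hess r(v,v)\ge(\le)\tfrac{w'(r)}{w(r)}$ for a \emph{unit} vector $v\perp\nabla r$ at a fixed $p\notin\operatorname{cut}(o)\cup\{o\}$. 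Put $\ell=r(p)$ and let $\gamma:[0,\ell]\to M$ be the unit-speed minimizing geodesic from $o$ to $p$; because $p\notin\operatorname{cut}(o)$, $\gamma$ has no conjugate points on $[0,\ell]$ and $r$ is smooth near $\gamma((0,\ell])$. Let $J$ be the unique Jacobi field along $\gamma$ with $J(0)=0$ and $J(\ell)=v$; it is normal to $\gamma'$, and one has the standard identities $\Hess r(v,v)=\langle J'(\ell),J(\ell)\rangle=I_{\ell}(J,J)$, where $I_{\ell}(J,J)=\int_{0}^{\ell}\big(\Vert J'\Vert^{2}-\langle R(J,\gamma')\gamma',J\rangle\big)\,dt$ is the index form (the second equality by integration by parts, using $J(0)=0$ and the Jacobi equation).

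\textbf{Lower Hessian bound (the case $\operatorname{sec}\le -w''/w$).} For this direction I would work with $f(t):=\Vert J(t)\Vert>0$ on $(0,\ell]$. Cauchy--Schwarz gives $f''\ge\langle J'',J\rangle/\Vert J\Vert$; since $J''=-R(J,\gamma')\gamma'$ and $\langle R(J,\gamma')\gamma',J\rangle=\operatorname{sec}(\operatorname{span}\{J,\gamma'\})\,\Vert J\Vert^{2}\le-\tfrac{w''}{w}f^{2}$ along $\gamma$, this yields $f''\ge\tfrac{w''}{w}f$, equivalently $(wf'-w'f)'=wf''-w''f\ge0$ on $(0,\ell]$. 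As $t\to0^{+}$ one has $f(t)\to0$, $f'(t)\to\Vert J'(0)\Vert$ (note $J'(0)\neq0$, else $J\equiv0$), $w(t)\to0$, $w'(t)\to1$, hence $wf'-w'f\to0$; therefore $wf'-w'f\ge0$, i.e. $f'/f\ge w'/w$, on $(0,\ell]$. Evaluating at $t=\ell$, where $f(\ell)=1$ and $f'(\ell)=\langle J'(\ell),v\rangle=\Hess r(v,v)$, gives $\Hess r(v,v)\ge w'(\ell)/w(\ell)$.

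\textbf{Upper Hessian bound (the case $\operatorname{sec}\ge -w''/w$).} Here I would invoke the Index Lemma: since $\gamma|_{[0,\ell]}$ has no conjugate points, $J$ minimizes $I_{\ell}$ among all piecewise-smooth normal fields $W$ with $W(0)=0$ and $W(\ell)=v$. Choosing the comparison field $W(t)=\tfrac{w(t)}{w(\ell)}E(t)$, where $E$ is the parallel unit field along $\gamma$ with $E(\ell)=v$, and using $\langle R(E,\gamma')\gamma',E\rangle=\operatorname{sec}(\operatorname{span}\{E,\gamma'\})\ge-\tfrac{w''}{w}$, one gets
\begin{equation*}
I_{\ell}(W,W)=\frac{1}{w(\ell)^{2}}\int_{0}^{\ell}\!\Big(w'(t)^{2}-w(t)^{2}\operatorname{sec}(\operatorname{span}\{E,\gamma'\})\Big)\,dt\;\le\;\frac{1}{w(\ell)^{2}}\int_{0}^{\ell}\!\big(w'(t)^{2}+w(t)w''(t)\big)\,dt=\frac{[w w']_{0}^{\ell}}{w(\ell)^{2}}=\frac{w'(\ell)}{w(\ell)},
\end{equation*}
using $w(0)=0$. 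Hence $\Hess r(v,v)=I_{\ell}(J,J)\le I_{\ell}(W,W)\le w'(\ell)/w(\ell)$, and with the reduction step the theorem follows in both cases.

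\textbf{Expected main obstacle.} None of the individual computations is hard; the real point is that the two directions genuinely require different tools. For the lower Hessian bound the Riccati-type term in $f''$ carries the favourable sign only for a \emph{lower} estimate (this is, in essence, Rauch's first comparison theorem), while the upper bound is precisely the index-minimization property of Jacobi fields. The only care needed is to stay inside $M-(\operatorname{cut}(o)\cup\{o\})$---so that $r$ is smooth along $\gamma((0,\ell])$, $\gamma$ is free of conjugate points, and both $f>0$ on $(0,\ell]$ and the Index Lemma are legitimate---together with the short boundary analysis of $wf'-w'f$ as $t\to0^{+}$.
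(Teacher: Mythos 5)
Your proof is correct and complete. The paper itself gives no proof of this statement --- it is quoted directly from Greene--Wu --- and your argument is precisely the classical one: the reduction via $\Hess r(\nabla r,\cdot)=0$, the identification $\Hess r(v,v)=\langle J'(\ell),J(\ell)\rangle=I_\ell(J,J)$, the Riccati/norm comparison $(wf'-w'f)'\ge 0$ for the lower bound, and the Index Lemma with the test field $w(t)E(t)/w(\ell)$ for the upper bound. You also correctly flag the only delicate points (absence of conjugate points so that $f=\Vert J\Vert>0$ on $(0,\ell]$ and the Index Lemma applies, and the boundary behaviour of $wf'-w'f$ at $t=0$), so there is nothing to add.
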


 \begin{proposition}\label{ineq1intrinsic} Let $(M^n,g,\W)$ be a conductive Riemannian manifold, $r:M\to \mathbb{R}$ denote the distance function from a point $o\in M$, and $w(s)$ a smooth function such that $w(0)=0$, $w'(0)=1$ and $w(s)>0$ for all $s>0$. Suppose that for any $p\in M-(\rm{cut}(o)\cup \{o\})$ and any plane $\sigma_p \subset T_p M$ containing the gradient $(\nabla r)_p$,
\begin{displaymath}
{\rm sec}(\sigma_p) \leq (\geq)\,- \frac{w''(r(p))}{w(r(p))}.
\end{displaymath}
Then, for every smooth function $F:(0,+\infty)\to \mathbb{R}$ with $F'\leq 0$ we have that:
\begin{equation} \label{ineq:WLaplacian}
\begin{aligned}
    \Delta_\mathcal{W}(F\circ r) &\leq (\geq)\left(F''(r)-F'(r)\frac{w'(r)}{w(r)}\right) \langle\mathcal{W}(\nabla r),\nabla r\rangle\\
    &+{\rm tr} (\mathcal{W})\, F'(r)\frac{w'(r)}{w(r)}
     +F'(r)\,\langle {\rm div} (\mathcal{W}),\nabla r\rangle,
\end{aligned}
\end{equation}
on $M-(\rm{cut}(o)\cup \{o\})$.
\end{proposition}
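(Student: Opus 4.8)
The plan is to compute $\Delta_{\W}(F\circ r)$ directly using the decomposition from Proposition \ref{prop:LapWexpression}, namely $\Delta_{\W}(F\circ r) = \tr(\Hess_{\W}(F\circ r)) + \langle \nabla(F\circ r), \Div(\W)\rangle$, and then to estimate each piece using the Hessian comparison from Theorem \ref{teo:GreeneWu}. First I would record the elementary radial identities: $\nabla(F\circ r) = F'(r)\,\nabla r$, and for a modified radial function the standard formula $\Hess(F\circ r)(X,Y) = F''(r)\langle \nabla r, X\rangle\langle \nabla r, Y\rangle + F'(r)\Hess\, r(X,Y)$, valid away from $\mathrm{cut}(o)\cup\{o\}$, where we also use $\Hess\, r(\nabla r, \cdot) = 0$ (since $\|\nabla r\|=1$). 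From the definition $\Hess_{\W}(F\circ r)(X,Y) = \langle \nabla_X\nabla(F\circ r), \W(Y)\rangle$, one gets $\Hess_{\W}(F\circ r)(X,Y) = \Hess(F\circ r)(X, \W(Y))$ by self-adjointness of $\nabla_X\nabla(F\circ r)$ paired against $\W(Y)$; hence $\tr(\Hess_{\W}(F\circ r)) = \sum_i \Hess(F\circ r)(E_i, \W(E_i))$ in an orthonormal frame.

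Next I would substitute the radial Hessian formula into this trace. The $F''$ term contributes $F''(r)\sum_i \langle\nabla r, E_i\rangle\langle\nabla r, \W(E_i)\rangle = F''(r)\langle \W(\nabla r),\nabla r\rangle$ after resumming. The $F'(r)\Hess\, r$ term contributes $F'(r)\sum_i \Hess\, r(E_i, \W(E_i))$. To handle this I would choose at the point $p$ an orthonormal frame $\{E_i\}$ diagonalizing $\W$, so $\W(E_i) = \lambda_i E_i$, giving $F'(r)\sum_i \lambda_i \Hess\, r(E_i, E_i)$. Now apply Theorem \ref{teo:GreeneWu}: in the upper-curvature-bound case $\Hess\, r(v,v) \geq \frac{w'}{w}(\|v\|^2 - \langle\nabla r,v\rangle^2)$, and since $F'(r)\leq 0$ and each $\lambda_i>0$, multiplying the inequality by $\lambda_i F'(r)$ reverses it, so $F'(r)\sum_i\lambda_i\Hess\, r(E_i,E_i) \leq F'(r)\frac{w'}{w}\sum_i\lambda_i(1 - \langle\nabla r, E_i\rangle^2)$. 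Resumming, $\sum_i\lambda_i = \tr(\W)$ and $\sum_i\lambda_i\langle\nabla r,E_i\rangle^2 = \langle\W(\nabla r),\nabla r\rangle$, which yields the term $F'(r)\frac{w'}{w}\big(\tr(\W) - \langle\W(\nabla r),\nabla r\rangle\big)$. Finally, the divergence term is simply $\langle F'(r)\nabla r, \Div(\W)\rangle = F'(r)\langle\Div(\W),\nabla r\rangle$. Collecting all three contributions gives exactly \eqref{ineq:WLaplacian}; the lower-curvature-bound case is verbatim the same with all inequalities reversed (again using that $\lambda_i F'(r)\leq 0$ flips the comparison direction consistently).

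The only genuinely delicate point — and the one I would be most careful about — is the sign bookkeeping: one must verify that the factor $\lambda_i F'(r)$ multiplying the Hessian comparison inequality is nonpositive \emph{uniformly} in $i$, so that the single inequality from Theorem \ref{teo:GreeneWu} (which holds for every $v$) can be summed termwise without the direction of the inequality changing from one index to another. This is where positive-definiteness of $\W$ (all $\lambda_i > 0$) and the hypothesis $F'\leq 0$ are both essential, and it is also why the statement needs $F$ decreasing rather than merely smooth. A secondary (routine) point is that all computations take place on $M - (\mathrm{cut}(o)\cup\{o\})$, where $r$ is smooth and the radial Hessian formula is valid; no issue arises there. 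I would also note in passing that the choice of a $\W$-diagonalizing frame is legitimate because the trace $\tr(\Hess_{\W}(F\circ r))$ is frame-independent, so evaluating it in any convenient orthonormal frame at $p$ gives the same value.
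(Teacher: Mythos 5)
Your proposal is correct and follows essentially the same route as the paper's proof: decompose $\Delta_{\W}(F\circ r)$ via Proposition \ref{prop:LapWexpression}, diagonalize $\W$ in an orthonormal frame at the point, apply the Greene--Wu Hessian comparison termwise, and use $F'\leq 0$ together with the positivity of the eigenvalues of $\W$ to keep all inequalities pointing the same way. The only difference is that you spell out the radial chain-rule expansion of $\Hess(F\circ r)$ explicitly, which the paper compresses into its first displayed identity.
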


\begin{proof} By Proposition \ref{prop:LapWexpression}, it is easy to see that

\begin{equation*}
\Delta_\mathcal{W}(F\circ r)=F''(r)\,\langle\mathcal{W}(\nabla r),\nabla r\rangle +F'(r)\, {\rm tr}({\rm Hess}_\W\,r)+F'(r)\,\langle {\rm div} (\mathcal{W}),\nabla r\rangle.
\end{equation*}
 
Then, to bound the Laplacian of $F\circ r$, we will need to bound
${\rm Hess}\, r(e_i,\mathcal{W}(e_i))$ for an orthonormal basis $\{ e_i\}_{i=1}^n$ of $T_p M$. We will use an orthonormal frame $\{e_i\}_{i=1}^n$ of $T_p M$ which diagonalizes $\mathcal{W}$, namely
$$
\mathcal{W}(e_i)=\lambda_i\,e_i.
$$
Applying Theorem \ref{teo:GreeneWu}, we conclude that
$$
\begin{aligned}
 \sum_{i=1}^n{\rm Hess}\, r(e_i,\mathcal{W}(e_i))=&\sum_{i=1}^n\,\lambda_i{\rm Hess}\,r(e_i,e_i)
   \geq (\leq)\sum_{i=1}^n\lambda_i{\frac{w'(r)}{w(r)}\left(1-\langle \nabla r,e_i\rangle^2\right)}\\
   =&{\rm tr}(\mathcal{W})\frac{w'(r)}{w(r)}-{\frac{w'(r)}{w(r)}}\sum_{i=1}^n\langle \nabla r,\lambda_i\,e_i\rangle\langle \nabla r,e_i\rangle\\
   =&{\rm tr} (\mathcal{W})\frac{w'(r)}{w(r)}-{\frac{w'(r)}{w(r)}}\sum_{i=1}^n\langle \nabla r,\W(e_i)\rangle\langle \nabla r,e_i\rangle\\
   =&{\rm tr} (\mathcal{W})\frac{w'(r)}{w(r)}-{\frac{w'(r)}{w(r)}}\sum_{i=1}^n\langle \W(\nabla r),e_i\rangle\langle \nabla r,e_i\rangle\\
   =&{\rm tr} (\mathcal{W})\frac{w'(r)}{w(r)}-{\frac{w'(r)}{w(r)}}\langle \W(\nabla r),\nabla r\rangle,
\end{aligned}
$$
and the result follows from $F'(r)\leq 0$.
\end{proof}

\begin{proposition}
Let $(M,g,\mathcal{W})$ be a conductive Riemannian manifold, with a pole $o \in M$ and denote $r: M \to \mathbb{R}$ the distance function with respect to $o$. Suppose that $\nabla r$ is an eigenvector of $\mathcal{W}$ in $(M,g,\W)$, that is, $\mathcal{W}(\nabla r)= e^{\lambda}\, \nabla r$ (the isotropic conductivity is a particular case). Then, 
\begin{eqnarray*}
\Delta_{\mathcal{W}}(F\circ r)&=&e^\lambda\, \Delta^\lambda (F\circ r),
\end{eqnarray*}
where $\Delta^\lambda$ is the weighted Laplacian for the density 
$e^\lambda$ on $M$. Hence, $M$ is $\mathcal{W}$-parabolic if and only if $(M,g, e^\lambda)$ is $\lambda$-parabolic in the weighted sense.\\    
\end{proposition}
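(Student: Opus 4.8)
The plan is to get the operator identity directly from the definition $\Delta_{\W}=\Div\circ\,\W\circ\nabla$, and then transfer the type statement through capacities. First, since $F$ depends on one variable, $\nabla(F\circ r)=F'(r)\,\nabla r$, so $\nabla(F\circ r)$ is pointwise a scalar multiple of $\nabla r$ and hence, by hypothesis, an eigenvector of $\W$ with eigenvalue $e^{\lambda}$:
\[
\W\big(\nabla(F\circ r)\big)=F'(r)\,\W(\nabla r)=e^{\lambda}\,F'(r)\,\nabla r=e^{\lambda}\,\nabla(F\circ r).
\]
Applying the Leibniz rule $\Div(h\,V)=\langle\nabla h,V\rangle+h\,\Div V$ with $h=e^{\lambda}F'(r)$ and $V=\nabla r$ (so $\Div V=\Delta r$), and using $\|\nabla r\|=1$ away from the cut locus, I get
\[
\Delta_{\W}(F\circ r)=\big\langle\nabla(e^{\lambda}F'(r)),\nabla r\big\rangle+e^{\lambda}F'(r)\,\Delta r=e^{\lambda}\Big(F''(r)+F'(r)\big(\Delta r+\langle\nabla\lambda,\nabla r\rangle\big)\Big),
\]
which is exactly $e^{\lambda}\,\Delta^{\lambda}(F\circ r)$ because $\Delta(F\circ r)=F''(r)+F'(r)\Delta r$ and $\langle\nabla(F\circ r),\nabla\lambda\rangle=F'(r)\langle\nabla r,\nabla\lambda\rangle$. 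One can equally unwind this from Proposition~\ref{prop:LapWexpression}, noting that the terms carrying the restriction of $\W$ to $(\nabla r)^{\perp}$ cancel between $\tr(\Hess_{\W}r)$ and $\langle\nabla r,\Div\W\rangle$; the divergence computation above is cleaner and avoids that cancellation. The only regularity care needed is that, $o$ being a pole, $\mathrm{cut}(o)=\emptyset$, so $r$ is smooth on $M\setminus\{o\}$ and everything above holds there.

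For the type statement I would argue via Theorem~\ref{theorGrig}: the type of $(M,g,\W)$ is decided by whether $\WCap(\Lambda_{\rho,\infty})$ vanishes for geodesic annuli $\Lambda_{\rho,R}=B_{R}\setminus\widebar{B}_{\rho}$ centred at $o$, and similarly for $\C^{\lambda}$ in $(M,g,e^{\lambda})$. Since $e^{\lambda}>0$, the identity shows that a radial function $F\circ r$ is $\W$-harmonic (resp. $\W$-subharmonic, $\W$-superharmonic) on such an annulus if and only if it is $\lambda$-harmonic (resp. sub-, super-) for the density $e^{\lambda}$; in particular the two Dirichlet problems have the same radial solutions. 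Moreover, because the eigenvector hypothesis makes $\W$ block-diagonal with respect to $\nabla r\oplus(\nabla r)^{\perp}$, for any competitor $\phi$ one has $\langle\W(\nabla\phi),\nabla\phi\rangle=e^{\lambda}\langle\nabla\phi,\nabla r\rangle^{2}+\langle\W(\nabla^{\perp}\phi),\nabla^{\perp}\phi\rangle\geq e^{\lambda}\langle\nabla\phi,\nabla r\rangle^{2}$, which is the same $\nabla r$-part lower bound that one gets for $e^{\lambda}\|\nabla\phi\|^{2}$; and for radial test functions $\langle\W(\nabla(\psi\circ r)),\nabla(\psi\circ r)\rangle=e^{\lambda}\psi'(r)^{2}=e^{\lambda}\|\nabla(\psi\circ r)\|^{2}$ exactly, giving the same upper bound. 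These matching two-sided radial estimates yield $\WCap(\Lambda_{\rho,\infty})=0\iff\C^{\lambda}(\Lambda_{\rho,\infty})=0$, i.e. $(M,g,\W)$ is $\W$-parabolic if and only if $(M,g,e^{\lambda})$ is $\lambda$-parabolic.

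The delicate point is the second step, not the first: one must be sure that, for a manifold with a pole, the vanishing of the capacity is genuinely pinned down by these radial bounds. This is precisely where the reduction to the weighted setting does the work — once the identity is available, the $\W$-type problem along $r$ literally \emph{is} the $e^{\lambda}$-weighted type problem, which is then handled exactly as in the isotropic/weighted case treated in \cite{hurtado2020a,hpr2020a} and, when $(M,g)$ is itself a model space with $\lambda$ a radial weight, reduces to the explicit radial integral of Proposition~\ref{capacityweighted}. The isotropic conductivity $\W=e^{\lambda}\Id$, for which $\nabla r$ is automatically an eigenvector, is the special case recovered here (compare Corollary~\ref{cor:LapWfoisotropic} and the remark following Definition~\ref{defTypeRep}).
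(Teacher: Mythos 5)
Your first paragraph coincides with the paper's own proof: the paper likewise writes $\Delta_{\W}(F\circ r)=\Div\bigl(\W(F'(r)\nabla r)\bigr)=\Div\bigl(e^{\lambda}F'(r)\nabla r\bigr)$, expands by the product rule using $\Vert\nabla r\Vert=1$, and reads off $e^{\lambda}\Delta^{\lambda}(F\circ r)$. That part is correct and is essentially verbatim the published argument.

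The gap is in your second step, and it is a genuine one. You squeeze both $\WCap(B_{\rho},B_{R})$ and $\C^{\lambda}(B_{\rho},B_{R})$ between the same lower bound $L(\rho,R)$ (coming from keeping only the radial part $e^{\lambda}\langle\nabla\phi,\nabla r\rangle^{2}$ of either integrand) and the same upper bound $U(\rho,R)$ (coming from radial competitors, on which the two energies agree). But $L\leq A\leq U$ and $L\leq B\leq U$ give $A=0\iff B=0$ only if $\lim_{R\to\infty}L$ and $\lim_{R\to\infty}U$ vanish simultaneously, and they need not: already for a non‑rotationally‑symmetric isotropic weight on $\mathbb{R}^{2}$ (say a high‑conductivity angular sector whose width shrinks with $r$) the "radial‑part" lower bound tends to $0$ while the radial‑competitor upper bound stays positive, and the type is then decided by genuinely non‑radial competitors. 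For such competitors the two energies differ by the transverse terms $\langle\W(\nabla^{\perp}\phi),\nabla^{\perp}\phi\rangle$ versus $e^{\lambda}\Vert\nabla^{\perp}\phi\Vert^{2}$, and nothing in the hypothesis $\W(\nabla r)=e^{\lambda}\nabla r$ relates the eigenvalues of $\W$ on $(\nabla r)^{\perp}$ to $e^{\lambda}$. So your "matching two‑sided radial estimates" do not close the equivalence; one needs either a comparability hypothesis between $e^{\lambda}$ and the transverse eigenvalues (in the spirit of Section \ref{sec:com:cap} and Remark \ref{boundedeigen}) or a symmetry forcing the minimizers to be radial. To be fair to you, the paper's own proof consists solely of the displayed identity and asserts the final "Hence" without any argument, so you were attempting to supply something the authors omitted — but the argument you supplied does not work as stated.
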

\begin{proof}
\begin{eqnarray*}
\Delta_{\mathcal{W}}(F\circ r)&=&{\rm div}(\mathcal{W}(F'(r)\nabla r))={\rm div}(e^\lambda \,F'(r)\, \nabla r))\\
&=& e^\lambda\,F''(r)  + e^\lambda\,F'(r)\,\langle \nabla \lambda,\nabla r \rangle +  e^\lambda\,F'(r)\, {\rm div}(\nabla r)\\
&=& e^\lambda\, \left(\Delta(F\circ r)+ \langle \nabla \lambda, \nabla (F\circ r)\rangle\right)=e^\lambda\, \Delta^\lambda (F\circ r).
\end{eqnarray*}
\end{proof}

\section{Intermezzo concerning a possible comparison extension} \label{sec:Intermezzo}

The proof of the above proposition holds true also in the following more general setting:
Let $\tilde{\rho}: M \rightarrow \mathbb{R}$ be a continuous exhaustion function on $M$. That means that the level sets $B_t=\{p\in M\,:\, \tilde{\rho}(p)< t\}$ are precompact and $\tilde{\rho}(p) \to +\infty$ when $p$ is leaving any compact set. Assume moreover that $\tilde{\rho}$ is smooth and that $\Vert\nabla 
\tilde{\rho}\Vert=1$, in both cases out  of a compact set, so  $\partial B_t$  is a smooth hypersurface for all $t$ large enough, namely, for $t\geq \rho_0$ for some number $\rho_0>0$. If $M$ is a manifold with a pole $o$, the distance function $r$ from the pole is an example of such a function. 

 Given such an exhaustion $\tilde{\rho}: M \rightarrow \mathbb{R}$, we consider the following definition: we say that the \emph{Hessian of $\tilde{\rho}$ is $\eta$-bounded from below (resp. from above)} if there exists a number $\rho_0>0$ and a continuous function $\eta:[\rho_0,+\infty)\to \mathbb{R}$ such that for any $p\in M-B_{\rho_0}$ and any $v\in T_pM$
\begin{equation}
    {\rm Hess}\, \tilde{\rho}(v,v)\geq (\leq)\eta(\tilde{\rho})\,(\Vert v\Vert^2-\langle \nabla \tilde{\rho},v\rangle).
\end{equation}

Associated with the bounding function $\eta$, we can define the function 
\begin{equation}
    w(\tilde{\rho})= C\,e^{\int_{\rho_0}^{\tilde{\rho}} \eta(s)\,ds}
\end{equation}
for some positive constant $C$. This function is solution to the second order differential equation 
\begin{equation}
w'(\tilde{\rho})-\eta(\tilde{\rho})\, w(\tilde{\rho})=0.
\end{equation}

With these considerations, we can recover that the inequalities 
\begin{equation}
\begin{aligned}
    \Delta_\mathcal{W}\,(F\circ \tilde{\rho}) &\leq (\geq)\left(F''(\tilde{\rho})-F'(\tilde{\rho})\eta(\tilde{\rho})\right) \langle\mathcal{W}(\nabla \tilde{\rho}),\nabla \tilde{\rho}\rangle+{\rm tr} (\mathcal{W})\, F'(\tilde{\rho})\eta(\tilde{\rho})\\
    & +F'(\tilde{\rho})\langle {\rm div} (\mathcal{W}),\nabla \tilde{\rho}\rangle
\end{aligned}
\end{equation}
\noindent are satisfied on $M-B_{\rho_0}$, by any smooth $F:(0,+\infty)\to \mathbb{R}$ with $F'\leq 0$ when we have a smooth exhaustion function $\tilde{\rho}$ defined on $M-B_{\rho_0}$, satisfying $\Vert\nabla 
\tilde{\rho}\Vert=1$ and such that the Hessian of $\tilde{\rho}$ is $\eta$-bounded from below (resp. from above), for some function $\eta$. 

Therefore, the comparison results for the capacity established in Sections \ref{sec:IntrinsicCrit} and \ref{sec:ExtrinsicCrit}  can be stated for manifolds where  such an exhaustion function $\tilde{\rho}$ is defined on $M-B_{\rho_0}$, with the Hessian bounded from below, (resp. from above). In particular, these results hold for manifolds with a pole such that the distance to the pole has Hessian bounded from below or from above.

On the other hand, notice that if $M$ has a pole $o$ and $r$ is the distance from the pole, Theorem \ref{teo:GreeneWu} asserts that when the sectional curvatures in the radial directions satisfies inequality \eqref{ineq:sec} for a smooth function $w(s)$ such that $w(0)=0$ and $w(s)>0$ for all $s>0$, then the Hessian of $r$ is $\eta$-bounded from below (resp. from above) for any $\rho_0>0$ and $\eta(r)=w'(r)/w(r)$.\\

Hence, for the sake of simplicity, in the following sections we will only work with manifolds with poles $o\in M$, and we will correspondingly assume the classical hypotheses on the radial sectional curvatures as viewed from these poles.

\section{Intrinsic criteria for $\mathcal{W}$-parabolicity and $\mathcal{W}$-hyperbolicity} \label{sec:IntrinsicCrit}

Let $(M,g,\mathcal{W})$ be a conductive Riemannian manifold with a pole $o \in M$ and denote $r: M \to \mathbb{R}$ the distance function with respect to $o$. In this section, we provide estimates for the conductive capacity of a metric ball centered at the pole, assuming lower or upper bounds for the radial sectional curvatures of the manifold. From the capacity estimates we will deduce conclusions about the type problem: Is the conductive Riemannian manifold $\W$-parabolic or is it $\W$-hyperbolic?\\

Throughout this section we will denote by $B_R$ the open metric ball of radius $R>0$ centered at $o$.

\begin{theorem}  \label{thm:Rp}
Let $(M^n,g,\mathcal{W})$ be a conductive Riemannian manifold with a pole $o \in M$, $r:M\to \mathbb{R}$ the distance function from $o$ and $w(s)$ a smooth function such that $w(0)=0$, $w'(0)=1$, and $w(s)>0$ for all $s>0$. Suppose that the following conditions are fulfilled:
\begin{itemize}
\item[(a) ] For any $p\in M- \{o\}$ and any plane $\sigma_p \subset T_p M$ containing $(\nabla r)_p$,
\begin{displaymath}
{\rm sec}(\sigma_p) \leq (\geq) - \frac{w''(r(p))}{w(r(p))}.
\end{displaymath}
\item[(b) ] There are numbers $\rho,\,q>0$, and a continuous function $\theta:[\rho,+\infty)\to \mathbb{R}$ such that the following inequalities hold in $M- B_{\rho}$ i.e. for all $r \geq \rho$:
\begin{enumerate}
\item $w'(r)\bigg({\rm tr}(\mathcal{W})- q\,\langle\mathcal{W}(\nabla r), \nabla r\rangle\bigg) \geq (\leq)\,\, 0$,
\item $\langle{\rm div}(\mathcal{W}),\nabla r\rangle \geq (\leq)\, \theta(r)\,\langle\mathcal{W}(\nabla r), \nabla r\rangle$.
\end{enumerate}
\end{itemize}
\medskip
Then, for all $R>\rho$,
\begin{equation}\label{eq:capacity_comp_q}
{\rm Cap}_\mathcal{W}(B_\rho, B_R)\geq (\leq)-\phi'_{\rho,R,h,q}(\rho) \,{\rm Vol}_\W(\partial B_\rho),
\end{equation}
where $\phi_{\rho,R,h,q}$ is the function obtained from \eqref{fsolmodel} by replacing $n$ by $q$, with $h(t)=\int_{\rho}^t\theta(s)\,ds$ for any $t\geq \rho$.\\
\medskip
Moreover, if 
\begin{equation}\label{condinfty}
\int_\rho^\infty w^{1-q}(s)\,e^{-h(s)}ds < (=)\,\,\infty,
\end{equation}
then $M$ is $\mathcal{W}$-hyperbolic (resp. $\mathcal{W}$-parabolic).
\end{theorem}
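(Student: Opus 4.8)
The plan is to use the modified-distance Laplacian inequality of Proposition \ref{ineq1intrinsic} to produce a $\W$-subharmonic (resp. $\W$-superharmonic) comparison function on the annulus $B_R - \overline{B_\rho}$, and then invoke the maximum principle (Theorem \ref{Hopf}) to compare the $\W$-capacity of $(B_\rho, B_R)$ with the explicit model capacity computed in Proposition \ref{capacityweighted} (in the generalized-dimension form of Remark \ref{Nomodel}, since $q$ need not be an integer). First I would set $F := \phi_{\rho,R,h,q}$, which by Remark \ref{Nomodel} solves $F'' + F'\left((q-1)\,w'/w + h'\right) = 0$ on $[\rho,R]$ with $F(\rho)=1$, $F(R)=0$; note $h' = \theta$ and $F' \leq 0$, so Proposition \ref{ineq1intrinsic} applies to $F\circ r$.

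Next I would plug this $F$ into the right-hand side of \eqref{ineq:WLaplacian}. Writing $A := \langle \W(\nabla r),\nabla r\rangle > 0$ and using $F'' = -F'\left((q-1)\,w'/w + \theta\right)$, the bracket $\left(F'' - F'\,w'/w\right)A$ becomes $-F'\left(q\,w'/w + \theta\right)A = -F'\,q\,\frac{w'}{w}\,A - F'\,\theta\,A$. Adding the remaining two terms ${\rm tr}(\W)\,F'\,\frac{w'}{w} + F'\langle{\rm div}(\W),\nabla r\rangle$ and regrouping, the estimate on $\Delta_\W(F\circ r)$ reads
\begin{equation*}
\Delta_\W(F\circ r) \leq (\geq)\; F'(r)\,\frac{w'(r)}{w(r)}\Big({\rm tr}(\W) - q\,A\Big) \; +\; F'(r)\Big(\langle{\rm div}(\W),\nabla r\rangle - \theta(r)\,A\Big).
\end{equation*}
Since $F' \leq 0$, hypothesis (b)(1) makes the first product $\leq 0$ (resp. $\geq 0$) and hypothesis (b)(2) makes the second product $\leq 0$ (resp. $\geq 0$); hence $F\circ r$ is $\W$-subharmonic (resp. $\W$-superharmonic) on $B_R - \overline{B_\rho}$, away from ${\rm cut}(o)=\emptyset$ (we have a pole). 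I would then let $u$ be the actual solution of the Dirichlet problem \eqref{eqWLap} for the capacitor $(B_\rho, B_R)$; by Theorem \ref{Hopf}, $u \geq F\circ r$ (resp. $u \leq F\circ r$) on the annulus, with equality on the boundary, so along $\partial B_\rho$ one gets $\langle \W(\nabla u), n_{\partial B_\rho}\rangle \leq \langle \W(\nabla(F\circ r)), n_{\partial B_\rho}\rangle$ (resp. $\geq$), where $n_{\partial B_\rho} = -\nabla r$ points into $B_\rho$. Computing the flux of $\W(\nabla(F\circ r)) = F'(\rho)\,\W(\nabla r)$ through $\partial B_\rho$ gives exactly $-F'(\rho)\,{\rm Vol}_\W(\partial B_\rho) = -\phi'_{\rho,R,h,q}(\rho)\,{\rm Vol}_\W(\partial B_\rho)$, and integrating $\langle\W(\nabla u), n_{\partial B_\rho}\rangle$ over $\partial B_\rho$ is ${\rm Cap}_\W(B_\rho,B_R)$; this yields \eqref{eq:capacity_comp_q}.

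For the final dichotomy I would let $R\to\infty$. From Remark \ref{Nomodel}, $-\phi'_{\rho,R,h,q}(\rho) = w^{1-q}(\rho)\,e^{-h(\rho)}\big(\int_\rho^R w^{1-q}(s)e^{-h(s)}ds\big)^{-1}$; as $R\to\infty$ this tends to a positive limit precisely when $\int_\rho^\infty w^{1-q}(s)e^{-h(s)}ds < \infty$, and to $0$ when the integral diverges. In the lower-bound branch, \eqref{condinfty} finite then forces ${\rm Cap}_\W(B_\rho) = \lim_R {\rm Cap}_\W(B_\rho,B_R) > 0$, so by Theorem \ref{theorGrig} $M$ is $\W$-hyperbolic. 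In the upper-bound branch, \eqref{condinfty} infinite forces ${\rm Cap}_\W(B_\rho,B_R)\to 0$, hence ${\rm Cap}_\W(B_\rho)=0$ and $M$ is $\W$-parabolic. The main technical obstacle is the regularity/barrier bookkeeping in the maximum-principle step: strictly speaking Proposition \ref{ineq1intrinsic} only gives the differential inequality away from ${\rm cut}(o)$, but the pole hypothesis makes ${\rm cut}(o)=\emptyset$, and one still needs $F\circ r$ to be an admissible sub-/supersolution up to $\partial B_R$ (where $F$ is merely $C^1$ if $w$ has limited regularity) — this is handled by the standard observation that $F\circ r$ is smooth on the open annulus and continuous up to the boundary, which is all Theorem \ref{Hopf} requires, together with monotonicity of $R\mapsto {\rm Cap}_\W(B_\rho,B_R)$ to pass to the limit.
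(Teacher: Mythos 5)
Your overall strategy is exactly the paper's: transplant $\phi_{\rho,R,h,q}$, use the ODE \eqref{eq:sabika} to rewrite $F''-F'w'/w=-F'\bigl(q\,w'/w+\theta\bigr)$, regroup the right-hand side of \eqref{ineq:WLaplacian} into the two products controlled by (b)(1) and (b)(2), compare with the Dirichlet solution $u$ via Theorem \ref{Hopf}, read off the flux through $\partial B_\rho$, and let $R\to\infty$. The algebra, the flux computation, and the limiting argument all match the paper's proof.

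However, the maximum-principle step has its signs reversed, and as written the argument does not deliver \eqref{eq:capacity_comp_q}. In the first branch you correctly derive $\Delta_{\W}(F\circ r)\le 0$, but a function with nonpositive $\W$-Laplacian is $\W$-\emph{superharmonic}, not subharmonic; Theorem \ref{Hopf} (applied, as the paper does, to the subharmonic function $-F\circ r$ and the harmonic function $-u$) then gives $u\le F\circ r$ on the annulus, not $u\ge F\circ r$. Consequently the boundary inequality on $\partial B_\rho$ is
$\langle \W(\nabla u), n_{\partial B_\rho}\rangle \ge \langle \W(\nabla(F\circ r)), n_{\partial B_\rho}\rangle$,
the opposite of what you state. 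This matters: integrating \emph{your} inequality over $\partial B_\rho$ would yield ${\rm Cap}_\W(B_\rho,B_R)\le -\phi'_{\rho,R,h,q}(\rho)\,{\rm Vol}_\W(\partial B_\rho)$ in the first branch, i.e.\ the reverse of \eqref{eq:capacity_comp_q}, so the hyperbolicity conclusion would not follow. The fix is purely local — flip "sub/super", the ordering of $u$ and $F\circ r$, and the flux inequality in each branch — after which the proof coincides with the one in the paper. The closing remarks on regularity and on monotonicity of $R\mapsto {\rm Cap}_\W(B_\rho,B_R)$ are fine, and the pole hypothesis does indeed dispose of the cut-locus issue.
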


\begin{proof}
Given $q\in \mathbb{R}_+$, let $\phi_{\rho,R,h,q}$ be the function given in equation \eqref{fsolmodel} for $n=q$, with $h(t)=\int_{\rho}^t\theta(s)\,ds$ for any $t\geq \rho$. Consider the transplanted radial function $v:=\phi_{\rho,R,h,q}\circ r$ defined on {$\widebar{B}_R- B_\rho$}. For $r\geq \rho$, we have that  $\phi_{\rho,R,h,q}'(r)\leq 0$ and 
\begin{equation}
\phi_{\rho,R,h,q}''(r)-\phi_{\rho,R,h,q}'(r)\frac{w'(r)}{w(r)}=-\phi_{\rho,R,h,q}'(r)\left(q\frac{w'(r)}{w(r)}+\theta(r)\right).
\end{equation}

We apply Proposition \ref{ineq1intrinsic} and hypotheses (a) and (b) to obtain that
\begin{eqnarray*}
\begin{aligned}
\Delta_\mathcal{W}\,v&\leq (\geq)\bigg(\phi_{\rho,R,h,q}''(r)-\phi_{\rho,R,h,q}'(r)\frac{w'(r)}{w(r)}\bigg)\langle\mathcal{W}(\nabla r),\nabla r\rangle\\
&+\phi_{\rho,R,h,q}'(r)\,{\rm tr}(\mathcal{W})\,\frac{w'(r)}{w(r)}+ \phi_{\rho,R,h,q}'(r)\, \langle{\rm div}(\mathcal{W}),\nabla r\rangle\\
&=-\phi_{\rho,R,h,q}'(r)\left(q\frac{w'(r)}{w(r)}+\theta(r)\right)\langle\mathcal{W}(\nabla r),\nabla r\rangle \\
&+\phi_{\rho,R,h,q}'(r)\,{\rm tr}(\mathcal{W})\,\frac{w'(r)}{w(r)} +\phi_{\rho,R,h,q}'(r)\, \langle{\rm div}(\mathcal{W}),(\nabla r)\rangle\\
&=\phi_{\rho,R,h,q}'(r)\,\frac{w'(r)}{w(r)}\bigg({\rm tr}(\mathcal{W})-q\,\langle\mathcal{W}(\nabla r),\nabla r\rangle\bigg)\\&+\phi_{\rho,R,h,q}'(r) \big(\langle{\rm div}(\mathcal{W}),(\nabla r)\rangle-\theta(r)\langle\mathcal{W}(\nabla r),\nabla r\rangle\big) \leq (\geq) \,\,0=\Delta_\mathcal{W}\,u,
\end{aligned}
\end{eqnarray*}
where $u$ is the solution to the Dirichlet problem \eqref{eqWLap} in $B_R- \widebar{B}_\rho$.\\

We apply Theorem \ref{Hopf} to the $\mathcal{W}$-subharmonic function $-v$ (resp. $v$) and the $\W$-harmonic function $-u$ (resp. $u$) to obtain that $u \leq (\geq)\, v$ on $B_R- \widebar{B}_{\rho}$ and that 
\begin{equation}\label{inteqgrad1}
\langle \mathcal{W}(\nabla v), n_{\partial B_\rho}\rangle \leq (\geq) \langle \mathcal{W}(\nabla u), n_{\partial B_\rho}\rangle
\end{equation}
\noindent on $\partial B_\rho$, where $n_{\partial B_\rho}$ denotes the unit outward (pointing outward $B_R-\widebar{B}_\rho$) normal vector along $\partial B_\rho$. Hence, as $v$ is non-increasing with respect $r$ and $v=1$ on $\partial B_\rho$, $n_{\partial B_\rho}=\nabla v/\Vert\nabla v\Vert=-\nabla r$.\\

\noindent Therefore, by definition of $\mathcal{W}$-capacity, 

\begin{eqnarray*}
{\rm Cap}_\mathcal{W}(B_\rho,B_R)&=&\int_{\partial B_\rho} \langle \mathcal{W}(\nabla u), n_{\partial B_\rho}\rangle\, d\mu_{\partial B_\rho}\\
&\geq (\leq)& \int_{\partial B_\rho} \langle\mathcal{W}(\nabla v), \frac{\nabla v}{|\nabla v|}\rangle\,d\mu_{\partial B_\rho}\\
&=&-\phi'_{\rho,R,h,q}(\rho)\int_{\partial B_\rho} \langle \mathcal{W}(\nabla r), \nabla r\rangle\,d\mu_{\partial B_\rho}\\
&=&-\phi'_{\rho,R,h,q}(\rho)\,{\rm Vol}_\W(\partial B_\rho).
\end{eqnarray*}
Taking limits when $R$ goes to $\infty$ and taking into account that 
$$\int_{\partial B_\rho} \langle \mathcal{W}(\nabla r), \nabla r\rangle\,d\mu_{\partial B_\rho} >0,$$ 
if \eqref{condinfty} is fulfilled, we deduce that ${\rm Cap}_\W (B_\rho)> (=)\,\,0$ and then $M$ is $\W$-hyperbolic (resp. $\W$-parabolic).
\end{proof}

\begin{remark}
When $q\in \mathbb{N}$, we consider the $q$-dimensional weighted model space $(M^q_w,g_w,e^{h(r_w)})$ with $h(t)=\int_{\rho}^t\theta(s)\,ds$ for $t\geq \rho$. Then, using \eqref{fcapacitymodel}, equation \eqref{eq:capacity_comp_q} reads
\begin{equation} \label{eq:CapComp1}
\begin{aligned}
\frac{{\rm Cap}_\mathcal{W}(B_\rho,B_R)}{{\rm Vol}_{\mathcal{W}}(\partial B_\rho)}\geq (\leq)\frac{\C^h(B^{w}_\rho,B^w_R)}{{\rm Vol}_h(\partial B^w_\rho)}.
\end{aligned}
\end{equation}
Moreover, the integral condition \eqref{condinfty} is equivalent to the weighted hyperbolicity (resp. weighted parabolicity) of the weighted manifold $(M^q_w,g_w,e^{h(r_w)})$, where $r_w$ denotes here the distance from the pole in the model.\\
\end{remark}

\begin{remark}
For the isotropic case $\W=e^f\cdot {\rm Id}$ with $q=n$, we have that $\langle\mathcal{W}(\nabla r),\nabla r\rangle=e^f$, ${\rm tr}(\mathcal{W})=n\,e^f$, and ${\rm div}(\mathcal{W})=e^f\,\nabla f$. Then, hypothesis (1) in (b) is always fulfilled and condition (2) in (b) is equivalent to the existence of a continuous function $\theta:[\rho,+\infty)\to \mathbb{R}$ such that $\langle \nabla f,\nabla r \rangle \geq (\leq) \,\theta (r)$ for all $p\in M- B_{\rho}$. In this way, we recover Theorems 4.9 and 4.10 in \cite{hpr2020a}, since we can replace in Theorem \ref{thm:Rp}, ${\rm sec}(\sigma_p)\geq - \frac{w''(r(p))}{w(r(p))}$ by ${\rm Ric}((\nabla r)_p, (\nabla r)_p)\geq -(n-1) \frac{w''(r(p))}{w(r(p))}$ in the isotropic case and get the same conclusions. However, this idea cannot be implemented for a general $\mathcal{W}$. 
\end{remark}

The $2$-parameter family of examples in Proposition \ref{1:prop:2DVicent} follows from the above Theorem. For convenience we repeat the proposition here as a corollary to Theorem \ref{thm:Rp} with a proof as follows: 

\begin{corollary} \label{cor:2DVicent}
Let $(M^{2}, g, \W) = (\mathbb{R}^{2}, g_{\rm can}, \W_{\lambda,\alpha})$ denote a conductive Riemannian manifold where $g_{\rm can}$ denotes the Euclidean metric in $\mathbb{R}^{2}$, and the conductivity operator $\W_{\lambda,\alpha}$ is given by the following smooth field of tangent space endomorphisms at each point $(x,y) \in \mathbb{R}^{2}$:
\begin{equation}
\begin{aligned}
 \W_{\lambda,\alpha}(\partial x) &= e^{\alpha\cdot(x^2 + y^2)}\left((\lambda+1)\partial x + (\lambda-1)\partial y \right)  \\
 \W_{\lambda,\alpha}(\partial y) &=  e^{\alpha\cdot(x^2 + y^2)}\left( (\lambda-1)\partial x + (\lambda+1)\partial y \right)  \quad ,
 \end{aligned}
\end{equation}
with $\lambda>0$ and $\alpha \in \mathbb{R}$. Then, for any choice of $\lambda>0$, $(\mathbb{R}^{2}, g_{\rm can}, \W_{\lambda,\alpha})$ is $\W_{\lambda,\alpha}$-parabolic for $\alpha \leq 0$ and  $\W_{\lambda,\alpha}$-hyperbolic for $\alpha > 0$. 
\end{corollary}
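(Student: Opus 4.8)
The plan is to apply Theorem \ref{thm:Rp} with the model warping function $w(s)=s$ (so that $(\mathbb{R}^2,g_{\rm can})$ is the model space itself, with $-w''/w=0$ equal to its radial sectional curvature, making hypothesis (a) an equality) and with the dimension parameter $q=2$, chosen so that $w^{1-q}(s)=1/s$. The decisive computations are then the evaluations of $\langle\W_{\lambda,\alpha}(\nabla r),\nabla r\rangle$, $\operatorname{tr}(\W_{\lambda,\alpha})$, and $\langle\operatorname{div}(\W_{\lambda,\alpha}),\nabla r\rangle$ along the Euclidean radial field $\nabla r=(x\,\partial x+y\,\partial y)/\sqrt{x^2+y^2}$, where $r=\sqrt{x^2+y^2}$. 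First I would write the matrix of $\W_{\lambda,\alpha}$ as $e^{\alpha r^2}\bigl(\begin{smallmatrix}\lambda+1&\lambda-1\\\lambda-1&\lambda+1\end{smallmatrix}\bigr)$; a short computation gives $\operatorname{tr}(\W_{\lambda,\alpha})=2(\lambda+1)e^{\alpha r^2}$. For the radial quadratic form I would note that the unit radial vector $u_r=(x,y)/r$ satisfies $\langle\W_{\lambda,\alpha}(u_r),u_r\rangle = e^{\alpha r^2}\bigl((\lambda+1)+ (\lambda-1)\cdot\frac{2xy}{r^2}\bigr)$; this is \emph{not} radially symmetric, which is the first mild obstacle, but since $\lambda>0$ the off-diagonal contribution $2xy/r^2\in[-1,1]$ only moves the value inside $[2\lambda e^{\alpha r^2},\,2e^{\alpha r^2}]$ — in particular it stays positive and comparable to $e^{\alpha r^2}$, which is all that is needed for the integral condition.

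The second key computation is $\operatorname{div}(\W_{\lambda,\alpha})$. Because the constant matrix $\bigl(\begin{smallmatrix}\lambda+1&\lambda-1\\\lambda-1&\lambda+1\end{smallmatrix}\bigr)$ is divergence-free, only the scalar factor $e^{\alpha r^2}$ contributes, and a direct calculation using $\partial_x(e^{\alpha r^2})=2\alpha x\,e^{\alpha r^2}$, $\partial_y(e^{\alpha r^2})=2\alpha y\,e^{\alpha r^2}$ gives
\begin{equation}
\operatorname{div}(\W_{\lambda,\alpha}) = 2\alpha\,e^{\alpha r^2}\bigl((\lambda+1)x+(\lambda-1)y,\ (\lambda-1)x+(\lambda+1)y\bigr).
\end{equation}
Pairing this with $u_r=(x,y)/r$ yields $\langle\operatorname{div}(\W_{\lambda,\alpha}),\nabla r\rangle = 2\alpha r\,e^{\alpha r^2}\bigl((\lambda+1)+(\lambda-1)\tfrac{2xy}{r^2}\bigr) = 2\alpha r\,\langle\W_{\lambda,\alpha}(\nabla r),\nabla r\rangle$. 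Thus hypothesis (b)(2) of Theorem \ref{thm:Rp} holds with $\theta(r)=2\alpha r$ — with inequality in both directions, i.e.\ as an identity — regardless of the sign of $\alpha$. For hypothesis (b)(1) with $q=2$: $w'(r)=1>0$ and $\operatorname{tr}(\W_{\lambda,\alpha})-2\langle\W_{\lambda,\alpha}(\nabla r),\nabla r\rangle = e^{\alpha r^2}\bigl(2(\lambda+1)-2(\lambda+1)-2(\lambda-1)\tfrac{2xy}{r^2}\bigr) = -2(\lambda-1)e^{\alpha r^2}\tfrac{2xy}{r^2}$, which unfortunately changes sign (unless $\lambda=1$). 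This is the \emph{main obstacle}: the naive choice $q=2$ fails condition (b)(1), and one must instead pick $q$ inside the interval dictated by the pointwise ratio $\operatorname{tr}(\W_{\lambda,\alpha})/\langle\W_{\lambda,\alpha}(\nabla r),\nabla r\rangle$.

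To resolve this I would exploit that for the hyperbolic/parabolic \emph{conclusion} one only needs the correct \emph{direction} of inequality in (b)(1), matched with the direction in (b)(2) and the direction of the desired capacity bound. Since the ratio $\operatorname{tr}(\W_{\lambda,\alpha})/\langle\W_{\lambda,\alpha}(\nabla r),\nabla r\rangle = 2(\lambda+1)/\bigl((\lambda+1)+(\lambda-1)\tfrac{2xy}{r^2}\bigr)$ ranges over the closed interval with endpoints $(\lambda+1)/\lambda$ and $\lambda+1$ (a bounded interval of positive numbers for every fixed $\lambda>0$), I can choose $q=\min\{(\lambda+1)/\lambda,\ \lambda+1\}$ to get the ``$\geq 0$'' version of (b)(1) (giving the $\W$-hyperbolic direction of the theorem), and $q=\max\{(\lambda+1)/\lambda,\ \lambda+1\}$ to get the ``$\leq 0$'' version (giving the $\W$-parabolic direction). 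With $\theta(r)=2\alpha r$ one has $h(t)=\int_\rho^t 2\alpha s\,ds = \alpha(t^2-\rho^2)$, so the integral test \eqref{condinfty} becomes the convergence/divergence of
\begin{equation}
\int_\rho^\infty s^{1-q}\,e^{-\alpha(s^2-\rho^2)}\,ds.
\end{equation}
For $\alpha>0$ this integral converges (the Gaussian decay dominates any power), so the $\W$-hyperbolic branch of Theorem \ref{thm:Rp} applies and $M$ is $\W_{\lambda,\alpha}$-hyperbolic; for $\alpha\le 0$ the integrand is bounded below by $s^{1-q}e^{\alpha\rho^2}\cdot e^{-\alpha s^2}\ge c\,s^{1-q}$ — wait, one must be careful with $q>2$: since $e^{-\alpha s^2}\ge 1$ when $\alpha\le 0$, the integral is bounded below by $e^{\alpha\rho^2}\int_\rho^\infty s^{1-q}\,ds$, and although $s^{1-q}$ with $q>2$ is integrable at $\infty$, here the factor $e^{-\alpha s^2}$ with $\alpha<0$ grows super-polynomially and forces divergence; for $\alpha=0$ one checks $\int^\infty s^{1-q}ds=\infty$ precisely because $q=\lambda+1$ when $\lambda\le1$ gives $q\le2$, while for $\lambda>1$, $q=(\lambda+1)/\lambda\in(1,2)$, so $1-q\in(-1,0)$ and again the integral diverges. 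In every case $\alpha\le0$ yields divergence, hence $\W_{\lambda,\alpha}$-parabolicity. The only genuinely delicate bookkeeping is keeping the three sign choices (direction in (b)(1) via the choice of $q$, direction in (b)(2), and the target direction of the capacity bound) mutually consistent, and verifying that the chosen $q$ lies in $(1,2]$ when $\alpha\le0$ so that the divergence of $\int^\infty s^{1-q}ds$ genuinely holds; I would organize the proof around the two cases $\lambda\le 1$ and $\lambda>1$ (equivalently, which of $(\lambda+1)/\lambda$, $\lambda+1$ is larger) to make these verifications transparent.
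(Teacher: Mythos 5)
Your overall strategy coincides with the paper's own proof: both apply Theorem \ref{thm:Rp} with $w(s)=s$ and $\theta(r)=2\alpha r$, and all of your computations ($\operatorname{tr}(\W_{\lambda,\alpha})=2(\lambda+1)e^{\alpha r^2}$, $\langle\W_{\lambda,\alpha}(\nabla r),\nabla r\rangle=e^{\alpha r^2}\bigl((\lambda+1)+(\lambda-1)\tfrac{2xy}{r^2}\bigr)$, and the identity $\langle\operatorname{div}(\W_{\lambda,\alpha}),\nabla r\rangle=2\alpha r\,\langle\W_{\lambda,\alpha}(\nabla r),\nabla r\rangle$) agree with the paper. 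Your two choices of $q$ (the paper uses $q=1$ for the hyperbolic branch and $q=1+\max\{\lambda,1/\lambda\}$ for the parabolic branch; you use the two endpoints of the pointwise ratio interval, which is equally admissible) correctly dispose of $\alpha>0$ and $\alpha<0$.

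The gap is at $\alpha=0$ with $\lambda\neq 1$. There you claim $\int^\infty s^{1-q}\,ds=\infty$ because ``$q=\lambda+1$ when $\lambda\le 1$'' and ``$q=(\lambda+1)/\lambda\in(1,2)$ when $\lambda>1$'' --- but those are the \emph{minima} of your two candidates, i.e.\ exactly the values you assigned to the hyperbolic (``$\ge 0$'') branch of (b)(1). For the parabolic conclusion you must take $q=\max\{(\lambda+1)/\lambda,\,\lambda+1\}=1+\max\{\lambda,1/\lambda\}$, which is strictly greater than $2$ whenever $\lambda\neq 1$; then $\int_\rho^\infty s^{1-q}\,ds<\infty$, the integral condition \eqref{condinfty} in the parabolic branch fails, and Theorem \ref{thm:Rp} only yields a positive upper bound on the capacity, hence no type conclusion. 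Divergence of the integral in the \emph{hyperbolic} branch (which is what your $q\le 2$ actually produces) proves nothing either: it merely makes the capacity lower bound degenerate to $0$. So the sub-case $\alpha=0$, $\lambda\neq 1$ requires a separate argument; the paper supplies one via Remark \ref{boundedeigen}: for $\alpha=0$ the conductivity is constant, hence has bounded eigenvalues, so $\W_{\lambda,0}$-parabolicity is equivalent to the classical parabolicity of $(\mathbb{R}^2,g_{\rm can})$, which holds. (Your treatment of $\alpha<0$ is unaffected, since there $e^{-\alpha s^2}=e^{|\alpha|s^2}$ forces divergence for every $q$.)
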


\begin{proof} Notice that the eigenvalues of $\W_{\lambda,\alpha}$ at each point $(x,y)$ are given by $\lambda_1(x,y)=2\,e^{\alpha \cdot r^{2}}$ and $\lambda_2(x,y)=2\,\lambda\,e^{\alpha \cdot r^{2}}$, where $r^{2} = x^{2} + y^{2}$. Then, for $\lambda>0$, $\W_{\lambda,\alpha}$ is positive definite.

We have that, for $r>0$,
$$
\begin{aligned}
    \langle \nabla r,\mathcal{W}\left(\nabla  r\right)\rangle=&e^{\alpha \cdot r^{2}}\left(\frac{x^2}{r^2}(\lambda+1)+\frac{y^2}{r^2}(\lambda+1)+\frac{2 x y}{r^2}(\lambda-1)\right)\\
    &=e^{\alpha \cdot r^{2}}\left(\lambda+1+\frac{2 x y}{r^2}(\lambda-1)\right).
\end{aligned}
$$
Taking into account that  $-1\leq \frac{2xy}{r^2}\leq 1$  , we can conclude that
$$
\frac{{\rm tr}(\mathcal{W})}{\tilde{q}}= 2e^{\alpha \cdot r^{2}}\,\min\{1,\lambda\}\leq  \langle \nabla r,\mathcal{W}\left(\nabla  r\right)\rangle\leq 2e^{\alpha \cdot r^{2}}\,\max\{1,\lambda\}\leq \frac{{\rm tr}(\mathcal{W})}{1},
$$
where 
\begin{equation*}
\tilde{q}=1+\max\{\lambda, \frac{1}{\lambda}\}.
\end{equation*}
Moreover,
$$
\begin{aligned}
\langle \nabla r, {\rm div} (\mathcal{W})\rangle=&2\alpha r  e^{\alpha \cdot r^{2}}\left(\lambda+1+\frac{2xy}{r^2}(\lambda-1)\right)=2\alpha r \langle \nabla r,\mathcal{W}\left(\nabla r\right)\rangle.
\end{aligned}
$$
Therefore for $\alpha < 0$ we can apply Theorem \ref{thm:Rp} with $w(t)=t$,  $\theta(t)=2\alpha t$ ,  and $q=\tilde{q}$ 
and since 
$$
\int_1^\infty \frac{1}{s^{\tilde{q}-1}}e^{-\alpha \cdot s^{2}}\, ds=\infty,
$$
we can conclude that $(\mathbb{R}^2,g_{\rm can}, \mathcal{W}_{\lambda,\alpha})$ is $\mathcal{W}_{\lambda,\alpha}$-parabolic for any $\lambda>0$ and $\alpha<0$.
For the case  $\alpha >0$, by using Theorem \ref{thm:Rp}  with $w(t)=t$,  $\theta(t)=2\alpha t$ , and $q=1$ we get:

$$
\int_1^\infty e^{-\alpha \cdot s^{2}}\, ds<\infty,
$$
and we can conclude that $(\mathbb{R}^2,g_{\rm can}, \mathcal{W}_{\lambda,\alpha})$ is $\mathcal{W}_{\lambda,\alpha}$-hyperbolic for any $\lambda>0$ and $\alpha > 0$. 

For $\alpha=0$, notice that $\W_{\lambda,0}$ is constant and therefore it has bounded eigenvalues. Then, by Remark \ref{boundedeigen}, $(\mathbb{R}^2,g_{\rm can}, \mathcal{W}_{\lambda,0})$ is $\mathcal{W}_{\lambda,0}$-parabolic since $(\mathbb{R}^2,g_{\rm can})$ is parabolic.

\end{proof}

As another consequence of Theorem \ref{thm:Rp}, for \emph{divergence-free conductivities} $\W$ and  $\theta = 0$ in (2) of (b), we can state the following result:

\begin{corollary} \label{corollaryC}
Let $(M^n,g,\mathcal{W})$ be a conductive Riemannian manifold with a pole $o \in M$, $r:M\to \mathbb{R}$ the distance function from $o$, and $w(s)$ a smooth function such that $w(0)=0$, $w'(0)=1$, and $w(s)>0$ for all $s>0$. Suppose that $\mathcal{W}$ is divergence-free and that the following conditions are fulfilled:
\begin{itemize}
\item[(a) ] For any $p\in M- \{o\}$ and any plane $\sigma_p \subset T_p M$ containing $(\nabla r)_p$,
\begin{displaymath}
{\rm sec}(\sigma_p) \leq (\geq) - \frac{w''(r(p))}{w(r(p))}.
\end{displaymath}\\
\item[(b) ] {There are  numbers $\rho,\, q>0$  such that in $M- B_{\rho}$:}
\begin{equation*}
 w'(r)\bigg({\rm tr}(\mathcal{W})- q\,\langle\mathcal{W}(\nabla r), \nabla r\rangle\bigg) \geq (\leq)\,0.
\end{equation*}
\end{itemize}
Then, for all $R \geq \rho$,
\begin{equation}
{{\rm Cap}_\mathcal{W}(B_\rho,B_R)\geq (\leq)-\phi'_{\rho,R,0,q}(\rho)\,{\rm Vol}_\W(\partial B_\rho)},
\end{equation}
where  $\phi_{\rho,R,0,q}$ is the function given in \eqref{fsolmodel} for $n=q$, with $h=0$.\\

Moreover, if 
\begin{equation}\label{condinftyfree}
\int_\rho^\infty w^{1-q}(s)\,ds < (=)\,\,\infty,
\end{equation}
then $M$ is $\mathcal{W}$-hyperbolic (resp. $\mathcal{W}$-parabolic).
\end{corollary}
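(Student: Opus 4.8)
The plan is to obtain this statement as a direct specialization of Theorem \ref{thm:Rp} to the choice $\theta\equiv 0$. First I would observe that the divergence-free hypothesis $\Div(\W)=0$ forces the radial component $\langle\Div(\W),\nabla r\rangle$ to vanish identically on $M-\{o\}$; hence hypothesis (2) in part (b) of Theorem \ref{thm:Rp} holds --- in fact with equality --- for the function $\theta\equiv 0$ on $[\rho,+\infty)$, which is an admissible choice since Theorem \ref{thm:Rp} only requires $\theta$ to be continuous. Hypothesis (a) above is verbatim hypothesis (a) of Theorem \ref{thm:Rp}, and hypothesis (b) above is exactly hypothesis (1) in part (b) of that theorem. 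Thus all hypotheses of Theorem \ref{thm:Rp} are satisfied with $\theta\equiv 0$.

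With $\theta\equiv 0$ the associated weight is $h(t)=\int_\rho^t\theta(s)\,ds\equiv 0$, so $\phi_{\rho,R,h,q}=\phi_{\rho,R,0,q}$ and the capacity estimate \eqref{eq:capacity_comp_q} becomes precisely ${\rm Cap}_\W(B_\rho,B_R)\geq(\leq)-\phi'_{\rho,R,0,q}(\rho)\,{\rm Vol}_\W(\partial B_\rho)$, which is the asserted bound. For the type dichotomy, the integral condition \eqref{condinfty} collapses, since $e^{-h(s)}\equiv 1$, to $\int_\rho^\infty w^{1-q}(s)\,ds<(=)\,\infty$, i.e.\ exactly \eqref{condinftyfree}; the last clause of Theorem \ref{thm:Rp} then yields that $M$ is $\W$-hyperbolic (resp.\ $\W$-parabolic).

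Alternatively, for a self-contained argument I would simply re-run the proof of Theorem \ref{thm:Rp} with every $\theta$-term deleted: transplant the radial model function $v:=\phi_{\rho,R,0,q}\circ r$ onto $\widebar{B}_R-B_\rho$, apply Proposition \ref{ineq1intrinsic} --- whose final term now drops out because $\Div(\W)=0$ --- together with hypotheses (a) and (b) to deduce $\Delta_\W v\leq(\geq)0=\Delta_\W u$, where $u$ solves the Dirichlet problem \eqref{eqWLap} in $B_R-\widebar{B}_\rho$, and then invoke the maximum principle (Theorem \ref{Hopf}) to compare the normal derivatives of $u$ and $v$ along $\partial B_\rho$ and read off the capacity. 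The only points deserving a comment are that $n_{\partial B_\rho}=-\nabla r$ and that ${\rm Vol}_\W(\partial B_\rho)=\int_{\partial B_\rho}\langle\W(\nabla r),\nabla r\rangle\,d\mu_{\partial B_\rho}>0$ by positive-definiteness of $\W$, so that one may pass to the limit $R\to\infty$ in the integral condition.

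I do not expect any genuine obstacle here, as the result is a clean corollary of Theorem \ref{thm:Rp}. The single point to be careful about is the legitimacy of the degenerate choice $\theta\equiv 0$ (which is fine, since continuity is all that Theorem \ref{thm:Rp} demands) and, along the self-contained route, that the sign bookkeeping in Proposition \ref{ineq1intrinsic} goes through unchanged when the divergence term is identically zero rather than merely bounded by $\theta\,\langle\W(\nabla r),\nabla r\rangle$.
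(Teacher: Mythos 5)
Your proposal is correct and coincides with the paper's own treatment: the paper states this corollary as an immediate specialization of Theorem \ref{thm:Rp} to divergence-free $\W$ with $\theta\equiv 0$ (so that hypothesis (2) of (b) holds trivially and $h\equiv 0$), exactly as you argue. No further proof is given or needed.
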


Using the definition of conductivity, for all $p\in M$ and all $v\in T_p M$, $\mu(p)\,\Vert v\Vert^2 \leq \langle \W(v),v \rangle\leq \kappa(p)\,\Vert v\Vert^2$ for some smooth positive functions $\mu$ and $\kappa$, and we can now state the following results.

\begin{theorem} \label{propIntrinsicA2}
Let $(M^n,g,\mathcal{W})$ be a conductive Riemannian manifold with a pole $o \in M$, $r:M\to \mathbb{R}$ the distance function from $o$, and $w(s)$ a smooth function such that $w(0)=0$, $w'(0)=1$, and $w(s)>0$ for all $s>0$.  Let $\kappa$ be as in Definition \ref{defConducRiem} and suppose that the following conditions are fulfilled:
\begin{itemize}
\item[(a) ] For any $p\in M- \{o\}$ and any plane $\sigma_p \subset T_p M$ containing $(\nabla r)_p$,
\begin{displaymath}
{\rm sec}(\sigma_p) \geq (\leq) - \frac{w''(r(p))}{w(r(p))}.
\end{displaymath}\\

\item[(b) ] There is a number $\rho>0$ and a continuous functions $\theta:[\rho,+\infty)\to \mathbb{R}$ such that for all $p\in M- B_{\rho}$:
\medskip
\begin{enumerate}

\item $\langle{\rm div}(\mathcal{W}),(\nabla r)_p\rangle/ \kappa(p) \leq (\geq)\,\theta(r(p))$.
\medskip
\item $\theta(r(p))+n \frac{w'(r(p))}{w(r(p))}\leq (\geq)\,\,0$ and $w'(r(p))\geq (\leq)\,\,0$ (balance condition).
\end{enumerate}
\end{itemize}
Then, for all $R>\rho$,
\begin{eqnarray}\label{compIntrinsicA2}
    \frac{{\rm Cap}_\mathcal{W}(B_\rho,B_R)}{ {\rm Vol}_\W (\partial B_\rho)}&\leq (\geq)&  \frac{{\rm Cap}^h(B^w_\rho,B^w_R)}{{\rm Vol}_h(\partial B^w_\rho)},
\end{eqnarray}
where ${\rm Cap}^h(B^w_\rho,B^w_R)$ is the weighted capacity of a metric annulus in the  weighted model space $(M^n_w, g_w, e^{h(r_w)})$ with $h(t)=\int_{\rho}^t \theta(s)\,ds$ for any $t\geq \rho$, and ${\rm Vol}_h$ is the corresponding weighted volume in $M_w^n$.\\ 

Moreover, {since $\int_\rho^\infty w^{1-n}(s)\,e^{-h(s)}ds=\infty$, then} $M$ is $\mathcal{W}$-parabolic (resp. if $
\int_\rho^\infty w^{1-n}(s)\,e^{-h(s)}ds <\infty$, 
then $M$ is $\W$-hyperbolic).

\end{theorem}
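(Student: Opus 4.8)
The plan is to re-run the transplantation plus maximum-principle scheme used for Theorem~\ref{thm:Rp}, but now comparing with the \emph{$n$-dimensional} weighted model $(M^n_w,g_w,e^{h(r_w)})$ and in the opposite direction; I describe the first alternative (radial sectional curvatures bounded from below, $\W$-parabolicity), the parenthetical one following by reversing every inequality and invoking Proposition~\ref{ineq1intrinsic} in its other alternative (with $w'\le 0$). Fix $R>\rho$ and let $\phi:=\phi_{\rho,R,h,n}$ be the radial function of \eqref{fsolmodel} with dimension parameter $n$ and $h(t)=\int_\rho^t\theta(s)\,ds$; by Remark~\ref{Nomodel} it solves \eqref{eq:sabika} (with $h'=\theta$) on $[\rho,R]$, with $\phi(\rho)=1$, $\phi(R)=0$, and it is non-increasing, so $\phi'\le 0$. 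Set $v:=\phi\circ r$ on $\widebar{B}_R-B_\rho$; since $o$ is a pole, $r$ is smooth there and there is no cut locus, so Proposition~\ref{ineq1intrinsic} holds throughout $B_R-\widebar{B}_\rho$, and $\partial B_\rho,\partial B_R$ are smooth hypersurfaces.

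The heart of the proof is to show that $v$ is $\W$-subharmonic on $B_R-\widebar{B}_\rho$. Applying Proposition~\ref{ineq1intrinsic} with $F=\phi$ and hypothesis (a) in its ``$\ge$'' form, and then using the ODE for $\phi$ to rewrite $\phi''-\phi'\frac{w'}{w}$ as $-\phi'\big(n\frac{w'}{w}+\theta\big)$, one obtains
\begin{equation*}
\Delta_{\W}\,v\ \geq\ -\phi'\Big(n\frac{w'}{w}+\theta\Big)\langle\W(\nabla r),\nabla r\rangle\ +\ {\rm tr}(\W)\,\phi'\,\frac{w'}{w}\ +\ \phi'\,\langle{\rm div}(\W),\nabla r\rangle .
\end{equation*}
The plan now is to replace, term by term, $\langle\W(\nabla r),\nabla r\rangle$ by $\kappa$, ${\rm tr}(\W)=\sum_i\lambda_i$ by $n\kappa$, and $\langle{\rm div}(\W),\nabla r\rangle$ by $\kappa\,\theta(r)$. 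Each replacement only decreases the right-hand side: the balance condition (b)(2) forces $\phi'\le 0$, $w'\ge 0$ and $n\frac{w'}{w}+\theta\le 0$, so the coefficients $-\phi'\big(n\frac{w'}{w}+\theta\big)$ and $\phi'\frac{w'}{w}$ multiplying the pointwise quantities are $\le 0$, while $\langle\W(\nabla r),\nabla r\rangle\le\kappa$ and ${\rm tr}(\W)\le n\kappa$ by Definition~\ref{defConducRiem} and $\langle{\rm div}(\W),\nabla r\rangle\le\kappa\theta$ by hypothesis (b)(1). After these substitutions the right-hand side collapses to $\kappa\,\phi'\big[-(n\frac{w'}{w}+\theta)+n\frac{w'}{w}+\theta\big]=0$, so $\Delta_{\W}\,v\ge 0$.

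With this in hand, let $u$ be the solution of \eqref{eqWLap} in $\Omega=B_R-\widebar{B}_\rho$, so that ${\rm Cap}_{\W}(B_\rho,B_R)=\int_{\partial B_\rho}\langle\W(\nabla u),n_{\partial B_\rho}\rangle\,d\mu_{\partial B_\rho}$. Since $u$ is $\W$-harmonic, $v$ is $\W$-subharmonic, and $u=v$ on $\partial\Omega$, Theorem~\ref{Hopf} yields $\langle\W(\nabla u),n_{\partial B_\rho}\rangle\le\langle\W(\nabla v),n_{\partial B_\rho}\rangle$ along $\partial B_\rho$; because $v$ is radial and non-increasing, there $n_{\partial B_\rho}=-\nabla r$ and $\nabla v=\phi'(\rho)\,\nabla r$, so integration gives ${\rm Cap}_{\W}(B_\rho,B_R)\le-\phi'_{\rho,R,h,n}(\rho)\,{\rm Vol}_{\W}(\partial B_\rho)$, which by Proposition~\ref{capacityweighted} is precisely \eqref{compIntrinsicA2}. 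Finally, $-\phi'_{\rho,R,h,n}(\rho)=w^{1-n}(\rho)e^{-h(\rho)}\big(\int_\rho^R w^{1-n}e^{-h}\big)^{-1}$, so letting $R\to\infty$ and using $\int_\rho^\infty w^{1-n}e^{-h}=\infty$ forces ${\rm Cap}_{\W}(B_\rho)=0$, hence $M$ is $\W$-parabolic by Theorem~\ref{theorGrig}. In the parenthetical case the analogous computation produces $\Delta_{\W}\,v\le 0$, so $-v$ is $\W$-subharmonic and the reversed inequalities give ${\rm Cap}_{\W}(B_\rho,B_R)\ge-\phi'_{\rho,R,h,n}(\rho)\,{\rm Vol}_{\W}(\partial B_\rho)$; convergence of the integral then makes ${\rm Cap}_{\W}(B_\rho)>0$, i.e. $M$ is $\W$-hyperbolic.

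I expect the second step to be the main obstacle. Unlike in Theorem~\ref{thm:Rp}, where the dimension parameter $q$ can be chosen so that $\langle\W(\nabla r),\nabla r\rangle$ is matched with $\frac{1}{q}\,{\rm tr}(\W)$, here the model dimension is forced to be $n$, so one must absorb at once the discrepancies between $\langle\W(\nabla r),\nabla r\rangle$ and $\kappa$, between ${\rm tr}(\W)$ and $n\kappa$, and between $\langle{\rm div}(\W),\nabla r\rangle$ and $\kappa\theta$. This is possible only because the balance condition (b)(2) is designed precisely so that the coefficients $-\phi'\big(n\frac{w'}{w}+\theta\big)$ and $\phi'\frac{w'}{w}$ of these quantities have a single sign, making every estimate point the same way. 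Checking these sign conditions — in particular that $-\phi'\big(n\frac{w'}{w}+\theta\big)\le 0$ follows from $\phi'\le 0$ together with $n\frac{w'}{w}+\theta\le 0$, and that $\phi'\frac{w'}{w}\le 0$ follows from $w'\ge 0$ — is the only genuinely delicate bookkeeping.
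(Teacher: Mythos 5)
Your proof is correct and follows essentially the same route as the paper's: transplant $\phi_{\rho,R,h,n}$, invoke Proposition \ref{ineq1intrinsic}, use the sign bookkeeping supplied by the balance condition to replace $\langle\W(\nabla r),\nabla r\rangle$, ${\rm tr}(\W)$ and $\langle{\rm div}(\W),\nabla r\rangle$ by $\kappa$, $n\kappa$ and $\kappa\theta$ so that the right-hand side telescopes to zero via the model ODE \eqref{eq:sabika}, then apply Theorem \ref{Hopf} and let $R\to\infty$. The only differences are cosmetic: the paper additionally observes that in the parabolic alternative the divergence of $\int_\rho^\infty w^{1-n}e^{-h}$ is automatic (integrating $\theta\le -n\,w'/w$ gives $e^{-h(t)}\ge w(t)^n/w(\rho)^n$, hence the integrand is at least $w(t)/w(\rho)^n$), whereas you take it as a hypothesis, and the paper's displayed estimate contains two typographical slips (a missing factor $w'/w$ in the $n\kappa\phi'$ term and a sign in the $(n-1)$ coefficient) that your version silently corrects.
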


\begin{proof}
Let $\phi_{\rho,R,h,n}$ be the function given in \eqref{fsolmodel}, with $h(t)=\int_{\rho}^t \theta(s)\,ds$ for any $t\geq \rho$. Consider the transplanted radial function $v:=\phi_{\rho,R,h,n}\circ r$ defined on $\widebar{B}_R- B_\rho$. For $r\geq \rho$, we have that  $\phi_{\rho,R,h,n}'(r)\leq 0$ and 
\begin{equation*}
\phi_{\rho,R,h,n}''(r)-\phi_{\rho,R,h,n}'(r)\frac{w'(r)}{w(r)}=-\phi_{\rho,R,h,n}'(r)\left(n\frac{w'(r)}{w(r)}+\theta(r)\right) \leq 0.
\end{equation*}

Taking into account that $\langle\mathcal{W}(\nabla r),\nabla r\rangle\leq \kappa$ and ${\rm tr}(\W)\leq n\,\kappa$, we apply Proposition \ref{ineq1intrinsic} and hypotheses (a) and (b) to obtain

\begin{eqnarray*}
\begin{aligned}
\Delta_\mathcal{W}\,v&\geq (\leq) \bigg(\phi_{\rho,R,h,n}''(r)-\phi_{\rho,R,h,n}'(r)\frac{w'(r)}{w(r)}\bigg)\langle\mathcal{W}(\nabla r),\nabla r\rangle\\
&+{\rm tr}(\mathcal{W})\, \phi_{\rho,R,h,n}'(r)\frac{w'(r)}{w(r)}+ \phi_{\rho,R,h,n}'(r) \langle{\rm div}(\mathcal{W}),\nabla r\rangle\\
& \geq (\leq) \kappa \bigg(\phi_{\rho,R,h,n}''(r)-\phi_{\rho,R,h,n}'(r)\frac{w'(r)}{w(r)}\bigg)+ n\,\kappa\, \phi_{\rho,R,h,n}'(r)+\kappa\,\phi_{\rho,R,h,n}'(r) \theta(r)\\
&=  \kappa \bigg( \phi_{\rho,R,h,n}''(r)-(n-1)\phi_{\rho,R,h,n}'(r)\frac{w'(r)}{w(r)} +\phi_{\rho,R,h,n}'(r) \theta(r)\bigg)=0=\Delta_\mathcal{W}\,u,
\end{aligned}
\end{eqnarray*}
where $u$ is the solution to the Dirichlet problem \eqref{eqWLap} in $B_R - \widebar{B}_\rho$.\\

\noindent Following the lines of the proof of Theorem \ref{thm:Rp}, we obtain the result. Notice that, under the hypothesis of ${\rm sec}(\sigma_p) \geq  - \frac{w''(r(p))}{w(r(p))}$, since $\theta(r)\leq -n w'(r)/w(r)$ and $w'(r)\geq 0$ on $M- B_{\rho}$, integrating this inequality, we obtain that
\begin{equation}
h(t)=\int_{\rho}^t \theta(s)ds\leq \log \left(\frac{w(\rho)^n}{w(t)^n}\right),\quad  t\geq \rho.
\end{equation}
Then,
\begin{equation}
\int_\rho^\infty w^{1-n}(s)\,e^{-h(s)}ds \geq \frac{1}{w(\rho)^n}\int_\rho^\infty w(s) ds=\infty,
\end{equation}

and then $M$ is $\W$-parabolic.

\end{proof}
Using the function $\mu$ instead of $\kappa$ in hypothesis (1) of (b) of the above theorem we have the following $\W$-parabolicity result:
\begin{theorem}  \label{propIntrinsicB2}
Let $(M,g,\mathcal{W})$ be a conductive Riemannian manifold with a pole $o \in M$, $r:M\to \mathbb{R}$ the distance function from $o$, and $w(s)$ a smooth function such that $w(0)=0$, $w'(0)=1$, and $w(s)>0$ for all $s>0$. Let $\mu$ be as in Definition \ref{defConducRiem} and suppose that the following conditions are fulfilled:
\begin{itemize}
\item[(a) ] For any $p\in M- \{o\}$ and any plane $\sigma_p \subset T_p M$ containing $(\nabla r)_p$,
\begin{displaymath}
{\rm sec}(\sigma_p) \geq - \frac{w''(r(p))}{w(r(p))}.
\end{displaymath}

\item[(b) ] There is a number $\rho>0$ and continuous function $\theta:[\rho,+\infty)\to \mathbb{R}$ such that for all $p\in M- B_{\rho}$:
\medskip
\begin{enumerate}

\item $\langle{\rm div}(\mathcal{W}),(\nabla r)_p\rangle/\mu(p) \leq \theta(r(p))$.
\medskip
\item $\theta(r(p))+n \frac{w'(r(p))}{w(r(p))}\geq 0$ and $w'(r(p))\leq 0$ (balance condition).
\end{enumerate}
\end{itemize}
Then, for all $R>\rho$,
\begin{equation}
\frac{{\rm Cap}_\mathcal{W}(B_\rho,B_R)}{{\rm Vol}_\W(\partial B_\rho)}\leq \frac{{\rm Cap}^h(B^w_\rho,B^w_R)}{{\rm Vol}_h(\partial B^w_\rho)},
\end{equation}
where ${\rm Cap}^h(B^w_\rho,B^w_R)$ is the weighted capacity of a metric annulus in the  weighted model space $(M^n_w,g_w, {\rm e}^{h(r_w)})$ with $h(t)=\int_{\rho}^t \theta(s) ds$ for any $t\geq \rho$, and ${\rm Vol}_h$ is the corresponding weighted volume in $M_w^n$.\\

Moreover, if 
\begin{equation*}
\int_\rho^\infty w^{1-n}(s)\,e^{-h(s)}ds=\infty,
\end{equation*}
then $M$ is $\mathcal{W}$-parabolic.
\end{theorem}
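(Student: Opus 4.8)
The plan is to follow the transplantation argument from the proof of Theorem~\ref{propIntrinsicA2} essentially verbatim, the only change being that the role of the upper eigenvalue bound $\kappa$ is now played by the lower bound $\mu$, which forces the reversed balance condition (b)(2). Fix $R>\rho$, put $h(t)=\int_\rho^t\theta(s)\,ds$, and let $\phi:=\phi_{\rho,R,h,n}$ be the radial function of \eqref{fsolmodel} with parameter $n$; by \eqref{eq:sabika} it solves $\phi''+(n-1)\phi'\,\frac{w'}{w}+\phi'\,\theta=0$ on $[\rho,R]$, is non-increasing, and satisfies $\phi(\rho)=1$, $\phi(R)=0$. Since $o$ is a pole, $r$ is smooth on $M-\{o\}$, so $v:=\phi\circ r$ is a well-defined function on $\widebar{B}_R-B_\rho$ with the same boundary data as the solution $u$ of the Dirichlet problem \eqref{eqWLap}. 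The whole matter therefore reduces to showing that $v$ is $\mathcal{W}$-subharmonic.

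For that I would apply Proposition~\ref{ineq1intrinsic} in the form valid under ${\rm sec}(\sigma_p)\geq-w''/w$:
\begin{equation*}
\Delta_\mathcal{W}\,v\geq\Big(\phi''-\phi'\,\tfrac{w'}{w}\Big)\langle\mathcal{W}(\nabla r),\nabla r\rangle+{\rm tr}(\mathcal{W})\,\phi'\,\tfrac{w'}{w}+\phi'\,\langle{\rm div}(\mathcal{W}),\nabla r\rangle,
\end{equation*}
and then bound each of the three terms from below by a multiple of $\mu$. The coefficient $\phi''-\phi'\,w'/w=-\phi'\big(n\,w'/w+\theta\big)$ is $\geq 0$ because $-\phi'\geq 0$ and the balance condition makes $n\,w'/w+\theta\geq 0$; together with $\langle\mathcal{W}(\nabla r),\nabla r\rangle\geq\mu$ this bounds the first term below by $\mu\big(\phi''-\phi'\,w'/w\big)$. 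In the second term $w'\leq 0$ and $\phi'\leq 0$ give $\phi'\,w'/w\geq 0$, and ${\rm tr}(\mathcal{W})\geq n\mu$, so it is at least $n\mu\,\phi'\,w'/w$. In the third term, hypothesis (b)(1) rewrites as $\langle{\rm div}(\mathcal{W}),\nabla r\rangle\leq\mu\,\theta(r)$, and multiplying by the non-positive factor $\phi'$ reverses this to $\phi'\,\langle{\rm div}(\mathcal{W}),\nabla r\rangle\geq\mu\,\phi'\,\theta(r)$. Summing the three bounds, $\Delta_\mathcal{W}\,v\geq\mu\big(\phi''+(n-1)\phi'\,w'/w+\phi'\,\theta\big)=0=\Delta_\mathcal{W}\,u$, as desired.

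With $v$ $\mathcal{W}$-subharmonic, $u$ $\mathcal{W}$-harmonic and $v=u$ on $\partial B_\rho\cup\partial B_R$, Theorem~\ref{Hopf} yields $u\geq v$ on $B_R-\widebar{B}_\rho$ and $\langle\mathcal{W}(\nabla u),n_{\partial B_\rho}\rangle\leq\langle\mathcal{W}(\nabla v),n_{\partial B_\rho}\rangle$ along $\partial B_\rho$, where $n_{\partial B_\rho}=-\nabla r$. Since $\nabla v=\phi'(\rho)\nabla r$ on $\partial B_\rho$, integrating this flux inequality and using the definitions of $\mathcal{W}$-capacity and of ${\rm Vol}_\W(\partial B_\rho)$ gives ${\rm Cap}_\mathcal{W}(B_\rho,B_R)\leq-\phi'_{\rho,R,h,n}(\rho)\,{\rm Vol}_\W(\partial B_\rho)$, which is exactly the ratio inequality of the statement once one identifies $-\phi'_{\rho,R,h,n}(\rho)$ with ${\rm Cap}^h(B^w_\rho,B^w_R)/{\rm Vol}_h(\partial B^w_\rho)$ via \eqref{fcapacitymodel}. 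Letting $R\to\infty$: if $\int_\rho^\infty w^{1-n}(s)\,e^{-h(s)}\,ds=\infty$ then $-\phi'_{\rho,R,h,n}(\rho)=w^{1-n}(\rho)e^{-h(\rho)}\big(\int_\rho^R w^{1-n}(s)e^{-h(s)}ds\big)^{-1}\to 0$, hence ${\rm Cap}_\mathcal{W}(B_\rho)=0$ and $M$ is $\mathcal{W}$-parabolic by Theorem~\ref{theorGrig}.

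The only delicate point --- and the only substantive difference from Theorem~\ref{propIntrinsicA2} --- is the sign bookkeeping in the second step: since the relevant eigenvalue estimate is now the \emph{lower} one, every coefficient that multiplies $\mu$ must carry the sign that preserves the inequality, and it is precisely the reversed balance condition ($w'\leq 0$ and $\theta+n\,w'/w\geq 0$) together with $\phi'\leq 0$ that guarantees this. Everything else is a direct transcription of the argument for Theorem~\ref{thm:Rp}.
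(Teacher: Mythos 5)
Your proposal is correct and follows exactly the route the paper intends: the paper gives no separate proof of Theorem \ref{propIntrinsicB2}, stating only that it follows from the argument for Theorem \ref{propIntrinsicA2} with $\mu$ in place of $\kappa$, and your sign bookkeeping (the reversed balance condition making the coefficient of $\langle\mathcal{W}(\nabla r),\nabla r\rangle$ non-negative, $\phi' w'/w\geq 0$, and the reversal under multiplication by $\phi'\leq 0$ in the divergence term) is precisely what makes that substitution legitimate. The transplantation, the application of Proposition \ref{ineq1intrinsic} and Theorem \ref{Hopf}, and the passage to the limit $R\to\infty$ all match the paper's template.
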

\begin{remark} For the isotropic case, $\W=e^f\cdot{\rm Id}$ and then $\langle\mathcal{W}(\nabla r),\nabla r\rangle=e^f=\kappa=\mu$, ${\rm tr}(\mathcal{W})=n\,e^f$, and ${\rm div}(\mathcal{W})=e^f\,\nabla f$. In this case, we do not need hypothesis $(2)$ and we recover Theorems 4.9 and 4.10 in \cite{hpr2020a}. As mentioned before, in the isotropic case (and then, in the Riemannian case also) Theorems \ref{propIntrinsicA2} and \ref{propIntrinsicB2} can be stated bounding the Ricci curvature instead of the sectional one when the sectional curvature is bounded from below. This cannot be done for a general $\mathcal{W}$.
\end{remark}


\section{The extrinsic type problem} \label{sec:ExtrinsicCrit}


Let  $(M^n,g,\mathcal{W})$  be a conductive Riemannian manifold. Let $\Sigma^m\subset M^n$ be a complete $m$-dimensional immersed submanifold of $M^{n}$, $m < n$. For each point $p\in \Sigma$, the tangent space $T_pM$ splits in two orthogonal components:
$$
T_pM=T_p\Sigma\oplus\left(T_p\Sigma\right)^\perp
$$
where $T_p\Sigma$ are the vectors of $T_pM$ tangent to $\Sigma$ and $\left(T_p\Sigma\right)^\perp$ is the orthogonal complement.

\begin{definition} \label{def:CompatibleConduc}
We will say that the immersed submanifold $\Sigma\subset M$ is a \emph{$\mathcal{W}$-compatible submanifold} if for every $p\in \Sigma$, the $(1,1)$-tensor $\mathcal{W}$ satisfies 
\begin{enumerate}
          \item $\mathcal{W}\left(T_p\Sigma\right)=T_p\Sigma$,
        \item $\mathcal{W}\left(\left(T_p\Sigma\right)^\perp\right)=\left(T_p\Sigma\right)^\perp.$
    \end{enumerate} 
\end{definition}
Observe that a $\mathcal{W}$-compatible submanifold $\Sigma\subset M$ with the restriction of the metric tensor $g$ and the restriction of $\mathcal{W}$ to $\Sigma$,  becomes a conductive Riemannian (sub)manifold $(\Sigma, g\vert_\Sigma, \W\vert_\Sigma)$. We denote the Riemannian connections in $M$ and $\Sigma$ by $\nabla$ and $\nabla^\Sigma$, respectively.
Here, for any two vectors $v,w\in T_q\Sigma$ and any extensions into smooth vector fields $V,W$ in $M$ (with $V(q)=v$, $W(q)=w$), we then have
$$
\nabla\mathcal{W}(v,w)=\left.\nabla_w\mathcal{W}(V)-\mathcal{W}(\nabla_w V)\right..
$$
Observe moreover that
$$
{\rm tr}_\Sigma(\nabla \mathcal{W})\neq {\rm tr}_\Sigma (\nabla^\Sigma \mathcal{W}).
$$
Indeed, the vector field ${\rm tr}_\Sigma (\nabla \mathcal{W})$ could be non-tangent to $\Sigma$.\\

Conversely, if we are given a conductivity field $\W^\Sigma$ on $(\Sigma, g\vert_\Sigma)$ then the exponential map from the normal bundle of $\Sigma$ provides smooth extensions $\W$ of $\W^\Sigma$ into a tubular neighborhood so that $\W^{\Sigma}$ becomes the restriction of $\W$ in the following local sense at each point $p \in \Sigma$: $\W(V) = \W^{\Sigma}(V)$ for all $V \in T_{p}\Sigma$ and $\W(U) \in (T_{p}\Sigma)^\perp$ for all $U \in (T_{p}\Sigma)^\perp$. We will prove below that the results obtained in this section are independent of the chosen extension of $\W^\Sigma$, so we can use as a starting point that $\Sigma$ is a $\W$-compatible submanifold of $(M,g,\W)$.  

\begin{definition}
\label{extball} 
If $\Sigma$ is a noncompact submanifold properly immersed in a manifold $M$ with a pole $o$, the \emph{extrinsic metric ball} of (sufficiently large) radius $R>0$ and center $o$ is denoted by $D_R$, and defined as any connected component of the set
\begin{displaymath}
B_R\cap \Sigma=\{p\in \Sigma:\,r(p)<R\},
\end{displaymath}
where $r$ denotes the distance function from the pole $o$.
\end{definition}

Since $\Sigma$ is properly immersed in $M$ the extrinsic balls are precompact open sets in $\Sigma$. As we assume that $\Sigma$ is noncompact, then $D_R\neq \Sigma$ for any $R>0$. Moreover, by Sard's Theorem we deduce that $\partial D_R$ is smooth for almost any $R>0$.

Now, we present another necessary ingredient to establish our results:

\begin{definition}\label{defWH} Let  $(M^n,g,\mathcal{W})$  be a conductive Riemannian manifold. Let $\Sigma\subset M$ be a $\mathcal{W}$-compatible immersed $m$-dimensional submanifold. The $\W$-mean curvature vector of $\Sigma$ is the vector field given by

$$
\Vec{H}_{\mathcal{W}}=\frac{1}{m}\left( \mathcal{W}({\rm tr}_\Sigma B)+{\rm tr}_\Sigma (\nabla\mathcal{W})\right)= \mathcal{W}(\Vec{H})+\frac{1}{m}{\rm tr}_\Sigma( \nabla\mathcal{W}),
$$
where $B$ is the second fundamental form of $\Sigma$ in $M$, namely, $B(X,Y)=(\nabla_X Y)^\bot$ for all $X,\,Y\,\in T \Sigma$, and $\Vec{H}$ is the usual mean curvature vector of $\Sigma$. See references \cite{hpr2020a,hurtado2020a}.
\end{definition}

\begin{proposition}
  Let  $(M^n,g,\mathcal{W})$  be a conductive Riemannian manifold. Let $\Sigma\subset M$ be $\mathcal{W}$-compatible immersed $m$-dimensional submanifold. Given a point $p\in \Sigma$, and an orthonormal basis $\{e_i\}_{i=1}^m$ of $T_p\Sigma$, then  
$$
   m\, \Vec{H}_{\mathcal{W}}(p)= \sum_{i=1}^mB(e_i,\mathcal{W}(e_i))+{\rm div}^\Sigma(\mathcal{W}),
$$  
where ${\rm div}^\Sigma(\mathcal{W}) ={\rm tr}_\Sigma(\nabla^\Sigma\mathcal{W})$ is the divergence on the (sub)manifold $\Sigma$ of the restriction of $\W$ to $\Sigma$. 
\end{proposition}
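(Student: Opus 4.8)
The plan is to compute $m\Vec{H}_{\mathcal{W}}(p)$ directly from its definition in Definition \ref{defWH} and identify the two resulting summands. Starting from $m\Vec{H}_{\mathcal{W}}=\mathcal{W}({\rm tr}_\Sigma B)+{\rm tr}_\Sigma(\nabla\mathcal{W})$, I would first expand the second term using an orthonormal basis $\{e_i\}_{i=1}^m$ of $T_p\Sigma$ and the formula $\nabla\mathcal{W}(e_i,e_i)=\nabla_{e_i}(\mathcal{W}(E_i))-\mathcal{W}(\nabla_{e_i}E_i)$, where $E_i$ are local extensions. The key idea is to split the ambient covariant derivative $\nabla_{e_i}$ into its $\Sigma$-tangential part $\nabla^\Sigma_{e_i}$ and its normal part, i.e. use the Gauss formula $\nabla_{e_i}E_i=\nabla^\Sigma_{e_i}E_i+B(e_i,e_i)$ and, since $\mathcal{W}(E_i)$ is tangent to $\Sigma$ by $\mathcal{W}$-compatibility, also $\nabla_{e_i}(\mathcal{W}(E_i))=\nabla^\Sigma_{e_i}(\mathcal{W}(E_i))+B(e_i,\mathcal{W}(E_i))$.

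Substituting these into the expression for ${\rm tr}_\Sigma(\nabla\mathcal{W})$ gives
\begin{equation*}
{\rm tr}_\Sigma(\nabla\mathcal{W})=\sum_{i=1}^m\Big(\nabla^\Sigma_{e_i}(\mathcal{W}(E_i))+B(e_i,\mathcal{W}(e_i))-\mathcal{W}(\nabla^\Sigma_{e_i}E_i)-\mathcal{W}(B(e_i,e_i))\Big).
\end{equation*}
The term $\sum_i\big(\nabla^\Sigma_{e_i}(\mathcal{W}(E_i))-\mathcal{W}(\nabla^\Sigma_{e_i}E_i)\big)$ is exactly ${\rm tr}_\Sigma(\nabla^\Sigma\mathcal{W})={\rm div}^\Sigma(\mathcal{W})$ — here I use $\mathcal{W}$-compatibility again to guarantee that $\mathcal{W}(E_i)$ is tangential so that $\nabla^\Sigma_{e_i}(\mathcal{W}(E_i))$ makes sense as the intrinsic derivative of a tangent field on $\Sigma$, and that $\mathcal{W}$ restricted to $\Sigma$ is a genuine $(1,1)$-tensor on $\Sigma$. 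Meanwhile $\sum_i\mathcal{W}(B(e_i,e_i))=\mathcal{W}({\rm tr}_\Sigma B)$, which cancels with the first summand of $m\Vec{H}_{\mathcal{W}}$, and $\sum_i B(e_i,\mathcal{W}(e_i))$ is the claimed term. Assembling:
\begin{equation*}
m\Vec{H}_{\mathcal{W}}(p)=\mathcal{W}({\rm tr}_\Sigma B)+\sum_{i=1}^mB(e_i,\mathcal{W}(e_i))+{\rm div}^\Sigma(\mathcal{W})-\mathcal{W}({\rm tr}_\Sigma B)=\sum_{i=1}^mB(e_i,\mathcal{W}(e_i))+{\rm div}^\Sigma(\mathcal{W}),
\end{equation*}
which is the assertion.

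The main point requiring care — rather than a genuine obstacle — is the bookkeeping of tangential versus normal parts: one must verify that $\nabla^\Sigma_{e_i}(\mathcal{W}(E_i))-\mathcal{W}(\nabla^\Sigma_{e_i}E_i)$ really is the intrinsic covariant derivative term whose trace is ${\rm div}^\Sigma(\mathcal{W})$, and that this is independent of the extension $E_i$ (it is, since the difference $\nabla^\Sigma\mathcal{W}$ is tensorial). One should also note that $B(e_i,\mathcal{W}(e_i))$ is well-defined because $\mathcal{W}(e_i)\in T_p\Sigma$, again by $\mathcal{W}$-compatibility. Everything else is a direct manipulation using the Gauss formula and linearity of $\mathcal{W}$.
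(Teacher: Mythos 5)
Your proof is correct and follows essentially the same route as the paper: both expand $m\Vec{H}_{\mathcal{W}}=\mathcal{W}({\rm tr}_\Sigma B)+{\rm tr}_\Sigma(\nabla\mathcal{W})$ in an adapted orthonormal frame, split the ambient covariant derivatives into tangential and normal parts via the Gauss formula, and use $\mathcal{W}$-compatibility to identify $B(e_i,\mathcal{W}(e_i))$ and ${\rm div}^\Sigma(\mathcal{W})$. The only difference is cosmetic: the paper absorbs the cancellation by writing $\mathcal{W}(B(e_i,e_i))-\mathcal{W}(\nabla_{e_i}E_i)=-\mathcal{W}(\nabla^\Sigma_{e_i}E_i)$ directly, whereas you cancel $\mathcal{W}({\rm tr}_\Sigma B)$ explicitly at the end.
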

\begin{proof} Given $p\in \Sigma$ and $\{E_i\}_{i=1}^m$ a local orthonormal frame of $T\Sigma$ with $E_i(p)=e_i$,

\begin{equation}\label{def2HW}
\begin{aligned}
    m\,\Vec{H}_{\mathcal{W}}(p)=&\sum_{i=1}^m \left( \W(B(e_i,e_i))+(\nabla\mathcal{W})(e_i,e_i)\right)\\
    =& \sum_{i=1}^m \left(\W((\nabla_{e_i} E_i)^\bot)+ \nabla_{e_i} \W(E_i) -\W(\nabla_{e_i} E_i)\right)\\
    =& \sum_{i=1}^m \left( B(e_i,\W(e_i)) +\nabla^\Sigma_{e_i} \W(E_i)-\W(\nabla^\Sigma_{e_i} E_i)\right),
\end{aligned}
\end{equation}
and the result follows.
\end{proof}

\begin{remark}\label{independence_of_extension} We must remark here that given an immersed submanifold $\Sigma\subset M$ endowed with a conductivity $\mathcal{W}^\Sigma$, and given two independent extensions $\mathcal{W}_1, \mathcal{W}_2$ of $\mathcal{W}^\Sigma$ to $M$, the definition of $\W$-mean curvature vector given in Definition \ref{defWH} does not depend on the particular choice of $\mathcal{W}_1$ or $\mathcal{W}_2$ since $\vec{H}_{\mathcal{W}}$ can be written only in terms of $\mathcal{W}^\Sigma$ as it is proved in the above proposition.
\end{remark}

\begin{remark} If $\W={\rm Id}$, then $\Vec{H}_\W$ is the usual Riemannian mean curvature vector of $\Sigma$. In this vein, if $\W$ on the submanifold is just divergence-free, i.e.  ${\rm div}^\Sigma (\W)=0$, then $\Vec{H}_\W=\frac{1}{m}\sum_{i=1}^m  B(e_i,\W(e_i))$. This last vector, that is orthogonal to $\Sigma$, is used in \cite{Filho2023,grosjean2004a} and is called the generalized $\W$ mean curvature of $\Sigma$.\\

On the other hand, if $\Sigma^m$ is a $\W$-compatible submanifolds of $(M^n,g,\W)$ with $\W= e^f\cdot {\rm Id}$, then ${\rm tr}_\Sigma (\nabla \W)=e^f\,\nabla^\Sigma f$. Therefore,
\begin{equation}\label{rel:HW_weighted}
m\,\Vec{H}_\W=e^f(m\,\Vec{H}+\nabla^\Sigma f)= e^f(\Vec{H}_f+\nabla f),
\end{equation}
where $\Vec{H}_f=m \Vec{H}-(\nabla f)^\bot$ is the weighted mean curvature of $\Sigma$ in the weighted manifold $(M,g,e^f)$.
\end{remark}
\begin{remark} In order to obtain that the $\W$-Laplacian of the distance function coincides with the $\W$-mean curvature of the metric spheres (as it happens in the Riemannian and weighted settings), the definition of the $\W$-mean curvature vector $\Vec{H}_\W$ should be:
\begin{equation}\label{meanc}
m\,\Vec{H}_\W= \sum_{i=1}^m B(e_i,\mathcal{W}(e_i))+({\rm div}(\mathcal{W}))^\bot.
\end{equation}
But if we consider a conductivity $\W^\Sigma$ defined only on the submanifold $\Sigma$, then the expression \eqref{meanc}, depends on the particular extension of $\W^\Sigma$ to $M$. Notice that for a divergence-free conductivity, equation \eqref{meanc} coincides with our definition.
\end{remark}

\subsection{Bounding the $\W$-Laplacian of extrinsic modified distance functions} \label{subsec:HessianComp}
With the previous definition we can state the following expression for the $\W$-Laplacian of extrinsic modified radial functions $v=F\circ r$ restricted to $\Sigma$. {{As in Remark \ref{independence_of_extension}, if the submanifold $\Sigma\subset M$ is endowed with a conductivity $\mathcal{W}^\Sigma$, the following results are independent of the extension of $\W^\Sigma$ to $M$.}}

\begin{proposition}\label{prop:dua}
    Let  $(M^n,g,\mathcal{W})$  be a conductive Riemannian manifold, $\Sigma^m\subset M$ a $\mathcal{W}$-compatible immersed submanifold, and $r: M\to \mathbb{R}$ the distance from a point $o \in M$. Then, for any smooth function $F:(0,+\infty)\to \mathbb{R}$ and $p\in \Sigma-({\rm cut}(o)\cup\{o\})$ we have that
    $$
\begin{aligned}
    \Delta_\mathcal{W}^\Sigma (F\circ r)=&F''(r)\langle\mathcal{W}(\nabla^\Sigma r),\nabla^\Sigma r\rangle+F'(r)\sum_{i=1}^m{\rm Hess}\, r(e_i,\mathcal{W}(e_i))\\
    & +F'(r)\langle \nabla r,m\,\Vec{H}_\W)\rangle,
\end{aligned}
    $$
    where $\{e_i\}_{i=1}^m$ is an orthonormal basis of $T_p\Sigma$ and $\Vec{H}_\W$ is the $\W$-mean curvature vector field of $\Sigma$.
\end{proposition}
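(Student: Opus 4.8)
The plan is to compute $\Delta_\W^\Sigma(F\circ r)$ entirely \emph{intrinsically} on the conductive submanifold $(\Sigma, g|_\Sigma, \W|_\Sigma)$ using Proposition \ref{prop:LapWexpression}, and then to translate the purely intrinsic terms into the ambient quantities $\Hess\, r$ and $\Vec H_\W$ via the Gauss and Weingarten formulas. The key structural input is $\W$-compatibility: since $\W(T_p\Sigma)=T_p\Sigma$, for the chosen orthonormal basis $\{e_i\}_{i=1}^m$ of $T_p\Sigma$ we have $\W(e_i)\in T_p\Sigma$, which is exactly what lets several normal-component terms drop out of the inner products below.

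First I would apply Proposition \ref{prop:LapWexpression} to $(\Sigma, g|_\Sigma, \W|_\Sigma)$ and $v=F\circ r$, obtaining
\begin{equation*}
\Delta_\W^\Sigma v = \tr_\Sigma(\Hess_\W^\Sigma v) + \langle \nabla^\Sigma v,\ \Div^\Sigma(\W)\rangle,
\end{equation*}
where $\Hess_\W^\Sigma v(X,Y)=\langle \nabla^\Sigma_X\nabla^\Sigma v,\W(Y)\rangle$. Since $\nabla^\Sigma v = F'(r)\,\nabla^\Sigma r$ and $e_i(r)=\langle e_i,\nabla^\Sigma r\rangle$, the product rule gives $\nabla^\Sigma_{e_i}\nabla^\Sigma v = F''(r)\langle e_i,\nabla^\Sigma r\rangle\,\nabla^\Sigma r + F'(r)\,\nabla^\Sigma_{e_i}\nabla^\Sigma r$. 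Pairing with $\W(e_i)$ and summing, the $F''$-part collapses, using self-adjointness of $\W$, to $F''(r)\sum_i\langle e_i,\nabla^\Sigma r\rangle\langle\W(\nabla^\Sigma r),e_i\rangle = F''(r)\langle\W(\nabla^\Sigma r),\nabla^\Sigma r\rangle$, which is the first claimed term. Likewise $\langle\nabla^\Sigma v,\Div^\Sigma(\W)\rangle = F'(r)\langle\nabla r,\Div^\Sigma(\W)\rangle$ because $\Div^\Sigma(\W)$ is tangent to $\Sigma$.

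It then remains to identify $F'(r)\sum_i\langle\nabla^\Sigma_{e_i}\nabla^\Sigma r,\W(e_i)\rangle$. Here I would write $\nabla r = \nabla^\Sigma r + (\nabla r)^\perp$ and use the Gauss formula $\nabla_{e_i}\nabla^\Sigma r = \nabla^\Sigma_{e_i}\nabla^\Sigma r + B(e_i,\nabla^\Sigma r)$; pairing against the tangent vector $\W(e_i)$ kills $B(e_i,\nabla^\Sigma r)$, so $\langle\nabla^\Sigma_{e_i}\nabla^\Sigma r,\W(e_i)\rangle = \langle\nabla_{e_i}\nabla r,\W(e_i)\rangle - \langle\nabla_{e_i}(\nabla r)^\perp,\W(e_i)\rangle = \Hess\, r(e_i,\W(e_i)) - \langle\nabla_{e_i}(\nabla r)^\perp,\W(e_i)\rangle$. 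For the last term, the Weingarten formula and the identity $\langle A_\xi X,Y\rangle = \langle B(X,Y),\xi\rangle$ give $\langle\nabla_{e_i}(\nabla r)^\perp,\W(e_i)\rangle = -\langle B(e_i,\W(e_i)),(\nabla r)^\perp\rangle = -\langle B(e_i,\W(e_i)),\nabla r\rangle$. Hence $\langle\nabla^\Sigma_{e_i}\nabla^\Sigma r,\W(e_i)\rangle = \Hess\, r(e_i,\W(e_i)) + \langle B(e_i,\W(e_i)),\nabla r\rangle$.

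Finally I would assemble the pieces: the two remaining $F'(r)$-contributions combine into $F'(r)\big\langle\nabla r,\ \sum_{i=1}^m B(e_i,\W(e_i)) + \Div^\Sigma(\W)\big\rangle = F'(r)\langle\nabla r,\ m\,\Vec H_\W\rangle$ by the preceding proposition (the one identifying $m\,\Vec H_\W$ with $\sum_i B(e_i,\W(e_i)) + \Div^\Sigma(\W)$), while the $\Hess\, r$ terms give $F'(r)\sum_i\Hess\, r(e_i,\W(e_i))$, yielding the stated formula; the independence from the chosen extension of $\W^\Sigma$ is then automatic since the final expression involves only $\W^\Sigma$, $B$, and $\Vec H_\W$, per Remark \ref{independence_of_extension}. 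The main point requiring care is the bookkeeping of tangential/normal splittings — making sure $\W$-compatibility is invoked precisely where it is needed so that the cross terms with the second fundamental form either vanish or coalesce correctly into $\Vec H_\W$; there is no deep obstacle beyond this.
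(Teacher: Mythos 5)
Your proof is correct, but it is organized differently from the paper's. The paper expands $\Delta^\Sigma_\W(F\circ r)=\sum_i\langle\nabla^\Sigma_{e_i}\W(F'(r)\nabla^\Sigma r),e_i\rangle$ directly, performs the tangential/normal splitting \emph{inside} the conductivity via $\W(\nabla^\Sigma r)=\W(\nabla r)-\W((\nabla r)^\perp)$, so the second fundamental form enters as $\sum_i\langle\W(\nabla r),B(e_i,e_i)\rangle$, and it then recognizes $m\vec H_\W$ through the primary Definition \ref{defWH}, $m\vec H_\W=\W(\tr_\Sigma B)+\tr_\Sigma(\nabla\W)$. You instead invoke the intrinsic formula of Proposition \ref{prop:LapWexpression} on the conductive submanifold $(\Sigma,g|_\Sigma,\W|_\Sigma)$, convert the intrinsic $\W$-Hessian of $r|_\Sigma$ to ambient quantities via Gauss and Weingarten (so $B$ enters as $\sum_i\langle B(e_i,\W(e_i)),\nabla r\rangle$), and then identify $m\vec H_\W$ through the \emph{secondary} characterization $m\vec H_\W=\sum_i B(e_i,\W(e_i))+\Div^\Sigma(\W)$ proved right after Definition \ref{defWH}. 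Both decompositions are legitimate and use $\W$-compatibility in the same essential places (to keep $\W(e_i)$ tangent and $\W((\nabla r)^\perp)$ normal); your route buys the reuse of an already-established intrinsic identity and makes the appearance of $\Div^\Sigma(\W)$ and the independence of the normal extension of $\W^\Sigma$ transparent from the outset, while the paper's route is more self-contained and arrives at the definitionally given form of $\vec H_\W$ without passing through its reformulation. The only point you state without comment is that Proposition \ref{prop:LapWexpression} applies to $(\Sigma,g|_\Sigma,\W|_\Sigma)$ at all, which requires the observation (made in the paper after Definition \ref{def:CompatibleConduc}) that a $\W$-compatible submanifold is itself a conductive Riemannian manifold; this is immediate, so there is no gap.
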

\begin{proof} Let $p\in \Sigma- \{o\}$ and $\{E_i\}_{i=1}^m$ a local orthonormal frame of $T\Sigma$ with $E_i(p)=e_i$,
$$
        \begin{aligned}
            \Delta_\mathcal{W}^\Sigma (F\circ r)=&\sum_{i=1}^m\langle \nabla^\Sigma_{e_i}\mathcal{W}(F'(r)\nabla^\Sigma r), e_i\rangle=\sum_{i=1}^m\langle \nabla^\Sigma_{e_i}(F'(r)\mathcal{W}(\nabla^\Sigma r)), e_i\rangle\\
            =&F''(r)\sum_{i=1}^m\langle \nabla^\Sigma r, e_i\rangle\langle \mathcal{W}(\nabla^\Sigma r), e_i\rangle+F'(r)\sum_{i=1}^m\langle \nabla^\Sigma_{e_i}\mathcal{W}(\nabla^\Sigma r), e_i\rangle\\
            =&F''(r)\langle\mathcal{W}(\nabla^\Sigma r),\nabla^\Sigma r\rangle+F'(r)\left(\sum_{i=1}^m\langle \nabla_{e_i}\mathcal{W}(\nabla r),e_i\rangle\right.\\
            &+\left.\sum_{i=1}^m\langle \mathcal{W}(\nabla r),B(e_i,e_i)\rangle\right),
        \end{aligned}
  $$
  where we have used that $\nabla^\Sigma r=\nabla r -(\nabla r)^\bot$, and that
  $$\langle \nabla_{e_i}\mathcal{W}((\nabla r)^\bot), e_i\rangle=-\langle \mathcal{W}((\nabla r)^\bot),\nabla_{e_i} E_i\rangle=-\langle \mathcal{W}(\nabla r),(\nabla_{e_i} E_i)^\bot\rangle.$$
  
 Moreover, 
    $$
    \begin{aligned}
\langle \nabla_{e_i}\mathcal{W}(\nabla r),e_i\rangle=&E_i\langle\mathcal{W}(\nabla r),E_i\rangle-\langle \mathcal{W}(\nabla r),\nabla_{e_i} E_i\rangle\\
=&E_i\langle\nabla r,\mathcal{W}(E_i)\rangle-\langle \nabla r,\mathcal{W}(\nabla_{e_i} E_i)\rangle\\
=&\langle \nabla_{e_i}\nabla r,\mathcal{W}(e_i)\rangle+\langle \nabla r,\nabla_{e_i}\mathcal{W}(E_i)\rangle\\
&-\langle \nabla r,\mathcal{W}(\nabla_{e_i} E_i)\rangle\\
=&\langle \nabla_{e_i}\nabla r,\mathcal{W}(e_i)\rangle+\langle \nabla r,\nabla_{e_i}\mathcal{W}(E_i)-\mathcal{W}(\nabla_{e_i} E_i)\rangle\\
=&\langle \nabla_{e_i}\nabla r,\mathcal{W}(e_i)\rangle+\langle \nabla r,\nabla\mathcal{W}(e_i,e_i)\rangle,
    \end{aligned}
    $$
    and the result then follows by Definition \ref{defWH}.
 \end{proof}

 \begin{theorem}\label{mainteo}Let $(M,g,\mathcal{W})$ be a conductive Riemannian manifold, $\Sigma^m \subset M$ a $\W$-compatible immersed submanifold, $r:M\to \mathbb{R}$ the distance function from a point $o\in M$, and $w(s)$ a smooth function such that $w(0)=0$, $w'(0)=1$ and $w(s)>0$ for all $s>0$. Suppose that for any $p\in \Sigma-({\rm cut}(o) \cup \{o\})$ and any plane $\sigma_p\subset T_p M$ containing $(\nabla r)_p$,
$$
{\rm sec}(\sigma_p)\leq (\geq) -\frac{w''(r(p))}{w(r(p))}.
$$
 Then, for every smooth function $F:(0,+\infty)\to \mathbb{R}$ with $F'\leq 0$ we have that the restriction of $F\circ r$ to $\Sigma$ satisfies the following corresponding inequalities:
$$
\begin{aligned}
    \Delta_\mathcal{W}^\Sigma (F\circ r)\leq (\geq)&\left(F''(r)-F'(r)\frac{w'(r)}{w(r)}\right) \langle\mathcal{W}(\nabla^\Sigma r),\nabla^\Sigma r\rangle+{\rm tr}_\Sigma (\mathcal{W})\, F'(r)\frac{w'(r)}{w(r)}\\
    & +m\,F'(r)\langle \Vec{H}_\mathcal{W},\nabla r\rangle,
\end{aligned}
$$
at every point in $\Sigma-({\rm cut}(o) \cup \{o\})$.
\end{theorem}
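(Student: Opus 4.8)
\emph{Proof proposal.} The plan is to combine the exact identity from Proposition \ref{prop:dua} with the Greene--Wu Hessian comparison (Theorem \ref{teo:GreeneWu}), exactly mirroring the proof of the intrinsic statement in Proposition \ref{ineq1intrinsic}, with the only new bookkeeping coming from the splitting $\nabla r = \nabla^\Sigma r + (\nabla r)^\perp$. First I would fix $p\in\Sigma-({\rm cut}(o)\cup\{o\})$ and, using that $\Sigma$ is $\W$-compatible (so that $\mathcal{W}$ preserves $T_p\Sigma$ and its restriction there is self-adjoint and positive definite), choose an orthonormal basis $\{e_i\}_{i=1}^m$ of $T_p\Sigma$ that diagonalizes $\mathcal{W}\vert_{T_p\Sigma}$, say $\mathcal{W}(e_i)=\lambda_i e_i$ with $\lambda_i>0$. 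Then Proposition \ref{prop:dua} reads
$$
\Delta_\mathcal{W}^\Sigma(F\circ r)=F''(r)\langle\mathcal{W}(\nabla^\Sigma r),\nabla^\Sigma r\rangle+F'(r)\sum_{i=1}^m\lambda_i\,{\rm Hess}\,r(e_i,e_i)+m\,F'(r)\langle\nabla r,\Vec{H}_\mathcal{W}\rangle ,
$$
so everything reduces to bounding the middle term.

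Next I would apply Theorem \ref{teo:GreeneWu} with $v=e_i$: the curvature hypothesis gives ${\rm Hess}\,r(e_i,e_i)\geq(\leq)\frac{w'(r)}{w(r)}\big(1-\langle\nabla r,e_i\rangle^2\big)$. Multiplying by $\lambda_i>0$ preserves the inequality, and summing over $i$ yields
$$
\sum_{i=1}^m\lambda_i\,{\rm Hess}\,r(e_i,e_i)\geq(\leq)\frac{w'(r)}{w(r)}\Big(\,{\rm tr}_\Sigma(\mathcal{W})-\sum_{i=1}^m\lambda_i\langle\nabla r,e_i\rangle^2\Big).
$$
The key algebraic simplification is that the last sum telescopes: since $e_i\in T_p\Sigma$ one has $\langle\nabla r,e_i\rangle=\langle\nabla^\Sigma r,e_i\rangle$, hence $\sum_i\lambda_i\langle\nabla r,e_i\rangle^2=\sum_i\langle\nabla^\Sigma r,\mathcal{W}(e_i)\rangle\langle\nabla^\Sigma r,e_i\rangle=\sum_i\langle\mathcal{W}(\nabla^\Sigma r),e_i\rangle\langle\nabla^\Sigma r,e_i\rangle=\langle\mathcal{W}(\nabla^\Sigma r),\nabla^\Sigma r\rangle$, using self-adjointness of $\mathcal{W}$ on $T_p\Sigma$ and $\mathcal{W}(\nabla^\Sigma r)\in T_p\Sigma$. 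Therefore
$$
\sum_{i=1}^m\lambda_i\,{\rm Hess}\,r(e_i,e_i)\geq(\leq)\frac{w'(r)}{w(r)}\big({\rm tr}_\Sigma(\mathcal{W})-\langle\mathcal{W}(\nabla^\Sigma r),\nabla^\Sigma r\rangle\big).
$$

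Finally, since $F'(r)\leq 0$, multiplying this inequality by $F'(r)$ reverses its direction; substituting into the displayed identity from Proposition \ref{prop:dua} and regrouping the coefficient of $\langle\mathcal{W}(\nabla^\Sigma r),\nabla^\Sigma r\rangle$ gives precisely the asserted bound. I do not expect a serious obstacle here: the argument is a direct transplantation of the intrinsic proof, and the only point requiring care is the use of $\W$-compatibility to diagonalize $\mathcal{W}$ tangentially together with the observation that the weighted sum of $\langle\nabla r,e_i\rangle^2$ collapses to $\langle\mathcal{W}(\nabla^\Sigma r),\nabla^\Sigma r\rangle$ rather than involving the full ambient gradient $\nabla r$.
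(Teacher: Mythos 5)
Your proposal is correct and follows essentially the same route as the paper: the paper likewise diagonalizes $\W$ on $T_p\Sigma$ (using $\W$-compatibility), applies the Greene--Wu comparison to bound $\sum_i\operatorname{Hess}r(e_i,\W(e_i))$ by $\frac{w'(r)}{w(r)}\bigl(\operatorname{tr}_\Sigma(\W)-\langle\W(\nabla^\Sigma r),\nabla^\Sigma r\rangle\bigr)$, and then combines this with Proposition \ref{prop:dua} and the sign of $F'$. Your explicit verification that the weighted sum collapses to $\langle\W(\nabla^\Sigma r),\nabla^\Sigma r\rangle$ is exactly the step the paper delegates to the proof of Proposition \ref{ineq1intrinsic}.
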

\begin{proof} Let $p \in \Sigma-({\rm cut}(o) \cup \{o\})$ and $\{e_i\}_{i=1}^m$ an orthonormal frame of $T_p \Sigma$ such that $\W(e_i)=\lambda_i\, e_i$. Then, as in the proof of Proposition \ref{ineq1intrinsic}, we obtain that

\begin{equation}
 \sum_{i=1}^m{\rm Hess}\, r(e_i,\mathcal{W}(e_i))\geq (\leq) \frac{w'(r)}{w(r)}\left({\rm tr}_\Sigma (\mathcal{W})- \langle \mathcal{W}(\nabla^\Sigma r),\nabla^\Sigma r\rangle \right).
\end{equation}

This expression, in combination with Proposition \ref{prop:dua}, gives us the result.
\end{proof}

\subsection{$\W$-parabolicity or $\W$-hyperbolicity of submanifolds}

In this subsection, given a conductive Riemannian manifold $(M^n,g,\W)$ with a pole $o$, and a noncompact properly immersed $\W$-compatible submanifold $\Sigma^m\subset M$, we derive bounds for the $\W$-capacity of extrinsic balls $D_R$ of radius $R$ using the inequalities for the conductivity Laplacian of modified distance functions given in Theorem \ref{mainteo}. As a consequence, we obtain $\W$-parabolicity and $\W$-hyperbolicity criteria for such submanifolds $\Sigma^m$.

\begin{theorem} \label{thm:extrinsic2}  Let $(M^n,g,\mathcal{W})$ be a conductive Riemannian manifold, $\Sigma^m$ a noncompact $\mathcal{W}$-compatible submanifold properly immersed in $M$, $r:M\to \mathbb{R}$ the distance function from the pole $o$ in $M$, and $w(s)$ a smooth function such that $w(0)=0$, $w'(0)=1$ and $w(s)>0$ for all $s>0$. Suppose that the following conditions are fulfilled:

\begin{itemize}

\item[(a) ] For any $p\in M- \{o\}$ and any plane $\sigma_p \subset T_p M$ containing $(\nabla r)_p$,
\begin{displaymath}
{\rm sec}(\sigma_p) \leq (\geq) - \frac{w''(r(p))}{w(r(p))}.
\end{displaymath}

\item[(b) ] There exist numbers $\rho,\,q>0$, and a continuous function $\theta:[\rho,+\infty)\to \mathbb{R}$ such that $\partial D_\rho$ is smooth and in $\Sigma - D_\rho$:
\begin{enumerate}
\item  $w'(r)\bigg({\rm tr}_\Sigma(\mathcal{W})- q\,\langle\mathcal{W}(\nabla^\Sigma r), \nabla^\Sigma r\rangle\bigg) \geq (\leq)\,\, 0$,
\item $m\langle\Vec{H}_{\mathcal{W}}, \nabla r\rangle \geq (\leq)\,\,\theta(r)\,\langle\mathcal{W}(\nabla^\Sigma r),\nabla^\Sigma r\rangle$.
\end{enumerate}
\end{itemize}

Then for all $R> \rho$ with $\partial D_R$ smooth,
\begin{equation}\label{comp_q_ext}
{\rm Cap}^\Sigma_\mathcal{W}(D_\rho,D_R)\geq (\leq) 
-\phi'_{\rho,R,h,q}(\rho)\int_{\partial D_\rho} \langle \mathcal{W}(\nabla^\Sigma r), \frac{\nabla^\Sigma r}{\Vert\nabla^\Sigma r\Vert}\rangle\,d\mu_{\partial D_\rho},
\end{equation}
\noindent where $\phi_{\rho,R,h,q}$ is the function given in equation \eqref{fsolmodel} for $n=q$:

\begin{equation}
\phi_{\rho,R,h,q}(r):=\left(\int_r^R w^{1-q}
(s)\,e^{-h(s)}\,ds\right)\,\left(\int_\rho^R w^{1-q}
(s)\,e^{-h(s)}\,ds\right)^{-1},
\end{equation}
with $h(t)=\int_{\rho}^t\theta(s)\,ds$ for $t\geq \rho$.\\
Moreover, if

\begin{equation}\label{modinfty}
\int_\rho^\infty w^{1-q}(s)\,{\rm e}^{-h(s)}ds < (=)\infty,
\end{equation}

then $\Sigma$ is $\mathcal{W}$-hyperbolic (resp. $\W$-parabolic).
\end{theorem}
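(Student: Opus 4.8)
The plan is to run the proof of Theorem~\ref{thm:Rp} essentially verbatim, with Proposition~\ref{ineq1intrinsic} replaced by its submanifold counterpart Theorem~\ref{prop:dua}/Theorem~\ref{mainteo}, and the intrinsic metric annuli $B_R-\widebar{B}_\rho$ replaced by the extrinsic ones $D_R-\widebar{D}_\rho$ inside the conductive Riemannian submanifold $(\Sigma,g|_\Sigma,\W^\Sigma)$. Since $M$ has a pole, the ambient distance $r$ is smooth on $M-\{o\}$, so $r|_\Sigma$ is smooth on $\Sigma-\{o\}$ and there is no cut locus to worry about; by Remark~\ref{independence_of_extension} everything below is independent of the chosen extension of $\W^\Sigma$.

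First I would fix $q>0$, set $h(t)=\int_\rho^t\theta(s)\,ds$, and take as comparison map the transplanted radial function $v:=\phi_{\rho,R,h,q}\circ r$ on $\widebar{D}_R-D_\rho$, where $\phi_{\rho,R,h,q}$ is \eqref{fsolmodel} with $n$ replaced by $q$. For $r\geq\rho$ one has $\phi'_{\rho,R,h,q}\leq 0$ together with $\phi''_{\rho,R,h,q}-\phi'_{\rho,R,h,q}\,w'/w=-\phi'_{\rho,R,h,q}\,(q\,w'/w+\theta)$, which is just \eqref{eq:sabika} read with $h'=\theta$. Substituting $F=\phi_{\rho,R,h,q}$ into Theorem~\ref{mainteo} (using hypothesis (a)) and regrouping, the right-hand side collapses to
\[
\phi'_{\rho,R,h,q}(r)\,\frac{w'(r)}{w(r)}\Big({\rm tr}_\Sigma(\W)-q\,\langle\W(\nabla^\Sigma r),\nabla^\Sigma r\rangle\Big)+\phi'_{\rho,R,h,q}(r)\Big(m\langle\Vec{H}_\W,\nabla r\rangle-\theta(r)\,\langle\W(\nabla^\Sigma r),\nabla^\Sigma r\rangle\Big).
\]
Because $\phi'_{\rho,R,h,q}\leq 0$ and $w>0$, condition (1) (resp. (2)) of (b) makes each of the two summands $\leq 0$ (resp. $\geq 0$), so on $\Sigma-D_\rho$ we get $\Delta^\Sigma_\W v\leq(\geq)\,0=\Delta^\Sigma_\W u$, where $u$ solves the Dirichlet problem \eqref{eqWLap} for $\W^\Sigma$ on $D_R-\widebar{D}_\rho$ with $u=1$ on $\partial D_\rho$ and $u=0$ on $\partial D_R$.

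Next I would apply the maximum principle Theorem~\ref{Hopf} on $\Sigma$ to $-v$ and $-u$ (resp. $v$ and $u$), obtaining $u\leq(\geq)v$ on $D_R-\widebar{D}_\rho$ and the boundary gradient inequality $\langle\W(\nabla v),n_{\partial D_\rho}\rangle\leq(\geq)\langle\W(\nabla u),n_{\partial D_\rho}\rangle$ along $\partial D_\rho$. Since $v$ is non-increasing in $r$, $v=1$ on $\partial D_\rho$, and $\partial D_\rho$ is smooth (so $\rho$ is a regular value of $r|_\Sigma$ and $\nabla^\Sigma r\neq 0$ there), the outward unit normal of $D_R-\widebar{D}_\rho$ is $n_{\partial D_\rho}=\nabla v/\Vert\nabla v\Vert=-\nabla^\Sigma r/\Vert\nabla^\Sigma r\Vert$, and the flux formula for the $\W^\Sigma$-capacity yields \eqref{comp_q_ext} just as in the intrinsic case. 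Finally, letting $R\to\infty$ along regular values of $r|_\Sigma$ (available by Sard, while properness of the immersion guarantees the $D_R$ are precompact and exhaust $\Sigma$), the prefactor $-\phi'_{\rho,R,h,q}(\rho)=w^{1-q}(\rho)e^{-h(\rho)}\big(\int_\rho^R w^{1-q}e^{-h}\big)^{-1}$ tends to a strictly positive limit when \eqref{modinfty} holds with ``$<$'' and to $0$ when it holds with ``$=$''; since $\int_{\partial D_\rho}\langle\W(\nabla^\Sigma r),\nabla^\Sigma r\rangle/\Vert\nabla^\Sigma r\Vert\,d\mu_{\partial D_\rho}>0$ by positive definiteness of $\W$, in the first case \eqref{comp_q_ext} gives ${\rm Cap}^\Sigma_\W(D_\rho)>0$, i.e. $\Sigma$ is $\W$-hyperbolic, and in the second case it gives ${\rm Cap}^\Sigma_\W(D_\rho)\leq 0$, hence $=0$, i.e. $\Sigma$ is $\W$-parabolic, by Theorem~\ref{theorGrig}.

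The only point that needs genuine care beyond the intrinsic argument is the role of $\nabla^\Sigma r$: it is merely the tangential projection of the unit vector $\nabla r$, hence not unit-length and possibly vanishing at interior critical points of $r|_\Sigma$. One must check that the differential inequality of the second paragraph survives at such points — it does, since the terms controlled by hypotheses (b) carry a factor $\langle\W(\nabla^\Sigma r),\nabla^\Sigma r\rangle$ that vanishes there — and that $\partial D_\rho$, $\partial D_R$ may be chosen smooth so that the boundary flux integrals make sense and the integrand $\langle\W(\nabla^\Sigma r),\nabla^\Sigma r\rangle/\Vert\nabla^\Sigma r\Vert$ is positive on $\partial D_\rho$; both are delivered by Sard's theorem. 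All remaining manipulations are the same algebra as in Theorem~\ref{thm:Rp}, with ${\rm tr}$ replaced by ${\rm tr}_\Sigma$ and with $\Div(\W)$ absorbed into $m\Vec{H}_\W$ through Proposition~\ref{prop:dua}.
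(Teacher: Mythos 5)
Your proposal is correct and follows essentially the same route as the paper: transplant $\phi_{\rho,R,h,q}\circ r$ to the extrinsic annulus, use Theorem \ref{mainteo} together with the ODE \eqref{eq:sabika} and hypotheses (a), (b) to compare with the $\W$-harmonic solution via Theorem \ref{Hopf}, read off the capacity as a boundary flux, and let $R\to\infty$ using Sard's theorem and the positive definiteness of $\W$ to conclude the type. The only cosmetic difference is your parenthetical justification at interior critical points of $r|_\Sigma$ (the first summand does not in fact vanish there, but its sign is supplied directly by hypothesis (b)(1), so nothing is lost).
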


\begin{proof}
Given $q \in \mathbb{R}_+$, let $\phi_{\rho,R,h,q}$ be the function given in equation \eqref{fsolmodel} for $n=q$, with $h(t)=\int_{\rho}^t\theta(s)ds$ for any $t\geq \rho$. Consider the transplanted radial function $v:=\phi_{\rho,R,h,q}\circ r$ defined on the extrinsic annulus $D_R- \widebar{D}_\rho \subseteq \Sigma$. For $r\geq \rho$, we have that  $\phi_{\rho,R,h,q}'(r)\leq 0$ and 
\begin{equation}\label{v''ext}
\begin{aligned}
\phi_{\rho,R,h,q}''(r)&=\phi_{\rho,R,h,q}'(r)\bigg(-\theta(r)+(1-q)\frac{w'(r)}{w(r)}\bigg),\\
\end{aligned}
\end{equation}
since $\phi_{\rho,R,h,q}$ is solution of the differential equation \eqref{eq:sabika}. We apply Theorem \ref{mainteo}, equation \eqref{v''ext} and hypotheses (a) and (b) to obtain

\begin{eqnarray*}
\begin{aligned}
\Delta^\Sigma_\mathcal{W}\,v&\leq (\geq) \bigg(\phi_{\rho,R,h,q}''(r)-\phi_{\rho,R,h,q}'(r)\frac{w'(r)}{w(r)}\bigg)\langle\mathcal{W}(\nabla^\Sigma r),\nabla^\Sigma r\rangle\\
&+{\rm tr}_\Sigma(\mathcal{W})\, \phi_{\rho,R,h,q}'(r)\frac{w'(r)}{w(r)}+m \,\phi_{\rho,R,h,q}'(r) \langle\Vec{H}_{\mathcal{W}}, \nabla r\rangle\\
&=\phi_{\rho,R,h,q}'(r)\frac{w'(r)}{w(r)}\bigg({\rm tr}_\Sigma(\mathcal{W}) -q\,\langle\mathcal{W}(\nabla^\Sigma r),\nabla^\Sigma r\rangle\bigg)+\\&\phi_{\rho,R,h,q}'(r)\bigg(m \langle\Vec{H}_{\mathcal{W}}, \nabla r\rangle-\theta(r)\langle\mathcal{W}(\nabla^\Sigma r),\nabla^\Sigma r\rangle\bigg) \leq (\geq)\, 0=\Delta^\Sigma_\mathcal{W}\,u,
\end{aligned}
\end{eqnarray*}
where $u$ is the solution to the Dirichlet problem \eqref{eqWLap} in $D_R -\widebar{D}_\rho$.\\ 
We apply Theorem \ref{Hopf} to the $\mathcal{W}$-subharmonic $-v$ (resp. $v$) and the $\W$-harmonic function $-u$ (resp. $u$) to obtain that $u\leq (\geq) v$ on $D_R-\widebar{D}_\rho$, and that 
\begin{equation}\label{eqgrad2}
\langle \mathcal{W}(\nabla^\Sigma v), n_{\partial D_\rho}\rangle \leq (\geq)\langle \mathcal{W}(\nabla^\Sigma u), n_{\partial D_\rho}\rangle,
\end{equation}
\noindent where $n_{\partial D_\rho}$ denotes the unit outward, (pointing outward $D_R-\widebar{D}_\rho$), normal vector along $\partial D_\rho$. Hence, as $v$ is non-increasing with respect $r$ and $v=1$ on $\partial D_\rho$, $n_{\partial D_\rho}=\nabla^\Sigma v/\Vert\nabla^\Sigma v\Vert$, and by the definition of $\mathcal{W}$-capacity, 
\begin{eqnarray*}
{\rm Cap}^\Sigma_\mathcal{W}(D_\rho, D_R)&=&\int_{\partial D_\rho} \langle \mathcal{W}(\nabla^\Sigma u), n_{\partial D_\rho}\rangle\, d\mu_{\partial D_\rho}\\
&\geq(\leq)& \int_{\partial D_\rho} \langle\mathcal{W}(\nabla^\Sigma v), \frac{\nabla^\Sigma v}{\Vert\nabla^\Sigma v\Vert}\rangle\,d\mu_{\partial D_\rho}\\
&=&-\phi'_{\rho,R,h,q}(\rho)\int_{\partial D_\rho} \langle \mathcal{W}(\nabla^{\Sigma} r), \frac{\nabla^{\Sigma} r}{\Vert\nabla^{\Sigma} r\Vert}\rangle\,d\mu_{\partial D_\rho}.\\
\end{eqnarray*}
By using Sard's Theorem we can suppose that $\Vert \nabla^\Sigma r\Vert \neq 0$ on $\partial D_\rho$, and for all $p\in \partial D_\rho$ we have that,
\begin{equation}\label{boundsWnablar}
 0<\mu(p)\,\Vert\nabla^{\Sigma} r\Vert \leq \langle \mathcal{W}(\nabla^{\Sigma} r), \frac{\nabla^{\Sigma} r}{\Vert\nabla^{\Sigma} r\Vert}\rangle \leq \kappa(p)\,\Vert\nabla^{\Sigma} r\Vert,
\end{equation}
for some positive functions $\mu$ and $\kappa$ in {$C^\infty(M)$}. Then, since $\partial D_\rho$ is compact, 
\begin{equation}
\int_{\partial D_\rho} \langle \mathcal{W}(\nabla^{\Sigma} r), \frac{\nabla^{\Sigma} r}{\Vert\nabla^{\Sigma} r\Vert}\rangle\,d\mu_{\partial D_\rho}>0.
\end{equation}
Taking limits when $R\to +\infty$, if \eqref{modinfty} is fulfilled then, we have that
\begin{equation}
    {\rm Cap}^\Sigma_\mathcal{W}(D_\rho)> (=)\,0,
\end{equation}
and $\Sigma$ is $\mathcal{W}$-hyperbolic (resp. $\W$-parabolic).
\end{proof}

\begin{remark}\label{qN}
When $q \in \mathbb{N}$, we consider the $q$-dimensional weighted model space $(M^q_w,g_w,e^{h(r_w)})$ with $h(t)=\int_{\rho}^t\theta(s)\,ds$ for $t\geq \rho$. Then, using \eqref{fcapacitymodel}, equation \eqref{comp_q_ext} reads 
\begin{equation} \label{eq:CapComp1ext}
\begin{aligned}
{\rm Cap}^\Sigma_\mathcal{W}(D_\rho,D_R)\geq (\leq) \frac{{\C}^h(B_\rho, B_R)}{{\rm Vol}_h(\partial B^w_\rho)} \int_{\partial D_\rho} \langle \mathcal{W}(\nabla^\Sigma r), \frac{\nabla^\Sigma r}{\Vert\nabla^\Sigma r\Vert}\rangle\,d\mu_{\partial D_\rho}.
\end{aligned}
\end{equation}

Moreover, if  $M^q_w$ is $h$-hyperbolic (resp. $h$-parabolic),
 then $\Sigma^m$ is $\mathcal{W}$-hyperbolic (resp. $\W$-parabolic).\\

 Using the inequalities in \eqref{boundsWnablar} and the fact that $\Vert\nabla^\Sigma r\Vert \leq 1$, the hypotheses in Theorem \ref{thm:extrinsic2} implies that,

 \begin{equation} \label{eq:CapComp1extmod}
\begin{aligned}
{\rm Cap}^\Sigma_\mathcal{W}(D_\rho,D_R)\geq &(\leq) \,C(\rho)\,\frac{{\C}^h(B_\rho, B_R)}{{\rm Vol}_h(\partial B^w_\rho)} \int_{\partial D_\rho} \Vert\nabla^\Sigma r\Vert\,d\mu_{\partial D_\rho}\\
&(\leq) \,C(\rho)\,\frac{{\C}^h(B_\rho, B_R)\,{\rm Vol}(\partial D_\rho)}{{\rm Vol}_h(\partial B^w_\rho)},
\end{aligned}
\end{equation}

where $C(\rho)={\rm min}\{\mu(p):\,p\in \partial D_\rho\}$ (resp. $C(\rho)={\rm max}\{\kappa(p):\,p\in \partial D_\rho\}$).
\end{remark}
\medskip

{As in the intrinsic case, using that for all $p\in \Sigma$ and all $v\in T_p \Sigma$, $\mu(p)\,\Vert v\Vert^2 \leq \langle \W(v),v \rangle\leq \kappa(p)\,\Vert v\Vert^2$ for some smooth positive functions $\mu$ and $\kappa$, we can state the following results}.

\begin{theorem}\label{thm:extrinsicr} Let $(M,g,\mathcal{W})$ be a conductive Riemannian manifold, $\Sigma^m$ a non-compact $\mathcal{W}$-compatible submanifold properly immersed in $M$, $r:M\to \mathbb{R}$ the distance function from the pole $o$ in $M$, and $w(s)$ a smooth function such that $w(0)=0$, $w'(0)=1$ and $w(s)>0$ for all $s>0$. Let $\kappa$ be as in Definition \ref{defConducRiem} and suppose that the following conditions are fulfilled:

\begin{itemize}

\item[a) ] For any $p\in M-\{o\}$ and any plane $\sigma_p \subset T_p M$ containing $(\nabla r)_p$,
\begin{displaymath}
{\rm sec}(\sigma_p) \geq (\leq) - \frac{w''(r(p))}{w(r(p))}.
\end{displaymath}

\item[b) ] There exists $\rho>0$ and continuous function $\theta:[\rho,+\infty)\to \mathbb{R}$ such that $\partial D_\rho$ is smooth and for all $p\in \Sigma - D_\rho$:
\begin{enumerate}

\item m\,$\langle \nabla r, \Vec{H}_\W \rangle \leq (\geq) \theta(r(p))\, \kappa(p)$ and $w(r(p))'\geq (\leq) \,\,0$,
\item $m\, w'(r(p))/w(r(p))+\theta(r(p)) \leq (\geq)\,\,0$ (balance condition).
\end{enumerate}
\end{itemize}

Then for all $R> \rho$ with $\partial D_R$ smooth,
\begin{equation} \label{eq:CapCompextrinsicr}
\begin{aligned}
{\rm Cap}^\Sigma_\mathcal{W}(D_\rho,D_R)\leq &(\geq) \,C(\rho)\,\frac{{\C}^h(B^w_\rho, B^w_R)}{{\rm Vol}_h(\partial B^w_\rho)} \int_{\partial D_\rho} \Vert\nabla^\Sigma r\Vert\,d\mu_{\partial D_\rho}\\
\leq & \,C(\rho)\,\frac{{\C}^h(B^w_\rho, B^w_R)\,{\rm Vol}(\partial D_\rho)}{{\rm Vol}_h(\partial B^w_\rho)},
\end{aligned}
\end{equation}
where $C(\rho)={\rm max}\{\kappa(p):\,p\in \partial D_\rho\}$ (resp. $C(\rho)={\rm min}\{\mu(p):\,p\in \partial D_\rho\}$), ${\rm Cap}^h(B^w_\rho,B^w_R)$ is the weighted capacity of a metric annulus in the weighted model space $(M^m_w, g_w,e^{h(r_w)})$ with $h(t)=\int_{\rho}^t \theta(s)\,ds$ for any $t\geq \rho$, and ${\rm Vol}_h$ is the corresponding weighted volume in $M_w^m$.\\

Moreover, {since $\int_\rho^\infty w^{1-m}(s)\,e^{-h(s)}ds =\infty$, then $\Sigma$ is $\mathcal{W}$-parabolic (resp. if $\int_\rho^\infty w^{1-m}(s)\,e^{-h(s)}ds <\infty$, then $\Sigma$ is $\mathcal{W}$-hyperbolic)}.

\end{theorem}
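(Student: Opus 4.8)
The plan is to run the argument of Theorem~\ref{thm:extrinsic2} with the \emph{submanifold} dimension $m$ in the role of the free parameter there, while replacing the sharp quantities $\langle\W(\nabla^\Sigma r),\nabla^\Sigma r\rangle$ and ${\rm tr}_\Sigma(\W)$ by the coarser bounds $\langle\W(\nabla^\Sigma r),\nabla^\Sigma r\rangle\leq\kappa\,\Vert\nabla^\Sigma r\Vert^{2}\leq\kappa$ and ${\rm tr}_\Sigma(\W)\leq m\,\kappa$ --- exactly as Theorem~\ref{propIntrinsicA2} does in the intrinsic setting. Fix $R>\rho$ with $\partial D_R$ smooth, put $h(t)=\int_\rho^t\theta(s)\,ds$, let $\phi:=\phi_{\rho,R,h,m}$ be the function of \eqref{fsolmodel} with $n=m$, and transplant it to the extrinsic annulus via $v:=\phi\circ r$ on $D_R-\widebar{D}_\rho$. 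Since $\phi$ solves \eqref{eq:sabika} with $n=m$, we have $\phi'\leq0$ on $[\rho,R]$ and $\phi''(r)-\phi'(r)\frac{w'(r)}{w(r)}=-\phi'(r)\bigl(m\frac{w'(r)}{w(r)}+\theta(r)\bigr)$. Note that only $\Vec{H}_\W$ and ${\rm tr}_\Sigma(\W)$ will enter, so the argument does not depend on the chosen extension of $\W^\Sigma$.

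I would then feed $F=\phi$ into Theorem~\ref{mainteo}, whose hypothesis is precisely condition~(a), to obtain a one-sided estimate of $\Delta^\Sigma_\W v$ by three terms. Substituting, term by term, the balance condition~(2), the sign of $w'$ in~(1), the $\W$-mean curvature condition~(1) (namely $m\langle\Vec{H}_\W,\nabla r\rangle\leq(\geq)\theta\,\kappa$), $\phi'\leq0$, and the two conductivity bounds above, each term becomes a multiple of $\kappa$ whose coefficients add to $-\phi'\bigl(m\frac{w'}{w}+\theta\bigr)+m\phi'\frac{w'}{w}+\phi'\theta=0$; hence $\Delta^\Sigma_\W v\geq0$ (resp. $\leq0$) on $D_R-\widebar{D}_\rho$, i.e. $v$ is $\W$-subharmonic (resp. $\W$-superharmonic) there, and it has the same boundary data $v|_{\partial D_\rho}=1$, $v|_{\partial D_R}=0$ as the solution $u$ of the Dirichlet problem \eqref{eqWLap} on $D_R-\widebar{D}_\rho$. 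The only non-formal input here is $\Vert\nabla^\Sigma r\Vert\leq\Vert\nabla r\Vert=1$, used to pass from $\kappa\,\Vert\nabla^\Sigma r\Vert^2$ to $\kappa$.

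Next, as in the proof of Theorem~\ref{thm:extrinsic2}, I would apply the maximum principle (Theorem~\ref{Hopf}) to the $\W$-subharmonic function $v$ and the $\W$-harmonic function $u$ (resp. to $-v$ and $-u$): this yields $u\geq v$ (resp. $u\leq v$) on $D_R-\widebar{D}_\rho$ and the flux inequality $\langle\W(\nabla^\Sigma u),n_{\partial D_\rho}\rangle\leq(\geq)\langle\W(\nabla^\Sigma v),n_{\partial D_\rho}\rangle$ along $\partial D_\rho$, where $n_{\partial D_\rho}=\nabla^\Sigma v/\Vert\nabla^\Sigma v\Vert=-\nabla^\Sigma r/\Vert\nabla^\Sigma r\Vert$. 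Reading ${\rm Cap}^\Sigma_\W(D_\rho,D_R)$ off as the flux of $\W(\nabla^\Sigma u)$ through $\partial D_\rho$ and using $\nabla^\Sigma v=\phi'(r)\nabla^\Sigma r$ gives
\[ {\rm Cap}^\Sigma_\W(D_\rho,D_R)\ \leq(\geq)\ -\phi'(\rho)\int_{\partial D_\rho}\Bigl\langle\W(\nabla^\Sigma r),\tfrac{\nabla^\Sigma r}{\Vert\nabla^\Sigma r\Vert}\Bigr\rangle\,d\mu_{\partial D_\rho}. \]
By Sard's theorem we may assume $\Vert\nabla^\Sigma r\Vert\neq0$ on $\partial D_\rho$; bounding $\langle\W(\nabla^\Sigma r),\nabla^\Sigma r/\Vert\nabla^\Sigma r\Vert\rangle$ above by $\kappa\,\Vert\nabla^\Sigma r\Vert$ (resp. below by $\mu\,\Vert\nabla^\Sigma r\Vert$), invoking $\Vert\nabla^\Sigma r\Vert\leq1$ and the compactness of $\partial D_\rho$, and inserting $-\phi'(\rho)={\rm Cap}^h(B^w_\rho,B^w_R)/{\rm Vol}_h(\partial B^w_\rho)$ (from \eqref{fcapacitymodel}) produces \eqref{eq:CapCompextrinsicr} with $C(\rho)=\max_{\partial D_\rho}\kappa$ (resp. $\min_{\partial D_\rho}\mu$).

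For the type conclusion I would let $R\to\infty$. In the first case, the balance condition $\theta\leq-m\,w'/w$ together with $w'\geq0$ integrates to $h(t)\leq\log\bigl(w(\rho)^m/w(t)^m\bigr)$, whence $\int_\rho^\infty w^{1-m}e^{-h}\geq w(\rho)^{-m}\int_\rho^\infty w(s)\,ds=\infty$ (since $w$ is non-decreasing, hence $\geq w(\rho)>0$ on $[\rho,\infty)$); therefore ${\rm Cap}^h(B^w_\rho,B^w_R)\to0$, so ${\rm Cap}^\Sigma_\W(D_\rho)=0$ and $\Sigma$ is $\W$-parabolic by Theorem~\ref{theorGrig}. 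In the resp. case the assumption $\int_\rho^\infty w^{1-m}e^{-h}<\infty$ keeps $-\phi'(\rho)$ bounded away from $0$ as $R\to\infty$, while $\int_{\partial D_\rho}\langle\W(\nabla^\Sigma r),\nabla^\Sigma r/\Vert\nabla^\Sigma r\Vert\rangle\,d\mu_{\partial D_\rho}>0$, so ${\rm Cap}^\Sigma_\W(D_\rho)>0$ and $\Sigma$ is $\W$-hyperbolic. The main obstacle I anticipate is the signed bookkeeping in the second paragraph: the signs of $\phi'$, $w'$ and $m\,w'/w+\theta$ must all be tracked so that the three $\kappa$-substitutions point consistently and the weighted-model structure of $\phi$ makes the bracket vanish identically; once this is arranged, the rest is a transcription of the proofs of Theorems~\ref{thm:extrinsic2} and \ref{propIntrinsicA2}.
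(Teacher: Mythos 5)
Your proposal is correct and follows essentially the same route as the paper: transplant $\phi_{\rho,R,h,m}$ to the extrinsic annulus, apply Theorem \ref{mainteo} together with the coarse bounds ${\rm tr}_\Sigma(\W)\leq m\,\kappa$, $\langle\W(\nabla^\Sigma r),\nabla^\Sigma r\rangle\leq\kappa\,\Vert\nabla^\Sigma r\Vert^2\leq\kappa$ and the balance condition so that $\Delta^\Sigma_\W v$ is comparable to $\kappa\,\Delta^h_w\phi_{\rho,R,h,m}=0$, then invoke the maximum principle and the flux identity for the capacity. The integration of the balance condition for the parabolicity conclusion is exactly the argument the paper imports from Theorem \ref{propIntrinsicA2}, and your sign bookkeeping in both the direct and the ``resp.'' cases checks out.
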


\begin{proof}
Let $\phi_{\rho,R,h,m}$ be the function given in \eqref{fsolmodel}, with $n=m$. Consider the radial function $v:=\phi_{\rho,R,h,m}\circ r$ in $\widebar{D}_R- D_\rho$. For $p\in \Sigma-D_\rho$, since $\phi_{\rho,R,h,m}'\leq 0$ and 
$$\phi_{\rho,R,h,m}''(r)-\phi_{\rho,R,h,m}'(r)\frac{w'(r)}{w(r)}=-\phi_{\rho,R,h,m}'(r)\left(m\frac{w'(r)}{w(r)}+\theta\right)\leq (\geq)\,\,0,$$
by the balance condition, taking into account that $w'\geq 0$, ${\rm tr}_\Sigma(\mathcal{W})\leq  m\, \kappa(p)$, $\langle\mathcal{W}(\nabla^\Sigma r),\nabla^\Sigma r\rangle \leq \kappa(p)\,\Vert\nabla^\Sigma r\Vert^2$, and $\Vert\nabla^\Sigma r\Vert \leq 1$, we obtain from Proposition \ref{mainteo} that

\begin{eqnarray*}
\Delta^\Sigma_\mathcal{W}\,v &\geq (\leq)& \kappa\left(\phi_{\rho,R,h,m}''(r)+(m-1) \phi_{\rho,R,h,m}'(r)\frac{w'(r)}{w(r)}\right) +\phi_{\rho,R,h,m}'(r)\, \theta(r)\,\kappa\\
&=& \kappa\left(\phi_{\rho,R,h,n}''(r)+(m-1) \phi_{\rho,R,h,m}'(r)\,\frac{w'(r)}{w(r)} + \phi_{\rho,R,h,m}'(r)\, \theta(r)\right)\\
&=& \kappa \,\Delta^h_w \,\phi_{\rho,R,h,m}=0=\Delta^\Sigma_\mathcal{W}\,u,
\end{eqnarray*}
where $u$ is the solution to the Dirichlet problem \eqref{eqWLap} in $D_R- \widebar{D}_\rho$. The proof follows the lines of the proof of Theorems \ref{thm:extrinsic2} and \ref{propIntrinsicA2}, taking into account Remark \ref{qN}. \\

\end{proof}
\begin{remark} Under the hypothesis corresponding to $
{\rm sec}(\sigma_p) \geq - \frac{w''(r(p))}{w(r(p))}$ we can deduce that
\begin{equation*}
\frac{{\rm Cap}^\Sigma_\mathcal{W}(D_\rho,D_R)}{C(\rho)\,{\rm Vol}(\partial D_\rho)}\leq \frac{{\C}^h(B^w_\rho, B^w_R)\,}{{\rm Vol}_h(\partial B^w_\rho)}.
\end{equation*}

On the other hand, under the hypothesis corresponding to $
{\rm sec}(\sigma_p) \leq - \frac{w''(r(p))}{w(r(p))}$, if $\int_\rho^\infty w(t)\,dt$ is finite, then $\int_\rho^\infty w^{1-m}(s)\,{\rm e}^{-h(s)}ds$ is also finite and $\Sigma$ is $\W$-hyperbolic.  
\end{remark}

\begin{remark} For the isotropic case, $\W=e^f\cdot{\rm Id}$, the functions $\mu$ and $\kappa$ in Definition \ref{defConducRiem} are given by $\mu(p)=\kappa(p)= e^{f(p)}$, for all $p\in M$, and 

\begin{equation*}\label{rel:HW_weighted}
m\,\Vec{H}_\W= e^f(\Vec{H}_f+\nabla f),
\end{equation*}
where $\Vec{H}_f=m \Vec{H}-(\nabla f)^\bot$ is the weighted mean curvature of $\Sigma$ in the weighted manifold $(M,g,{\rm e}^f)$. In this situation, the hypothesis on the sign of $w'$ is not needed, and we recover Theorem 5.7 in \cite{hpr2020a}.
\end{remark}
\section{Hyperbolicity from conductivities with bounded variation} \label{sec:BV}
{
In the previous results, we assume bounds on the trace of the conductivity $\W$, that is, bounds on the mean value of its eigenvalues. In the same vein, we can obtain a $\W$-parabolicity criterion using also the coefficient of variation ${\rm cv}(\W)$ of these eigenvalues.
We first recall the definitions of these statistical notions mentioned above when applied to the eigenvalues of the matrix $\mathcal{W}$:}
\begin{definition}
    Let $(M^n,g,\mathcal{W})$ be a complete conductive Riemannian manifold, $p$ a point in $M$, and $\{e_i\}_{i=1}^n$ an orthonormal basis of $T_pM$. Then \emph{the mean value $\overline{\lambda}_p(\mathcal{W})$ of $\mathcal{W}$ at $p$} is given by 
    $$
\overline{\lambda}_p(\mathcal{W}):=\frac{1}{n}\sum_{i=1}^n \langle e_i , \mathcal{W}(e_i)\rangle=\frac{1}{n}{\rm tr}(\mathcal{W}).
    $$
    The \emph{standard deviation ${\rm sd}_p(\mathcal{W})$ of $\mathcal{W}$ at $p$} is thence
    $$
{\rm sd}_p(\mathcal{W}):=\left(\frac{1}{n}{\rm tr}(\mathcal{W}^2)-\left(\frac{1}{n}{\rm tr}(\mathcal{W})\right)^2\right)^{\frac{1}{2}}.
    $$
    \emph{The coefficient of variation  ${\rm cv}_p(\mathcal{W})$ of $\mathcal{W}$ at $p$} is correspondingly
    $$
{\rm cv}_p(\mathcal{W}):=\frac{{\rm sd}_p(\mathcal{W})}{\overline{\lambda}_p(\mathcal{W})}.
    $$
\end{definition}
\begin{remark}
    Observe that, if $\{e_i\}_{i=1}^n$ is an orthonormal basis of $T_p M$ that diagonalizes $\W$, namely
    $$
\mathcal{W}(e_i)=\lambda_i e_i,
    $$
    then the mean value $\overline{\lambda}_p$ of $\mathcal{W}$ at $p$ and standard deviation ${\rm sd}_p$ of $\mathcal{W}$ at $p$ are nothing else than the classical mean and the classical standard deviation of the list of eigenvalues $\{\lambda_1, \cdots,\lambda_n\}$. Indeed,
    $$
 \overline{\lambda}_p(\mathcal{W})=\frac{1}{n}\sum_{i=1}^n\lambda_i,\quad {\rm sd}_p(\mathcal{W})=\sqrt{\frac{1}{n}\sum_{i=1}^n\left(\lambda_i-\overline{\lambda}_p\right)^2}.
    $$

    For the isotropic case, $\W=e^f\cdot {\rm Id}$, $\lambda_i=e^f$ for all $i=1,\ldots,n$. Then, $\overline{\lambda}_p(\mathcal{W})={\rm e}^f$ and ${\rm sd}_p(\mathcal{W})=0$. {So the coefficient of variation of $\W$ tells us how far the conductivity is from being isotropic}.
\end{remark}

\begin{theorem}\label{teo:93}{
Let $(M^n,g,\mathcal{W})$ be a conductive simply-connected complete Riemannian manifold with sectional curvatures bounded from above by ${\rm sec}\leq 0$ (namely, a conductive Cartan-Hadamard manifold). Let $\Sigma\subset M$ be a $\mathcal{W}$-compatible immersed $m$-dimensional submanifold. Suppose moreover that
$$
{\rm CV}{(\Sigma)}:= \sup_{p\in {\Sigma}}{\rm cv}_p\left({\mathcal{W}^\Sigma}\right)< \frac{m-2}{2\sqrt{m}},$$
and that
$
\begin{aligned}
  \langle\nabla r, \vec{H}_\mathcal{W}\rangle \leq 0
\end{aligned}
$,
\noindent where $r: \Sigma \rightarrow \mathbb{R}$ is the distance function in $\Sigma$ to some point in $M$. Then $\Sigma$ is $\W^{\Sigma}$-hyperbolic.}
\end{theorem}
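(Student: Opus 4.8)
The plan is to derive the statement from the hyperbolicity branch of Theorem~\ref{thm:extrinsic2}, applied with the flat comparison model $w(s)=s$. Write $o\in M$ for the point in the hypothesis (a pole of $M$, since $M$ is Cartan--Hadamard) and $r=\dist(o,\cdot)$ restricted to $\Sigma$; as $\exp_o$ is a diffeomorphism, $r$ is smooth on $M\setminus\{o\}$, so there is no cut locus to worry about, and from $\operatorname{sec}_M\le 0=-w''/w$ hypothesis (a) of Theorem~\ref{thm:extrinsic2} (with the ``$\le$'' option) holds at every $p\in M\setminus\{o\}$ and every plane through $(\nabla r)_p$. Thus it remains to produce numbers $\rho,q>0$ and a function $\theta$ so that conditions (1) and (2) of (b) are met on $\Sigma-D_\rho$ and so that $\int_\rho^\infty w^{1-q}(s)\,{\rm e}^{-h(s)}\,ds<\infty$; throughout we use, as in Theorem~\ref{thm:extrinsic2}, that $\Sigma$ is noncompact and properly immersed, so the extrinsic balls $D_R$ are precompact.

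Condition (2) of (b) is disposed of first: the radiality of $\vec H_{\mathcal W}$ together with the sign condition $\langle\nabla r,\vec H_{\mathcal W}\rangle\le 0$ (and $\langle\mathcal W(\nabla^\Sigma r),\nabla^\Sigma r\rangle\ge 0$) is precisely what renders the $\vec H_{\mathcal W}$-term harmless, so one may take $\theta\equiv 0$, hence $h\equiv 0$. The integral condition then collapses to $\int_\rho^\infty s^{1-q}\,ds<\infty$, i.e.\ $q>2$. Everything therefore reduces to exhibiting a single exponent $q>2$ for which condition (1) of (b) holds at every point of $\Sigma$; since $w'=1$ this is the pointwise inequality $q\,\langle\mathcal W(\nabla^\Sigma r),\nabla^\Sigma r\rangle\le {\rm tr}_\Sigma\mathcal W$. (Because these inequalities are required on all of $\Sigma$, we may choose $\rho$ to be any value for which $\partial D_\rho$ is smooth, which is almost every $\rho$ by Sard.)

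The core estimate is a curvature-free bound on the eigenvalues of $\mathcal W^\Sigma$. Fix $p\in\Sigma$ and let $\lambda_1\le\cdots\le\lambda_m$ be the eigenvalues of $\mathcal W^\Sigma=\mathcal W|_{T_p\Sigma}$ (well defined by $\mathcal W$-compatibility), with mean $\overline\lambda=\tfrac1m{\rm tr}_\Sigma\mathcal W$ and standard deviation ${\rm sd}$. Since $\nabla^\Sigma r\in T_p\Sigma$ has $\Vert\nabla^\Sigma r\Vert\le 1$ we get $\langle\mathcal W(\nabla^\Sigma r),\nabla^\Sigma r\rangle\le\lambda_m$, and since $\lambda_m-\overline\lambda\ge 0$ is one of the deviations, $(\lambda_m-\overline\lambda)^2\le\sum_i(\lambda_i-\overline\lambda)^2=m\,{\rm sd}^2$, so $\lambda_m\le\overline\lambda\big(1+\sqrt m\,{\rm cv}_p(\mathcal W^\Sigma)\big)\le\overline\lambda\big(1+\sqrt m\,{\rm CV}(\Sigma)\big)$. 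Hence $q\,\langle\mathcal W(\nabla^\Sigma r),\nabla^\Sigma r\rangle\le q\,\overline\lambda\big(1+\sqrt m\,{\rm CV}(\Sigma)\big)\le m\,\overline\lambda={\rm tr}_\Sigma\mathcal W$ whenever $q\le m/\big(1+\sqrt m\,{\rm CV}(\Sigma)\big)$. The hypothesis ${\rm CV}(\Sigma)<\tfrac{m-2}{2\sqrt m}$ yields $1+\sqrt m\,{\rm CV}(\Sigma)<\tfrac m2$, so $m/\big(1+\sqrt m\,{\rm CV}(\Sigma)\big)>2$; choosing any $q$ with $2<q\le m/\big(1+\sqrt m\,{\rm CV}(\Sigma)\big)$, all hypotheses of the hyperbolic branch of Theorem~\ref{thm:extrinsic2} are satisfied with $w(s)=s$, $\theta\equiv 0$, and $\int_\rho^\infty s^{1-q}\,ds<\infty$, whence $\Sigma$ is $\mathcal W^\Sigma$-hyperbolic.

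The only genuine point is this passage from the statistical bound on the spectrum of $\mathcal W^\Sigma$ to inequality (1) of (b) with a \emph{uniform} exponent $q>2$; the strict inequality on ${\rm CV}(\Sigma)$ is exactly what leaves the gap $q>2$, which is in turn what forces convergence of $\int^\infty s^{1-q}\,ds$. The remaining ingredients --- reduction to the flat model, absence of a cut locus, smoothness of $\partial D_\rho$ for a.e.\ $\rho$, and the vanishing of the $\vec H_{\mathcal W}$-contribution through the radiality/sign hypothesis --- are immediate from the results already established.
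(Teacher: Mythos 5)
Your reduction to Theorem \ref{thm:extrinsic2} breaks down at the mean-curvature term, and the failure is a sign, not a detail. In the hyperbolicity branch of Theorem \ref{thm:extrinsic2}, hypothesis (b)(2) with $\theta\equiv 0$ reads $m\langle \vec{H}_{\mathcal{W}},\nabla r\rangle\geq 0$: since the transplanted function $v=\phi\circ r$ has $\phi'\leq 0$ and must be shown to be $\W$-\emph{super}harmonic, the term $m\,\phi'(r)\langle \vec{H}_{\mathcal{W}},\nabla r\rangle$ in Theorem \ref{mainteo} is only $\leq 0$ when $\langle \vec{H}_{\mathcal{W}},\nabla r\rangle\geq 0$. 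The stated hypothesis $\langle\nabla r,\vec{H}_{\mathcal{W}}\rangle\leq 0$ gives exactly the opposite, so this term works \emph{against} superharmonicity and is not ``harmless''; your claim that the sign condition disposes of (b)(2) is backwards. (For what it is worth, the paper's own proof ends by deriving $\langle r\nabla r,\vec{H}_{\mathcal{W}}\rangle<0$ and declaring a contradiction, which only contradicts the hypothesis if it is read as $\geq 0$; so the statement very likely carries a sign typo, and under the reading $\langle\nabla r,\vec{H}_{\mathcal{W}}\rangle\geq 0$ your treatment of (b)(2) would be correct. But as the statement and your argument are both written with $\leq 0$, the step does not go through.)

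There is a second, independent gap: Theorem \ref{thm:extrinsic2} requires $\Sigma$ to be noncompact and \emph{properly} immersed (precompact extrinsic balls $D_R$, smooth $\partial D_\rho$, an exhaustion by capacitors), whereas Theorem \ref{teo:93} assumes only an immersed $\W$-compatible submanifold. The paper deliberately avoids the capacity route here: it argues by contradiction, applying the $\W$-parabolic minimum principle to a smooth truncation of $r^{-k}$ with $0<k<\tfrac{m}{\sqrt{m}\,{\rm CV}(\Sigma)+1}-2$ and invoking Theorem \ref{mainteo} pointwise along an almost-minimizing sequence; this needs no properness (see the remark following the theorem about non-properness being generic). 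So even after fixing the sign, your argument proves a strictly weaker statement. The part of your proof that does match the paper is the spectral estimate: bounding the top eigenvalue by $\overline{\lambda}\bigl(1+\sqrt{m}\,{\rm cv}_p(\W^{\Sigma})\bigr)$ and using ${\rm CV}(\Sigma)<\tfrac{m-2}{2\sqrt{m}}$ to open a gap above $2$ is exactly the computation the paper performs, with your exponent $q$ playing the role of $k+2$.
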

{
\begin{remark}
  Observe that in the isotropic case $\W=Id$,  this is a natural generalization of the results in \cite{GAFA} when we consider proper and minimal immersions $\vec{H}=0$. Moreover, this theorem can be applied to self-expanders $X:S\to \mathbf{R}^n$ of the Mean Curvature flow because thence
  $$
\langle X, \vec{H}\rangle=\langle r\nabla r, \vec{H}\rangle=\lambda \Vert X^\perp\Vert^2\leq 0.
  $$
  Hence Theorem \ref{teo:93} is an extension of the results of \cite{Soliton2021}. Observe that moreover by \cite{alarcon} we know that at least for minimal surfaces in Euclidean space non-properness is a generic property.
\end{remark}}

\begin{proof} Suppose on the contrary that $\Sigma$ is $\W$-parabolic and let us consider a point $p\in \Sigma$. Since $M$ is a Cartan-Hadamard manifold then $p$ is a pole of $M$. We shall apply the $\mathcal{W}_\Sigma$-minimum parabolic principle ({see \cite{Alias2016}}) to a smooth function $u \in C^\infty(\Sigma)$ such that $u(q)\neq 0\,\forall q \in \Sigma$ and
$$ q\mapsto u(q):=r^{-k}(q)
$$
outside a compact set of $\Sigma$ containing $p$. Here $r$ is the distance function to the pole $p$
and $k$ is some positive number such that
$$
0<k<\frac{m}{\sqrt{m}{\rm CV}(\Sigma)+1}-2.
$$
Notice that the right hand side is positive by assumption, so that such a $k$ exists.
The construction of this smooth function on $\Sigma$ will be constructed in a similar way as in  \cite{Soliton2021}:

\begin{lemma} Given a point $p \in \Sigma$, and $r: \Sigma \rightarrow \mathbb{R}$ the distance function to the point $p$, there is $u: \Sigma \rightarrow \mathbb{R}$ such that $u \in C^\infty(\Sigma)$ and such that $u(q) \neq 0\,\,\forall q \in \Sigma$ and
$$
 q\mapsto u(q):=r^{-k}(q)
$$
outside a compact set of $\Sigma$ containing $p$. 
\end{lemma}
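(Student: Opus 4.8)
The plan is to produce $u$ from $r^{-k}$ by a standard cut-off that glues the constant function $1$ on a neighborhood of $p$ to the function $r^{-k}$ outside a compact set; in this way the only genuine issue — the singularity of $r^{-k}$ at $p$, where $r$ also fails to be smooth — is simply cut away. I will use two properties of $r$. Since $M$ is Cartan--Hadamard, $p$ is a pole of $M$, so the distance function $r$ from $p$ is smooth and strictly positive on $\Sigma-\{p\}$, with $r(p)=0$. Moreover, as $\Sigma$ is noncompact and properly immersed, $r$ is a proper exhaustion function of $\Sigma$: for every $t>0$ the sublevel set $\{q\in\Sigma:\ r(q)\le t\}$ is compact, and it contains $p$.

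First I would fix a profile function. Pick $\rho_0>0$ and a cut-off $\chi\in C^{\infty}(\mathbb{R})$ with $0\le\chi\le1$, $\chi\equiv0$ on $(-\infty,\rho_0]$, and $\chi\equiv1$ on $[2\rho_0,+\infty)$, and set
\[
\psi(s):=1+\chi(s)(s^{-k}-1)\quad\text{for } s>0,\qquad \psi(0):=1 .
\]
On $(0,\rho_0]$ one has $\chi\equiv0$, hence $\psi\equiv1$ there; therefore $\psi$, which is manifestly smooth on $(0,+\infty)$, extends to a function in $C^{\infty}([0,+\infty))$ that is identically $1$ on $[0,\rho_0]$ and equals $\psi(s)=s^{-k}$ on $[2\rho_0,+\infty)$. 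It is also strictly positive: $\psi(0)=1>0$, and for $s>0$ we may write $\psi(s)=(1-\chi(s))\cdot 1+\chi(s)\cdot s^{-k}$, a convex combination of the positive numbers $1$ and $s^{-k}$ with weights $1-\chi(s),\chi(s)\in[0,1]$, so $\psi(s)\ge\min\{1,s^{-k}\}>0$.

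Then I would set $u:=\psi\circ r$ on $\Sigma$ and check the three required properties. Smoothness: on $\Sigma-\{p\}$ the function $r$ is smooth with values in $(0,+\infty)$ and $\psi$ is smooth there, so $u\in C^{\infty}(\Sigma-\{p\})$; and on the open neighborhood $\{q\in\Sigma:\ r(q)<\rho_0\}$ of $p$ we have $u\equiv1$, which is smooth, so $u\in C^{\infty}(\Sigma)$. Non-vanishing: $u=\psi\circ r>0$ everywhere since $\psi>0$. Finally, since $\psi(s)=s^{-k}$ for $s\ge2\rho_0$, we get $u(q)=r^{-k}(q)$ for all $q\in\Sigma-K$, where $K:=\{q\in\Sigma:\ r(q)\le 2\rho_0\}$ is compact and contains $p$. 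This proves the lemma (for the prescribed $k>0$, and in fact for any $k>0$).

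There is no serious obstacle here; the lemma's only purpose is to make legitimate the use of $r^{-k}$ as a test function in the $\mathcal{W}_{\Sigma}$-minimum parabolic principle, and the single geometric input is that $p$ is a pole of the ambient Cartan--Hadamard manifold $M$, which guarantees the smoothness and positivity of $r$ away from $p$; everything else is the elementary cut-off above, with compactness of $K$ forced by the properness of the immersion.
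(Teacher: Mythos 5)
Your proof is correct, and it takes a genuinely different route from the paper's. The paper builds $u$ piecewise: it defines the target values ($1$ near $p$, $r^{-k}$ on the annulus $\{2\le r\le 3\}$) on a closed set $A$, interpolates linearly on $\{1\le r\le 2\}$ to get a continuous function, invokes the smooth extension lemma (Lee, Cor.\ 6.27) to produce a smooth $v_3$ on a neighborhood of $A$, and then splices $v_3$ with $r^{-k}$; it also runs a separate, rescaled version of this when $\sup_\Sigma r<\infty$. You instead write one global formula $u=\psi\circ r$ with $\psi=(1-\chi)\cdot 1+\chi\cdot s^{-k}$ for a radial cut-off $\chi$. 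Your version buys three things: an explicit formula with no appeal to an abstract extension theorem; automatic positivity of $u$ everywhere via the convex-combination observation (the paper's extension lemma only produces \emph{some} smooth extension agreeing with the data on $A$, with no sign control on the transition annulus $\{1<r<2\}$, so the claim $u(q)\neq 0$ for all $q$ is not actually justified by the paper's construction as written -- your argument repairs this); and no case distinction according to whether $\sup_\Sigma r$ is finite. One caveat applies equally to both arguments: the conclusion ``$u=r^{-k}$ outside a compact set'' requires the sublevel set $\{q\in\Sigma: r(q)\le 2\rho_0\}$ to be compact. You make this explicit by assuming the immersion is proper, whereas the paper leaves it implicit (and the hypotheses of Theorem \ref{teo:93} do not state properness); this is a gap in the statement of the surrounding theorem rather than in your proof, but you should be aware that your properness assumption is doing real work there.
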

\begin{proof}
Let us assume first that $\text{sup}_\Sigma r =\infty$. As $r :\Sigma \rightarrow \mathbb{R}$ is a continuous function, then the set
$$A:=\{q \in \Sigma:r(q) \leq 1\} \cup \{q \in \Sigma :2 \leq r(q) \leq 3\}$$
is a closed set in such a way that, for a fixed $\epsilon >0$, the function 
$$v_1: A \rightarrow \left(\frac{1}{3^k}-\epsilon, 1+\epsilon\right)$$
\noindent defined as
$$v_1(q):=\begin{cases}
    1\,\,\,\, \text{if}\,\,\, r(q) \leq 1\\
    \frac{1}{r^k(q)}\,\,\,\text{if}\,\,\, 2 \leq r(q) \leq 3 
\end{cases}$$

is a smooth function $v_1\in C^\infty(A)$.

Let us consider now, for a fixed $\delta >0$, the open set in $\Sigma$ defined as 
$$N:=\{q \in \Sigma : r(q) <3+\delta\}.$$

It is obvious that $A \subseteq N$. Then, we define the function $v_2: N \rightarrow M$ as:
$$v_2(q):=\begin{cases}
    1\,\,\,\, \text{if}\,\,\, r(q) \leq 1\\
    2-\frac{1}{2^k}+(\frac{1}{2^k}-1)r(q) \,\,\,\, \text{if}\,\,\, 1 \leq r(q) \leq 2\\
    \frac{1}{r^k(q)}\,\,\,\text{if}\,\,\, 2 \leq r(q) \leq 3 
\end{cases}.$$

This function is a continuous extension of $v_1$, namely, $v_2\vert_A=v_1$ and hence, applying the Extension lemma, (see Corollary 6.27 in \cite{Lee}), we have that there is 
$$v_3: N \rightarrow M$$ a smooth extension of $v_2$, so we can define
$$u:\Sigma \rightarrow \mathbb{R}$$ as
$$u(q):=\begin{cases}
    v_3(q)\,\,\,\, \text{if}\,\,\, r(q) \leq 3\\
    \frac{1}{r^k(q)}\,\,\,\text{if}\,\,\, r(q)> 3 
\end{cases},$$
obtaining finally a smooth function such that $u(q) \neq 0\,\,\forall q \in \Sigma$ and that
$$u:\Sigma\to \mathbb{R}, \quad q\mapsto u(q):=r^{-k}(q)
$$
outside a compact set of $\Sigma$ containing $p$. 

If we assume that $\text{sup}_\Sigma r =R <\infty$,
then we follow exactly the same procedure, starting with the closed set
$$A:=\left\{q \in \Sigma:r(q) \leq \frac{R}{4} \right\} \cup\left \{q \in \Sigma :\frac{R}{3} \leq r(q) \leq \frac{R}{2}\right\}$$
and, given a fixed $\epsilon >0$, with the smooth function defined in $A$
$$w_1: A \rightarrow \left(\frac{3^k}{R^k}-\epsilon, 1+\epsilon\right)$$
\noindent defined as
$$w_1(q):=\begin{cases}
    1\,\,\,\, \text{if}\,\,\, r(q) \leq \frac{R}{4}\\
    \frac{1}{r^k(q)}\,\,\,\text{if}\,\,\, \frac{R}{2} \leq r(q) \leq \frac{R}{3} 
\end{cases}.$$

\end{proof}

Now, given $p \notin \Sigma$, we consider the smooth function $u: \Sigma\to \mathbb{R}$ that we obtain applying the lemma above. Therefore, since 
$$
\inf_\Sigma u=:u_*\geq 0>-\infty,
$$
there exists a sequence of points $\{x_l\}_{l\in \mathbb{N}} \subset \Sigma -\{p\}$ such that
$$
u(x_l)<u_*+\frac{1}{l},\quad \Delta^\Sigma_{\mathcal{W}}u(x_l)>0.
$$
By using Theorem \ref{mainteo},
$$
\begin{aligned}
    0<& r^{k+2}(x_l)\,\Delta^\Sigma_{\mathcal{W}}u(x_l)\leq(k^2+2k)\langle \mathcal{W}(\nabla^\Sigma r),\nabla^\Sigma r\rangle-k\, {\rm tr}_\Sigma (\mathcal{W}^\Sigma)-m\,k\, r(x_l)\langle \nabla r,\Vec{H}_\mathcal{W}(x_l)\rangle\\
    \leq &(k^2+2k)\,\lambda^*\, \Vert\nabla^\Sigma r\Vert^2-mk\, \overline{\lambda}_{x_l}(\mathcal{W}^\Sigma)-m\,k\, r(x_l)\langle \nabla r,\Vec{H}_\mathcal{W}(x_l)\rangle,  
\end{aligned}
$$
where $\lambda^*$ is the greatest of the eigenvalues of $\mathcal{W}^\Sigma$. Taking into account that $\Vert \nabla^\Sigma r\Vert^2 \leq 1 $,
$$
\begin{aligned}
    0<&(k^2+2k)\,\lambda^* -k\, m\,\overline{\lambda}_{x_l}(\mathcal{W}^\Sigma)-m\,k\, r(x_l)\langle \nabla r,\Vec{H}_\mathcal{W}(x_l)\rangle\\
    =& (k^2+2k)(\lambda^*-\overline{\lambda}_{x_l}(\mathcal{W}^\Sigma))+(k^2-(m-2)k)\overline{\lambda}_{x_l}(\mathcal{W}^\Sigma)-m\,k\, r(x_l)\langle \nabla r,\Vec{H}_\mathcal{W}(x_l)\rangle\\
    \leq& (k^2+2k)\sqrt{m}\cdot {\rm sd}_{x_l}(\mathcal{W}^\Sigma)+(k^2-(m-2)k)\overline{\lambda}_{x_l}(\mathcal{W}^\Sigma)-m\,k\, r(x_l)\langle \nabla r,\Vec{H}_\mathcal{W}(x_l)\rangle\\
    =& k\cdot \overline{\lambda}_{x_l}(\mathcal{W}^\Sigma)\left( (k+2)\left(\sqrt{m}\cdot {\rm cv}_{x_l}(\mathcal{W}^\Sigma)+1\right)-m\right)-m\,k\, r(x_l)\langle \nabla r,\Vec{H}_\mathcal{W}(x_l)\rangle\\
    \leq & k\cdot \overline{\lambda}_{x_l}(\mathcal{W}^\Sigma)\left( (k+2)\left(\sqrt{m}\cdot {\rm CV}(\Sigma)+1\right)-m\right)-m\,k\, r(x_l)\langle \nabla r,\Vec{H}_\mathcal{W}(x_l)\rangle.
\end{aligned}
$$
Therefore,
$$
\begin{aligned}
    m\,k\, r(x_l)\langle \nabla r,\Vec{H}_\mathcal{W}(x_l)\rangle<&\overline{\lambda}_{x_l}(\mathcal{W}^\Sigma)\left(m- (k+2)\left(\sqrt{m}\cdot {\rm CV}(\Sigma)+1\right)\right)\\
    =&{\overline{\lambda}_{x_l}(\mathcal{W}^\Sigma)}{\left(\sqrt{m}\cdot {\rm CV}(\Sigma)+1\right)}\left(\frac{m}{\sqrt{m}\cdot {\rm CV}(\Sigma)+1}- 2-k\right).
\end{aligned}
$$
But since we are assuming $
\frac{m}{\sqrt{m}{\rm CV}(\Sigma)+1}-2>k>0
$, we can conclude that 
$$
\begin{aligned}
    \langle r(x_l)\nabla r,\Vec{H}_\mathcal{W}(x_l)\rangle<&0,
\end{aligned}
$$
{and we arrive therefore to a contradiction .}

\end{proof}

As a consequence of the above Theorem \ref{teo:93}, for the intrinsic case we have

\begin{corollary}\label{cor:95}
    Let $(M,g,\mathcal{W})$ be conductive  $n$-dimensional Cartan Hadamard manifold with 
$$
{\rm CV}:= \sup_{p\in M}{\rm cv}_p\left(\mathcal{W}\right)< \frac{n-2}{2\sqrt{n}} .
$$
and
$
\begin{aligned}
{\rm div} \left(\mathcal{W}\right)=0
\end{aligned}
$.
Then $M$ is $\W$-hyperbolic
\end{corollary}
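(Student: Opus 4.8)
The plan is to deduce Corollary \ref{cor:95} from Theorem \ref{teo:93} applied in the degenerate case where the ``submanifold'' is the whole manifold, i.e. $\Sigma = M$ and $m = n$, via the identity immersion. First I would check that $M$ qualifies as a noncompact, properly immersed $\W$-compatible submanifold of itself: the normal bundle is the zero bundle, so both conditions $\W(T_pM)=T_pM$ and $\W\big((T_pM)^\perp\big)=(T_pM)^\perp$ in Definition \ref{def:CompatibleConduc} hold trivially, and the induced conductivity is $\W^\Sigma=\W$. Note also that the hypothesis $\operatorname{CV}<\frac{n-2}{2\sqrt n}$ forces $n\ge 3$, hence $M$ is a noncompact Cartan--Hadamard manifold; in particular ${\rm sec}\le 0$, every point $p\in M$ is a pole, and the distance function $r$ to $p$ is smooth on $M\setminus\{p\}$ with empty cut locus.

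Next I would verify the two remaining hypotheses of Theorem \ref{teo:93}. Since the second fundamental form of $M$ in itself vanishes, $B\equiv 0$, the proposition following Definition \ref{defWH} gives $m\,\vec H_{\mathcal W}= \sum_i B(e_i,\W(e_i)) + \operatorname{div}^\Sigma(\W) = \Div(\W)$, which is zero by assumption; therefore $\langle\nabla r,\vec H_{\mathcal W}\rangle = 0\le 0$, exactly the sign condition required. Moreover $\operatorname{CV}(\Sigma)=\sup_{p\in M}{\rm cv}_p(\W^\Sigma)=\operatorname{CV}<\frac{n-2}{2\sqrt n}=\frac{m-2}{2\sqrt m}$, so the coefficient-of-variation hypothesis holds as well. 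With all hypotheses in place, Theorem \ref{teo:93} yields that $\Sigma=M$ is $\W^\Sigma=\W$-hyperbolic, which is the assertion of Corollary \ref{cor:95}.

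The only point requiring a moment's care — and the reason one might doubt this is a genuine ``consequence'' rather than a separate argument — is that the proof of Theorem \ref{teo:93} is phrased for a proper submanifold, invoking Theorem \ref{mainteo} and the $\W_\Sigma$-minimum parabolic principle. I would check that this specializes transparently to $\Sigma=M$: Theorem \ref{mainteo} degenerates to the intrinsic $\W$-Laplacian inequality of Proposition \ref{ineq1intrinsic} with the flat model $w(r)=r$ (Example \ref{propSpaceForm} with $b=0$, since ${\rm sec}\le 0 = -w''(r)/w(r)$), the term $m\langle\vec H_{\mathcal W},\nabla r\rangle$ becomes $\langle\Div(\W),\nabla r\rangle=0$, and the minimum principle is just the ordinary parabolic minimum principle on $M$ under the (contradiction) hypothesis that $M$ is $\W$-parabolic. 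Then, exactly as in the proof of Theorem \ref{teo:93}, one picks $0<k<\frac{n}{\sqrt n\,\operatorname{CV}+1}-2$, produces a sequence $\{x_\ell\}$ with $u(x_\ell)=r^{-k}(x_\ell)\to \inf_M u$ and $\Delta_{\mathcal W}u(x_\ell)>0$, and runs the same chain of estimates using $\langle\W(\nabla r),\nabla r\rangle\le\lambda^*$, $\operatorname{tr}(\W)=n\,\overline\lambda$, $\lambda^*-\overline\lambda\le\sqrt n\,{\rm sd}$, to reach the contradiction $\langle r(x_\ell)\nabla r,\vec H_{\mathcal W}(x_\ell)\rangle<0$ against $\vec H_{\mathcal W}\equiv 0$. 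This paragraph is in fact a self-contained intrinsic proof, should one prefer not to invoke the submanifold statement in a degenerate case; no new difficulty arises beyond the bookkeeping just described.
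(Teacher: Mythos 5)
Your proposal is correct and matches the paper exactly: the authors present Corollary \ref{cor:95} precisely ``as a consequence of Theorem \ref{teo:93} for the intrinsic case,'' i.e.\ by taking $\Sigma=M$, $m=n$, so that $B\equiv 0$ gives $n\,\vec H_{\mathcal W}=\Div(\mathcal W)=0$ and both hypotheses of Theorem \ref{teo:93} hold. Your verification of the $\mathcal W$-compatibility of the identity immersion and of the degeneration of the extrinsic estimates to the intrinsic ones is exactly the bookkeeping the paper leaves implicit.
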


\section{Applications of the main results} \label{sec:Applications}
\subsection{New criteria  for weighted manifolds}

Since the weighted case is a particular case of a conductive manifold (more precisely, an isotropic conductive manifold) we can state the following corollary of Theorem \ref{thm:extrinsic2} for isotropic conductivity:

\begin{corollary}\label{extisotropic2}
Let $(M^n,g,\mathcal{W})$ be a conductive Riemannian manifold, with $\mathcal{W}=e^f\cdot \Id$, $\Sigma^m$ a noncompact $\mathcal{W}$-compatible submanifold properly immersed in $M$, $r:M\to \mathbb{R}$ the distance function from the pole $o$ in $M$, and $w(s)$ a smooth function such that $w(0)=0$, $w'(0)=1$ and $w(s)>0$ for all $s>0$.

Suppose that,

\begin{itemize}

\item[(a) ] For any $p\in M- \{o\}$ and any plane $\sigma_p \subset T_p M$ containing $(\nabla r)_p$,
\begin{displaymath}
{\rm sec}(\sigma_p) \leq (\geq) - \frac{w''(r(p))}{w(r(p))}.
\end{displaymath}

\item[(b) ] There exist $\rho>0$, a real number $q>0$ and a  continuous function $\theta:[\rho,+\infty)\to \mathbb{R}$ such that $\partial D_\rho$ is smooth and in $\Sigma - D_\rho$:

\begin{enumerate}
\item  {{$w'(r)(m-q\Vert \nabla^\Sigma r\Vert^2) \geq (\leq)\,\,0$,}} 
\item $ \langle \Vec{H}_f+\nabla f,\nabla r\rangle \geq \theta(r)\,\Vert\nabla^\Sigma r\Vert^2$,
\noindent where $\Vec{H}_f$ is the weighted mean curvature of $\Sigma$ in the weighted manifold $(M,g,{\rm e}^f)$.
\end{enumerate}
\end{itemize}
Then for all $R> \rho$ with $\partial D_R$ smooth,

\begin{equation*}
{\rm Cap}^\Sigma_\mathcal{W}(D_\rho, D_R)={\rm Cap}^f_\Sigma(D_\rho,D_R)\geq (\leq) -\phi'_{\rho,R,h,q}(\rho)\int_{\partial D_\rho}e^f\, \Vert\nabla^\Sigma r\Vert^2\,d\mu_{\partial D_\rho},
\end{equation*}

\noindent where $\phi_{\rho,R,h,q}$ is the function given  in \eqref{fsolmodel} for $n=q$:
\begin{equation}
\phi_{\rho,R,h,q}(r):=\left(\int_r^R w^{1-q}
(s)\,e^{-h(s)}\,ds\right)\,\left(\int_\rho^R w^{1-q}
(s)\,e^{-h(s)}\,ds\right)^{-1},
\end{equation}
with $h(t)=\int_{\rho}^t\theta(s)\,ds$ for any $t\geq \rho$.

Moreover, if

\begin{equation}\label{cond03}
\int_\rho^\infty w^{1-q}(s)\,e^{-h(s)}ds < (=)\, \infty,
\end{equation}

then $\Sigma$ is  $f$-hyperbolic (resp. $f$-parabolic).

\end{corollary}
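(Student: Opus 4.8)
The plan is to derive Corollary \ref{extisotropic2} directly from Theorem \ref{thm:extrinsic2} by specializing to the isotropic conductivity $\mathcal{W}=e^{f}\cdot{\rm Id}$, i.e. by rewriting every hypothesis and every term of that theorem's conclusion through the elementary isotropic identities. First I would record these identities: for $\mathcal{W}=e^{f}\cdot{\rm Id}$ one has $\mathcal{W}(\nabla^{\Sigma}r)=e^{f}\,\nabla^{\Sigma}r$, hence $\langle\mathcal{W}(\nabla^{\Sigma}r),\nabla^{\Sigma}r\rangle=e^{f}\,\Vert\nabla^{\Sigma}r\Vert^{2}$ and ${\rm tr}_{\Sigma}(\mathcal{W})=m\,e^{f}$; moreover, by the weighted expression for the $\mathcal{W}$-mean curvature vector obtained in the Remark following Definition \ref{defWH}, $m\,\vec{H}_{\mathcal{W}}=e^{f}\,(\vec{H}_{f}+\nabla f)$, so that $m\langle\vec{H}_{\mathcal{W}},\nabla r\rangle=e^{f}\langle\vec{H}_{f}+\nabla f,\nabla r\rangle$; and finally the functions of Definition \ref{defConducRiem} reduce to $\mu(p)=\kappa(p)=e^{f(p)}$.

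Next I would verify that hypotheses (a)--(b) of Corollary \ref{extisotropic2} are exactly hypotheses (a)--(b) of Theorem \ref{thm:extrinsic2} under these identifications. Hypothesis (a) is literally the same. For the first inequality in (b), $w'(r)\big({\rm tr}_{\Sigma}(\mathcal{W})-q\,\langle\mathcal{W}(\nabla^{\Sigma}r),\nabla^{\Sigma}r\rangle\big)=e^{f}\,w'(r)\big(m-q\,\Vert\nabla^{\Sigma}r\Vert^{2}\big)$, and since $e^{f}>0$ this carries the prescribed sign if and only if $w'(r)\big(m-q\,\Vert\nabla^{\Sigma}r\Vert^{2}\big)$ does, which is condition (b)(1) of the corollary. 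For the second inequality, $m\langle\vec{H}_{\mathcal{W}},\nabla r\rangle-\theta(r)\,\langle\mathcal{W}(\nabla^{\Sigma}r),\nabla^{\Sigma}r\rangle=e^{f}\big(\langle\vec{H}_{f}+\nabla f,\nabla r\rangle-\theta(r)\,\Vert\nabla^{\Sigma}r\Vert^{2}\big)$, so the inequality $m\langle\vec{H}_{\mathcal{W}},\nabla r\rangle\geq(\leq)\,\theta(r)\langle\mathcal{W}(\nabla^{\Sigma}r),\nabla^{\Sigma}r\rangle$ of Theorem \ref{thm:extrinsic2} is equivalent to condition (b)(2) of the corollary (in the corresponding sign alternative). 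Consequently Theorem \ref{thm:extrinsic2} applies, and rewriting the integrand of its conclusion \eqref{comp_q_ext} via the isotropic identity $\langle\mathcal{W}(\nabla^{\Sigma}r),\nabla^{\Sigma}r\rangle=e^{f}\Vert\nabla^{\Sigma}r\Vert^{2}$ gives the displayed capacity comparison; to replace ${\rm Cap}^{\Sigma}_{\mathcal{W}}(D_{\rho},D_{R})$ by ${\rm Cap}^{f}_{\Sigma}(D_{\rho},D_{R})$ I would invoke Corollary \ref{cor:LapWfoisotropic} together with the remark following \eqref{eqWLap}, which say that for an isotropic conductivity the $\mathcal{W}$-harmonic solution of the Dirichlet problem on $D_{R}-\overline{D}_{\rho}\subset\Sigma$ coincides with the $f$-harmonic solution of the corresponding weighted problem on $(\Sigma,g\vert_{\Sigma},e^{f})$ and that the two capacities agree.

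Finally, the integral condition \eqref{cond03} is precisely condition \eqref{modinfty} of Theorem \ref{thm:extrinsic2} with $n$ replaced by $q$; that theorem then yields $\mathcal{W}$-hyperbolicity (resp. $\mathcal{W}$-parabolicity) of $\Sigma$, and by the Remark following Definition \ref{defTypeRep} this is the same as weighted $f$-hyperbolicity (resp. $f$-parabolicity), which is the asserted conclusion. Since the whole argument is a substitution into an already-proved theorem, I do not expect a substantial obstacle; the only points requiring care are matching the two parallel ``$\geq(\leq)$'' sign alternatives of the corollary with those of Theorem \ref{thm:extrinsic2}, and carrying out cleanly the identification of ${\rm Cap}^{\Sigma}_{\mathcal{W}}$ with ${\rm Cap}^{f}_{\Sigma}$ and of $\mathcal{W}$-type with weighted $f$-type in the isotropic case.
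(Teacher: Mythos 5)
Your proposal is correct and follows essentially the same route as the paper, which likewise proves the corollary by substituting the isotropic identities ${\rm tr}_\Sigma(\W)=m\,e^f$, $\langle\W(\nabla^\Sigma r),\nabla^\Sigma r\rangle=e^f\Vert\nabla^\Sigma r\Vert^2$, and $m\,\Vec{H}_\W=e^f(\Vec{H}_f+\nabla f)$ into Theorem \ref{thm:extrinsic2}. Your additional care in identifying ${\rm Cap}^\Sigma_\W$ with ${\rm Cap}^f_\Sigma$ and $\W$-type with weighted $f$-type only makes explicit what the paper leaves implicit.
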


\begin{proof}
It suffices to note that if $\W=e^f\cdot\Id$, then ${\rm tr}_\Sigma (\W)=m\,e^f$, $\langle \mathcal{W}(\nabla^\Sigma r), \nabla^\Sigma r\rangle=e^f\, \Vert \nabla^\Sigma r \Vert^2$, ${\rm tr}_\Sigma( \nabla \mathcal{W})=e^f\, \nabla^\Sigma f$, and $m\,\Vec{H}_\W=  e^f\,(\Vec{H}_f+\nabla f)$. 
\end{proof}

\subsection{Equivalent metrics} \label{subsec:EquivMetrics}

The objective of this subsection is to endow the smooth manifold $M$ with a Riemannian metric $G$ in such a way that $(M,g,\mathcal{W})$ is $\mathcal{W}$-hyperbolic if and only if $(M,G)$ is hyperbolic (in the usual homogeneous Riemannian sense).

\begin{theorem}\label{teo:81}{
    Let $(M,g, \mathcal{W})$ be a conductive Riemannian manifold with dimension $n={\rm dim}(M)$. Assume either $n=2$ and that there exists $c_1\in \mathbb{R}$ such that
    $$
\frac{1}{c_1}\leq {\rm det}(\W)\leq c_1,
    $$
    or just that $n>2$.\\
    Then $M$ is $\mathcal{W}$-hyperbolic if and only if the Riemannian manifold $(M,G)$ is hyperbolic, where the new metric tensor $G$ is constructed as 
    $$
G(u,v)=\left\lbrace\begin{array}{lcc}
     e^h  g(u,\mathcal{W}^{-1}(v))&{\rm if}&n=2,\\
     f\cdot ({\rm det}\mathcal{W})^\frac{1}{n-2}\cdot g(u,\mathcal{W}^{-1}(v))&{\rm if}&n>2 ,
\end{array}\right.
    $$
    for any smooth function $h:M\to\mathbb{R}$ and any bounded function $f:M\to \mathbb{R}$,
    $$
    \frac{1}{c_2}\leq f(p)\leq c_2,\quad \forall c_2>0,\, \forall p\in M.
    $$}
\end{theorem}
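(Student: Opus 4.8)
The plan is to reduce the statement to a two-sided comparison of capacities and then to invoke the equivalence between (hyper)parabolicity and (non)vanishing capacity from Theorem~\ref{theorGrig}. First I would check that $G$ is genuinely a Riemannian metric: since $\W$ is a smooth, $g$-self-adjoint, positive-definite $(1,1)$-tensor, so is $\W^{-1}$, while $\det\W$ is a smooth positive function; hence, writing $\psi:=f\cdot(\det\W)^{1/(n-2)}$ when $n>2$ and $\psi:=e^{h}$ when $n=2$, the tensor $G(u,v)=\psi\cdot g(u,\W^{-1}v)$ is smooth, symmetric (because $\W^{-1}$ is $g$-self-adjoint) and positive definite. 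Moreover $G(u,v)=g(Au,v)$ with $A:=\psi\,\W^{-1}$ a $g$-self-adjoint positive endomorphism, so the standard rules for the behaviour of the gradient and of the volume element under such a change of metric give, for every Lipschitz $\phi$ and pointwise,
\begin{equation*}
\nabla^{G}\phi=A^{-1}\nabla^{g}\phi,\qquad dV_{G}=\sqrt{\det A}\;dV_{g}=\psi^{n/2}(\det\W)^{-1/2}\,dV_{g},
\end{equation*}
and therefore
\begin{equation*}
|\nabla^{G}\phi|_{G}^{2}\,dV_{G}=\psi^{-1}\langle\W(\nabla^{g}\phi),\nabla^{g}\phi\rangle\cdot\psi^{n/2}(\det\W)^{-1/2}\,dV_{g}=\Theta\cdot\langle\W(\nabla^{g}\phi),\nabla^{g}\phi\rangle\,dV_{g},
\end{equation*}
where $\Theta:=\psi^{(n-2)/2}(\det\W)^{-1/2}$.

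The decisive point is the identity of $\Theta$ in each dimensional regime. When $n>2$, the exponent $1/(n-2)$ in the definition of $G$ is precisely the one that absorbs $\det\W$: $\Theta=f^{(n-2)/2}\,(\det\W)^{\frac{n-2}{2(n-2)}}\,(\det\W)^{-1/2}=f^{(n-2)/2}$, which by the hypothesis $c_{2}^{-1}\le f\le c_{2}$ satisfies $c_{2}^{-(n-2)/2}\le\Theta\le c_{2}^{(n-2)/2}$ everywhere. When $n=2$ the prefactor is $\psi^{(n-2)/2}=\psi^{0}=1$, so the conformal factor $e^{h}$ drops out completely and $\Theta=(\det\W)^{-1/2}$; here it is the extra hypothesis $c_{1}^{-1}\le\det\W\le c_{1}$ that gives $c_{1}^{-1/2}\le\Theta\le c_{1}^{1/2}$. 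In either case there are constants $0<\Theta_{-}\le\Theta_{+}<\infty$ with $\Theta_{-}\le\Theta(p)\le\Theta_{+}$ for all $p\in M$.

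To finish, fix a capacitor $(K,\Omega)$. The admissible class $\mathcal{L}(K,\Omega)$ in the definition of capacity is metric-independent, so for every $\phi\in\mathcal{L}(K,\Omega)$ the pointwise identity above integrates to $\Theta_{-}\int_{\Omega}\langle\W(\nabla\phi),\nabla\phi\rangle\,dV_{g}\le\int_{\Omega}|\nabla^{G}\phi|_{G}^{2}\,dV_{G}\le\Theta_{+}\int_{\Omega}\langle\W(\nabla\phi),\nabla\phi\rangle\,dV_{g}$, and taking infima over $\mathcal{L}(K,\Omega)$ yields
\begin{equation*}
\Theta_{-}\,\WCap(K,\Omega)\;\le\;{\rm Cap}_{G}(K,\Omega)\;\le\;\Theta_{+}\,\WCap(K,\Omega),
\end{equation*}
${\rm Cap}_{G}$ being the classical Riemannian capacity of $(M,G)$. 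Choosing a precompact open set $D$ and an exhaustion $\{\Omega_{i}\}$ of $M$ and letting $i\to\infty$ propagates the inequalities to $\Theta_{-}\WCap(D)\le{\rm Cap}_{G}(D)\le\Theta_{+}\WCap(D)$; hence $\WCap(D)=0$ if and only if ${\rm Cap}_{G}(D)=0$, and likewise for strict positivity. By Theorem~\ref{theorGrig}, applied once to $(M,g,\W)$ and once to $(M,G)$ (i.e.\ with conductivity ${\rm Id}$), this says exactly that $(M,g,\W)$ is $\W$-hyperbolic if and only if $(M,G)$ is hyperbolic.

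I expect the only delicate part to be the bookkeeping in the pointwise energy identity --- especially the volume scaling $dV_{G}=\sqrt{\det(\psi\W^{-1})}\,dV_{g}$ and the verification that in dimension $n>2$ the power $(\det\W)^{1/(n-2)}$ is exactly what collapses $\Theta$ to a power of $f$ alone, while in dimension $2$ the conformal factor is irrelevant but $\det\W$ persists, which is what forces the auxiliary bound on $\det\W$ there. The remaining ingredients (the monotone behaviour of capacity along an exhaustion and the criterion of Theorem~\ref{theorGrig}) are routine; note in particular that no completeness assumption on $G$ is needed, since only the dichotomy ``capacity of a precompact set vanishes or not'' is used.
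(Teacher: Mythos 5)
Your proof is correct and follows essentially the same route as the paper: both establish the pointwise energy-density identity $|\nabla^{G}\phi|_{G}^{2}\,dV_{G}=\psi^{(n-2)/2}(\det\W)^{-1/2}\,\langle\W(\nabla\phi),\nabla\phi\rangle\,dV_{g}$, observe that the prefactor collapses to $f^{(n-2)/2}$ when $n>2$ and to $(\det\W)^{-1/2}$ when $n=2$ (whence the auxiliary bound on $\det\W$ in that case), and deduce a two-sided capacity comparison that, combined with Theorem \ref{theorGrig}, yields the equivalence of types. Your write-up is in fact slightly more explicit than the paper's (invariant formulation via $A=\psi\,\W^{-1}$ rather than coordinates), but the argument is the same.
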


\begin{proof}
    we show that the new metric tensor $G$ has the property that for any open $\Omega\subset M$ and any compact $K\subset \Omega$ {there exists $c>0$ such that}
\begin{equation}\label{eq:diffcap}
    \frac{1}{c}{\rm Cap}^G(K,\Omega)\leq {\rm Cap}_\mathcal{W}(K,\Omega)\leq c \, {\rm Cap}^G(K,\Omega). 
\end{equation} {where ${\rm Cap}^G(K,\Omega)$ is the Riemannian capacity in  $(M,G)$} ,
$$
{\rm Cap}^G(K,\Omega)=\inf_{\phi\in \mathcal{L}(K,\Omega)}\int_\Omega G(\nabla^G \phi,\nabla^G \phi)dV_G,
$$
where $\mathcal{L}(K,\Omega)$ is the set of locally Lipschitz functions on $M$ with compact support on $\overline{\Omega}$ such that $0\leq \phi\leq 1$ and $\phi\vert_K=1$, 
$$
{\rm Cap}_\mathcal{W}(K,\Omega)=\inf_{\phi\in \mathcal{L}(K,\Omega)}\int_\Omega g(\nabla^g \phi,\mathcal{W}(\nabla^g \phi))dV_g.
$$
To prove inequality \eqref{eq:diffcap}  we only need to check that

$$
\frac{1}{c}G(\nabla^G \phi,\nabla^G \phi)dV_G\leq g(\nabla^g \phi,\mathcal{W}(\nabla^g \phi))dV_g\leq c\, G(\nabla^G \phi,\nabla^G \phi)dV_G
$$
for some $c>0$. Indeed, take a point $p\in M$ and take local coordinates $\{x^i\}$ in a neighborhood of $p$, then the new metric tensor is given by
$$
{
G=F  g_{ij} \left(\W^{-1}\right)_k^jdx^i\otimes dx^k,}
$$
{with
$$
F=\left\lbrace\begin{array}{lcc}
    e^h& {\rm if}& n=2,\\
    f (\rm det \W)^\frac{1}{n-2}&{\rm if}& n>2.
\end{array}\right.
$$
}
Then
\begin{equation}
\begin{aligned}
    G(\nabla^G \phi,\nabla^G \phi)dV_G=&\frac{\partial \phi}{\partial x^i}G^{ij}\frac{\partial \phi}{\partial x^j}\sqrt{{\rm det}G}\, dx^1\wedge\cdots\wedge dx^n\\
    =& {F^{\frac{n}{2}-1}\left({\rm det \W}\right)^{-1/2}\frac{\partial \phi}{\partial x^i}\mathcal{W}^i_kg^{kj}\frac{\partial \phi}{\partial x^j}\sqrt{{\rm det}g}\, dx^1\wedge\cdots\wedge dx^n}\\
    =&{ F^{\frac{n-2}{2}}\left({\rm det \W}\right)^{-1/2}g(\nabla^g \phi,\mathcal{W}(\nabla^g \phi))dV_g.}
\end{aligned}
    \end{equation}
{For the case $n=2$
$$
F^{\frac{n-2}{2}}\left({\rm det \W}\right)^{-\frac{1}{2}}=\left({\rm det \W}\right)^{-\frac{1}{2}}.
$$
Thence the proposition follows by taking 
$c=c_1^{-\frac{1}{2}}$. For the case $n>2$
$$
F^{\frac{n-2}{2}}\left({\rm det \W}\right)^{-\frac{1}{2}}=f^{\frac{n-2}{2}},
$$
and the proposition follows by taking $c=c_2^{\frac{n-2}{2}}$.}
\end{proof}

\subsection{New examples of hyperbolic Riemannian manifolds} \label{examp:R3R6}
In this subsection we will provide new examples of hyperbolic Riemannian manifolds by using our results. Given Corollary \ref{cor:95} and Theorem \ref{teo:81} we can build up families of non-parabolic manifolds. Take for instance the Euclidean space $\mathbb{R}^n$ with its canonical metric tensor 
$
g_{\rm can}
$
and its canonical Levi-Civita connection $\nabla$. The coordinate vector fields
$
\{\frac{\partial}{\partial x_i}\}_{i=1}^n
$
parallelizes $\mathbb{R}^n$ and satisfy that
$$
\nabla_{\frac{\partial}{\partial x_i}}\frac{\partial}{\partial x_j}=0,
$$
for any $i,j$. We now endow $(\mathbb{R}^n,g_{\rm can})$ with the following conductivity: 
$$
\mathcal{W}= \left(\sum_{i=1}^{n-1} f_i(x_{n}) \frac{\partial}{\partial x^i}\otimes dx^i \right)+c \frac{\partial}{\partial x^n}\otimes dx^n,
$$
with $f_i: \mathbb{R}\to \mathbb{R}_+$ positive smooth functions and $c$ a positive constant. Observe that
$$
\begin{aligned}
{\rm div }( \mathcal{W})=&\sum_{i=1}^n\nabla\mathcal{W}\left(\frac{\partial}{\partial x^i},\frac{\partial}{\partial x^i}\right)=\sum_{i=1}^n\nabla_{\frac{\partial}{\partial x^i}}\mathcal{W}\left(\frac{\partial}{\partial x^i}\right) -\sum_{i=1}^n\mathcal{W}\left(\nabla_{\frac{\partial}{\partial x^i}}\frac{\partial}{\partial x^i}\right)\\
=& \sum_{i=1}^n\nabla_{\frac{\partial}{\partial x^i}}\mathcal{W}\left(\frac{\partial}{\partial x^i}\right)=\sum_{i=1}^{n-1}\nabla_{\frac{\partial}{\partial x^i}}\mathcal{W}\left(\frac{\partial}{\partial x^i}\right)+\nabla_{\frac{\partial}{\partial x^{n}}}\mathcal{W}\left(\frac{\partial}{\partial x^{n}}\right)\\
=&\sum_{i=1}^{n-1}\nabla_{\frac{\partial}{\partial x^i}}f_i(x_{n})\frac{\partial}{\partial x^i}+\nabla_{\frac{\partial}{\partial x^{n}}}c\frac{\partial}{\partial x^{n}}=0.
\end{aligned}
$$
Moreover,
$$
\overline{\lambda}(\mathcal{W})=\left(\frac{1}{n}\sum_{i=1}^{{n-1}}f_i(x_{n})\right)+\frac{c}{n},
$$
and
$$
\mathrm{sd}(\mathcal{W})=\left(\frac{1}{n}\sum_{i=1}^{{n-1}}\left(f_i(x_{n})-\overline{\lambda}(\mathcal{W})\right)^2+\frac{(c-\overline{\lambda}(\mathcal{W}))^2}{n}\right)^{1/2}.
$$
Our goal is to choose  functions $f_i$ in such a way that Corollary \ref{cor:95} also applies, i.e.:
$$
\rm CV < \frac{n-2}{2\sqrt{n}}.
$$
Then  
$(\mathbb{R}^n,g_{\rm can},\mathcal{W})$ is $\mathcal{W}$-hyperbolic and by using Theorem \ref{teo:81} we can therefore construct a new metric tensor $G$ on $\mathbb{R}^n$ ($n>2$) in such a way that $(\mathbb{R}^n,G)$ is a  hyperbolic manifold in the classical Riemannian sense:
\begin{example} \label{examp:R6}
    In $\mathbb{R}^6$ we choose the $\W$-associated matrix with respect to the  basis $\{\frac{\partial}{\partial x^i}\}_{i=1}^6$ as follows:
    $$
\mathcal{W}:=\left(
\begin{array}{cccccc}
 e^{x_6^2} & 0 & 0 & 0 & 0 & 0 \\
 0 & e^{-x_6^2} & 0 & 0 & 0 & 0 \\
 0 & 0 & e^{x_6^2} & 0 & 0 & 0 \\
 0 & 0 & 0 & e^{x_6^2} & 0 & 0 \\
 0 & 0 & 0 & 0 & e^{x_6^2} & 0 \\
 0 & 0 & 0 & 0 & 0 & 1 \\
\end{array}
\right).
    $$
    Then
    $$
{\rm cv}(\W)=\frac{ \left(e^{x_6^2}-1\right) \sqrt{5+8 e^{x_6^2}+8 e^{2x_6^2} }}{1+e^{x_6^2}+4 e^{2 x_6^2}}\leq \sqrt{\frac{1}{2}}={\rm CV} < \frac{n-2}{2\sqrt{n}} = \frac{4}{2\sqrt{6}}.
    $$
The associated Riemannian manifold $(\mathbb{R}^6,G)$ given by Theorem \ref{teo:81} has metric tensor
    $$
    \begin{aligned}
        G=&e^{-\frac{x_6^2}{4}}dx^1\otimes dx^1+e^{\frac{7 x_6^2}{4}}dx^2\otimes dx^2+ e^{-\frac{x_6^2}{4}}dx^3\otimes dx^3+e^{-\frac{x_6^2}{4}}dx^4\otimes dx^4\\
        & + e^{-\frac{x_6^2}{4}}dx^5\otimes dx^5+e^{\frac{3 x_6^2}{4}}dx^6\otimes dx^6.
    \end{aligned}
    $$
Then, $(\mathbb{R}^6,G)$ is a hyperbolic manifold but in this case $(\mathbb{R}^6,G)$ is not quasi-isometric to $(\mathbb{R}^6,g_{\rm can})$. Moreover, the Ricci tensor ${\rm Ric}_{ij}={\rm Ric}\left(\frac{\partial }{\partial x^i},\frac{\partial }{\partial x^j}\right)$ of $G$ is given by, see specific computations in \cite{gimeno_i_garcia_2025_15854432},
    $$
 \left(
\begin{array}{cccccc}
 \frac{e^{-x_6^2}}{4} & 0 & 0 & 0 & 0 & 0 \\
 0 & -\frac{1}{4} \left(7 e^{x_6^2}\right) & 0 & 0 & 0 & 0 \\
 0 & 0 & \frac{e^{-x_6^2}}{4} & 0 & 0 & 0 \\
 0 & 0 & 0 & \frac{e^{-x_6^2}}{4} & 0 & 0 \\
 0 & 0 & 0 & 0 & \frac{e^{-x_6^2}}{4} & 0 \\
 0 & 0 & 0 & 0 & 0 & \frac{1}{4} \left(-11 x_6^2-3\right) \\
\end{array}
\right).
$$
Then $(\mathbb{R}^6,G)$ has positive and negative sectional curvatures -- in particular it is not a Cartan-Hadamard manifold.
\end{example}

\section{Conductivity tensors with physical or geometric meaning} \label{sec:GeometricConduc}
We present in this Section some examples of conductive Riemannian manifolds $(M, g, \W)$ (and submanifolds $\Sigma$) with conductivities $\W$ that are inherited/extracted from the curvature tensors of $M$ (and the $\W^{\Sigma}$-mean curvatures of $\Sigma$).

\subsection{The Schouten tensor} \label{sec:Schouten}
Let $(M,g)$ be a Riemannian manifold and ${\rm Ric}$ the Ricci tensor of $M$. The scalar curvature ${\rm R}$ is the trace of the Ricci tensor. Associated with the Ricci tensor, we also have a metrically equivalent $(1,1)$-tensor by ${\rm Ric}(X,Y)=\langle {\rm Ric}(X),Y\rangle$, that we will also call ${\rm Ric}$.

When $n\geq 3$, the Schouten tensor is given by
\begin{displaymath}
{\rm S}={\rm Ric}-\frac{{\rm R}}{2(n-1)}\, g.
\end{displaymath}

The Schouten tensor is a symmetric tensor. Let us study when the Schouten tensor is also positive definite so it can be  applied as a conductivity. Let $\{\lambda_i\}_{i=1}^n$ and $\{e_i\}_{i=1}^n$ be the eigenvalues and eigenvectors of the Ricci operator, respectively. Then,
\begin{eqnarray*}
{\rm S}(e_i,e_i)=\lambda_i -\frac{R}{2(n-1)},
\end{eqnarray*}
and the eigenvalues of the Schouten $(1,1)$-tensor ${\rm S}={\rm Ric}-\frac{{\rm R}}{2(n-2)} {\rm Id}$ are 
$$\sigma_i=\lambda_i -\frac{R}{2(n-1)},$$
with the same eigenvectors.\\

As a consequence, the Schouten tensor is positive definite if and only if, $\lambda_i > \frac{{\rm R}}{2(n-1)}$ for all $i=1,\cdots,\,n$. Notice that this condition implies that 
\begin{displaymath}
{\rm R}=\sum_{i=1}^n \lambda_i > \frac{n {\rm R}}{2(n-1)},
\end{displaymath}
and $\frac{(n-2)\, {\rm R}}{2(n-1)}>0$. Then ${\rm R}>0$, $\lambda_i >0$ for all $i$ and consequently ${\rm Ric} >0$. Having a Ricci tensor positive definite is a necessary (but not sufficient) condition for the Schouten tensor to be positively defined.\\

 The trace of the Schouten tensor is given by ${\rm tr}(S)=\frac{n-2}{2(n-1)}{\rm R}$ and  the divergence by ${\rm div}(S)=\frac{n-2}{2(n-1)}\,\nabla {\rm R}=\nabla {\rm tr(S)}$ (see \cite{Alencar2015},\cite{Cheng-Xu}). Then, it is divergence-free if and only if $M$ has constant scalar curvature.\\

Applying Corollary \ref{corollaryC}, we can state the following:
\begin{corollary} Let $(M^n, g)$ be a Riemannian manifold with a pole, $n\geq 3$, and ${\rm sec}\geq 0$. If the Schouten tensor is positive definite, ${\rm Ric}(\nabla r,\nabla r) \geq \frac{n {\rm R}}{4(n-1)}$, and $\langle \nabla r, \nabla R \rangle \leq 0$ on $M-B_\rho$ for some $\rho>0$, then $M$ is ${\rm S}$-parabolic.
\end{corollary}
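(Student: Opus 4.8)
The plan is to apply Theorem~\ref{thm:Rp} in its parenthesized (``$\leq$'') form with conductivity $\mathcal{W}={\rm S}$, warping function $w(r)=r$, exponent $q=2$, and $\theta\equiv 0$ (so $h\equiv 0$); this is essentially the parabolicity half of Corollary~\ref{corollaryC}, the only wrinkle being that here ${\rm S}$ need not be exactly divergence-free --- only its radial component $\langle {\rm div}({\rm S}),\nabla r\rangle$ will be non-positive, which is all that the comparison argument needs in the parabolic direction. Before invoking the theorem I would record that ${\rm S}$ is an admissible conductivity: it is smooth (since ${\rm Ric}$ and ${\rm R}$ are), self-adjoint, and positive definite by hypothesis, so in particular $\langle {\rm S}(\nabla r),\nabla r\rangle>0$ on $M-\{o\}$, where $r$ is smooth because $o$ is a pole.

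Then I would verify the three hypotheses of Theorem~\ref{thm:Rp} on $M-B_\rho$. For (a): since ${\rm sec}\geq 0$ and $w(r)=r$ gives $-w''(r)/w(r)=0$, the bound ${\rm sec}(\sigma_p)\geq -w''(r(p))/w(r(p))$ holds. For (b)(1): using ${\rm tr}({\rm S})=\tfrac{n-2}{2(n-1)}{\rm R}$ and $\langle {\rm S}(\nabla r),\nabla r\rangle={\rm Ric}(\nabla r,\nabla r)-\tfrac{{\rm R}}{2(n-1)}$, the hypothesis ${\rm Ric}(\nabla r,\nabla r)\geq \tfrac{n{\rm R}}{4(n-1)}$ rearranges to
$$
\langle {\rm S}(\nabla r),\nabla r\rangle \ \geq\ \frac{n{\rm R}}{4(n-1)}-\frac{{\rm R}}{2(n-1)} \ =\ \frac{(n-2){\rm R}}{4(n-1)} \ =\ \tfrac12\,{\rm tr}({\rm S}),
$$
that is, ${\rm tr}({\rm S})-2\,\langle {\rm S}(\nabla r),\nabla r\rangle\leq 0$, which is exactly condition (b)(1) with $q=2$ (recall $w'(r)=1>0$). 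For (b)(2): from ${\rm div}({\rm S})=\tfrac{n-2}{2(n-1)}\nabla{\rm R}$ and $n\geq 3$ one gets $\langle {\rm div}({\rm S}),\nabla r\rangle=\tfrac{n-2}{2(n-1)}\langle \nabla{\rm R},\nabla r\rangle\leq 0$ from the hypothesis $\langle \nabla r,\nabla{\rm R}\rangle\leq 0$, so $\langle {\rm div}({\rm S}),\nabla r\rangle\leq \theta(r)\,\langle {\rm S}(\nabla r),\nabla r\rangle$ holds with $\theta\equiv 0$.

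Finally, with $h\equiv 0$, $w(r)=r$ and $q=2$, the integral in the ``moreover'' clause of Theorem~\ref{thm:Rp} becomes $\int_\rho^\infty w^{1-q}(s)\,e^{-h(s)}\,ds=\int_\rho^\infty s^{-1}\,ds=+\infty$, which yields that $M$ is ${\rm S}$-parabolic. (If in addition $M$ has constant scalar curvature, then ${\rm S}$ is genuinely divergence-free and one may quote Corollary~\ref{corollaryC} verbatim.) The only step that takes a moment's thought is the choice $q=2$: it is forced by requiring the assumed lower bound on ${\rm Ric}(\nabla r,\nabla r)$ to be exactly strong enough to give $\langle {\rm S}(\nabla r),\nabla r\rangle\geq \tfrac1q\,{\rm tr}({\rm S})$, and it is harmless for the parabolicity conclusion since $\int^\infty r^{1-q}\,dr$ still diverges at $q=2$. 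No genuinely hard obstacle arises; positive definiteness of ${\rm S}$ (and its consequences ${\rm R}>0$, ${\rm Ric}>0$ noted in the excerpt) enters only to make ${\rm S}$ a legitimate conductivity.
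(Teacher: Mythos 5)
Your proof is correct and follows essentially the same route as the paper: the same choices $w(r)=r$, $q=2$, the same rearrangement of ${\rm Ric}(\nabla r,\nabla r)\geq \tfrac{n{\rm R}}{4(n-1)}$ into $\langle {\rm S}(\nabla r),\nabla r\rangle\geq\tfrac12{\rm tr}({\rm S})$, and the same divergence computation. If anything, your appeal to Theorem~\ref{thm:Rp} with $\theta\equiv 0$ is slightly more careful than the paper's citation of Corollary~\ref{corollaryC}, since the latter formally assumes ${\rm div}({\rm S})=0$ while only the radial non-positivity $\langle{\rm div}({\rm S}),\nabla r\rangle\leq 0$ is actually available (and needed) here.
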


\begin{proof} Applying the bounding condition of ${\rm Ric}(\nabla r, \nabla r)$, we have that

\begin{eqnarray*}
 \langle {\rm S}(\nabla r),\nabla r\rangle&=&{\rm Ric}(\nabla r,\nabla r)- \frac{{\rm R}}{2(n-1)}\\
 &\geq& \frac{n {\rm R}}{4(n-1)}-\frac{{\rm R}}{2(n-1)}=\frac{(n-2){\rm R}}{4(n-1)}=\frac{{\rm tr(S)}}{2}.
\end{eqnarray*}

Moreover, 
$$\langle \nabla r, {\rm div}(S) \rangle=\frac{n-2}{2(n-1)}\langle \nabla r, \nabla R \rangle \leq 0.$$

Then, we can apply  Corollary \ref{corollaryC}  with $\mathcal{W}=S$, $w(r)=r$, and $q=2$ to deduce the ${\rm S}$-parabolicity of $M$ since
$$\int_\rho^\infty \frac{1}{s}\,ds=\infty.$$
\end{proof}

If we consider the conductivity $\mathcal{W}=-S$, reasoning in a similar way, we can state the following

\begin{corollary} Let $(M^n, g)$ be a Riemannian manifold with {a pole,} $n\geq 3$, and ${\rm sec}\leq 0$. If the Schouten tensor is negative definite, ${\rm Ric}(\nabla r,\nabla r) \geq \frac{ {\rm R}}{n}$, and $\langle \nabla r, \nabla R \rangle \leq 0$ on $M-B_\rho$ for some $\rho>0$, then $M$ is $(-{\rm S})$-hyperbolic.
\end{corollary}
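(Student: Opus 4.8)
The plan is to apply the hyperbolic branch of Theorem~\ref{thm:Rp} to the conductivity $\W = -{\rm S}$, mirroring the proof of the preceding corollary but with all the relevant inequalities reversed. First I would check that $-{\rm S}$ is a legitimate conductivity in the sense of Definition~\ref{defConducRiem}: it is smooth and self-adjoint because ${\rm Ric}$ and ${\rm R}$ are, it is positive definite by the standing hypothesis that ${\rm S}$ is negative definite, and its eigenvalue functions (positive and continuous) are readily bracketed between smooth positive functions, e.g.\ between ${\rm tr}(-{\rm S})$ and $\det(-{\rm S})/{\rm tr}(-{\rm S})^{n-1}$. Writing $\{\sigma_i\}$ for the eigenvalues of ${\rm S}$, negative definiteness forces $\sigma_i = \lambda_i - \tfrac{{\rm R}}{2(n-1)} < 0$, and summing these gives $\tfrac{n-2}{2(n-1)}{\rm R} < 0$, hence ${\rm R} < 0$ since $n \ge 3$; in particular ${\rm tr}(-{\rm S}) = -{\rm tr}({\rm S}) = \tfrac{(n-2)(-{\rm R})}{2(n-1)} > 0$.

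Next I would verify hypothesis (b) of Theorem~\ref{thm:Rp} in the hyperbolic direction, with the choices $w(r) = r$, $q = n$ and $\theta \equiv 0$. Hypothesis (a) holds trivially since ${\rm sec} \le 0 = -w''/w$. For (b)(1), the radial Ricci bound ${\rm Ric}(\nabla r,\nabla r) \ge \tfrac{{\rm R}}{n}$ yields
\[
\langle -{\rm S}(\nabla r),\nabla r\rangle \;=\; -{\rm Ric}(\nabla r,\nabla r) + \tfrac{{\rm R}}{2(n-1)} \;\le\; -\tfrac{{\rm R}}{n} + \tfrac{{\rm R}}{2(n-1)} \;=\; \tfrac{-{\rm R}\,(n-2)}{2n(n-1)} \;=\; \tfrac{1}{n}\,{\rm tr}(-{\rm S}),
\]
so that, as $w'(r) = 1 > 0$, the inequality $w'(r)\big({\rm tr}(-{\rm S}) - n\,\langle -{\rm S}(\nabla r),\nabla r\rangle\big) \ge 0$ holds on $M - B_\rho$. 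For (b)(2) I would use ${\rm div}({\rm S}) = \tfrac{n-2}{2(n-1)}\nabla {\rm R}$, so that ${\rm div}(-{\rm S}) = -\tfrac{n-2}{2(n-1)}\nabla {\rm R}$ and $\langle {\rm div}(-{\rm S}),\nabla r\rangle = -\tfrac{n-2}{2(n-1)}\langle \nabla {\rm R},\nabla r\rangle \ge 0$ on $M - B_\rho$ by the hypothesis $\langle \nabla r,\nabla {\rm R}\rangle \le 0$; this is precisely (b)(2) with $\theta \equiv 0$, and hence $h \equiv 0$ in the conclusion.

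With these choices the integrability condition \eqref{condinfty} becomes $\int_\rho^\infty w^{1-q}(s)\,e^{-h(s)}\,ds = \int_\rho^\infty s^{1-n}\,ds = \tfrac{\rho^{2-n}}{n-2} < \infty$, which holds precisely because $1-n \le -2$ when $n \ge 3$; Theorem~\ref{thm:Rp} then gives that $M$ is $(-{\rm S})$-hyperbolic. I do not expect a genuine obstacle here: the content is the bookkeeping that rewrites the curvature hypotheses as conditions (b)(1)--(b)(2), and the only point worth flagging is that two constraints together pin down the comparison parameter $q$ --- admissibility in (b)(1) forces $q \le n$ under the given radial Ricci bound, while convergence of the hyperbolicity integral forces $q > 2$ --- so that the whole scheme closes exactly in the assumed range $n \ge 3$, with the natural choice $q = n$.
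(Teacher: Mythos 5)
Your proposal is correct and follows exactly the route the paper intends: the paper gives no separate proof of this corollary, saying only that one reasons ``in a similar way'' to the positive-definite Schouten case, and your instantiation of the hyperbolic branch of Theorem~\ref{thm:Rp} with $\W=-{\rm S}$, $w(r)=r$, $\theta\equiv 0$ and $q=n$ (so that ${\rm Ric}(\nabla r,\nabla r)\ge \tfrac{{\rm R}}{n}$ yields $\langle -{\rm S}(\nabla r),\nabla r\rangle\le \tfrac{1}{n}{\rm tr}(-{\rm S})$ and $\int_\rho^\infty s^{1-n}\,ds<\infty$ for $n\ge 3$) is precisely that computation. If anything, invoking Theorem~\ref{thm:Rp} directly is slightly cleaner than the paper's citation of Corollary~\ref{corollaryC} (stated for divergence-free conductivities) in the companion corollary, since ${\rm S}$ is divergence-free only when ${\rm R}$ is constant and the sign condition on $\langle\nabla r,\nabla {\rm R}\rangle$ is what is actually used.
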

\begin{remark} Notice that in this case, if the Schouten tensor is negative definite,    $\lambda_i <0$ for all $i$, so ${\rm Ric}<0$ and $R{<}0$ .
\end{remark}

\subsection{The Einstein tensor} \label{sec:Einstein}

Given a Riemannian manifold $(M,g)$ recall that the Einstein tensor is given by
$$
{E}={\rm Ric}-\frac{R}{2}g.
$$
Given an orthonormal basis of $T_pM$ made of eigenvectors of ${\rm Ric}$ this basis will automatically diagonalize $E$.  The point of this section is to use the Einstein tensor as a conductivity. We need hence a positive definite Einstein tensor. 
\begin{lemma}
    Let $(M,g)$ be a Riemannian manifold. Then 
    \begin{enumerate}
        \item If ${\rm sec}<0$ for any point and any tangent plane, $E$ is positive definite.
        \item If ${\rm sec}>0$ for any point and any tangent plane, $-E$ is positive definite.
    \end{enumerate}
\end{lemma}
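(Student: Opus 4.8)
The plan is to diagonalize $E$ in a Ricci eigenbasis and to rewrite each of its eigenvalues as $(-1)$ times a sum of sectional curvatures of coordinate $2$-planes, so that both conclusions become immediate sign statements. First I would fix a point $p\in M$ and choose an orthonormal basis $\{e_i\}_{i=1}^n$ of $T_pM$ consisting of eigenvectors of the Ricci operator, say ${\rm Ric}(e_i)=\lambda_i\,e_i$. Since, as a $(1,1)$-tensor, $E={\rm Ric}-\tfrac{R}{2}{\rm Id}$ differs from ${\rm Ric}$ only by a scalar multiple of the identity, this same basis diagonalizes $E$, with eigenvalues $\varepsilon_i=\lambda_i-\tfrac{R}{2}$ (exactly as was just observed for the Schouten tensor above). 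Thus $E$ is positive definite at $p$ iff all $\varepsilon_i>0$, and $-E$ is positive definite at $p$ iff all $\varepsilon_i<0$.

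Next I would express $\lambda_i$ and $R$ through sectional curvatures. Writing ${\rm sec}(e_k,e_l)$ for the sectional curvature of the plane spanned by $e_k$ and $e_l$, one has ${\rm Ric}(e_i,e_i)=\sum_{j\neq i}{\rm sec}(e_i,e_j)$, hence $R=\sum_i\lambda_i=2\sum_{k<l}{\rm sec}(e_k,e_l)$. Splitting the pair-sum $\sum_{k<l}$ according to whether or not the fixed index $i$ occurs in the pair $\{k,l\}$ gives
\[
\sum_{k<l}{\rm sec}(e_k,e_l)=\sum_{j\neq i}{\rm sec}(e_i,e_j)+\sum_{k<l,\;k,l\neq i}{\rm sec}(e_k,e_l)=\lambda_i+\sum_{k<l,\;k,l\neq i}{\rm sec}(e_k,e_l),
\]
so that $\varepsilon_i=\lambda_i-\tfrac{R}{2}=-\sum_{k<l,\;k,l\neq i}{\rm sec}(e_k,e_l)$; that is, each eigenvalue of $E$ is minus the sum of the sectional curvatures of the coordinate planes avoiding the direction $e_i$.

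Finally, assuming $n\geq 3$ (so that for each $i$ the set $\{1,\dots,n\}\setminus\{i\}$ does contain a pair $\{k,l\}$ — note that for $n=2$ the Einstein tensor vanishes identically), the two assertions follow at once: if every sectional curvature is negative, then each $\varepsilon_i$ is minus a negative number, hence $\varepsilon_i>0$, so $E$ is positive definite at $p$; if every sectional curvature is positive, then each $\varepsilon_i<0$, so $-E$ is positive definite at $p$. Since $p$ was arbitrary, this holds on all of $M$. The argument is essentially bookkeeping with the standard identities relating ${\rm Ric}$ and $R$ to sectional curvatures; the only points needing care are the combinatorial splitting of the pair-sum and the implicit dimension hypothesis $n\geq 3$, and no further geometric input is required.
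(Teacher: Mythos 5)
Your proof is correct and follows essentially the same route as the paper: diagonalize $E$ in a Ricci eigenbasis and rewrite each eigenvalue $E(e_i,e_i)$ as $-\sum_{k<l,\;k,l\neq i}{\rm sec}(e_k,e_l)$, from which both sign statements are immediate. Your explicit observation that the argument needs $n\geq 3$ (since $E\equiv 0$ when $n=2$) is a point the paper leaves implicit, and is worth keeping.
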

\begin{proof}
    Let $X$ be a unit eigenvector of $E$, and hence of ${\rm Ric}$. Let us use the following orthonormal basis of eigenvalues of ${\rm Ric}$:
    $$
\{e_1=X, e_2, \cdots, e_n\}.
    $$
    Thence,
    $$
\begin{aligned}
E(X,X)=E(e_1,e_1)=&{\rm Ric}(e_1,e_1)-\frac{R}{2}=\sum_{i=2}^n{\rm sec}(e_1,e_i)-\frac{\sum_{j\neq k} {\rm sec}(e_j,e_k)}{2}\\
=&\sum_{i=2}^n{\rm sec}(e_1,e_i)-\sum_{j< k} {\rm sec}(e_j,e_k)\\
=&\sum_{k=2}^n{\rm sec}(e_1,e_k)-\sum_{j< k} {\rm sec}(e_j,e_k)\\
=&\sum_{k=2}^n{\rm sec}(e_1,e_k)-\sum_{j=1}^{n-1}\sum_{k>j}^n {\rm sec}(e_j,e_k)\\
=&-\sum_{j=2}^{n-1}\sum_{k>j}^n {\rm sec}(e_j,e_k).
\end{aligned}
    $$
    If ${\rm sec}<0$, we conclude that $E(X,X)>0$ and if ${\rm sec}>0$, we conclude that $-E(X,X)>0$ and the lemma is proved.
\end{proof}
By using our theorems we can state the following results concerning the hyperbolicity
\begin{theorem}
    Let $(M,g)$ be a simply connected Riemannian manifold . Let $E$ be the Einstein tensor. Then:
    \begin{enumerate}
    \item If ${\rm dim}(M)=3$ and 
    $
\frac{3}{2}b\leq {\rm sec}\leq b<0
    $ for some constant $b$, 
    $M$ is $E$-hyperbolic.
    \item If ${\rm dim}(M)=4$ and 
    $
{\rm sec}\leq b<0
    $,
    $M$ is $E$-hyperbolic.
\item If ${\rm dim}(M)>4$ and 
    $
{\rm sec}<0
    $,
    $M$ is $E$-hyperbolic.
    \end{enumerate}
\end{theorem}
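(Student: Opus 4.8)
The strategy is to apply Corollary \ref{corollaryC} to the conductivity $\W=E$, using two structural properties of the Einstein tensor. First, in all three cases the hypothesis forces ${\rm sec}<0$ everywhere, so by the preceding Lemma $E$ is positive definite, hence a genuine conductivity in the sense of Definition \ref{defConducRiem}. Second, $E$ is divergence-free: by the contracted second Bianchi identity ${\rm div}({\rm Ric})=\tfrac12\,dR$, whence ${\rm div}(E)={\rm div}({\rm Ric})-\tfrac12\,dR=0$. Finally, a simply connected complete manifold with ${\rm sec}\le 0$ is Cartan--Hadamard, so any $o\in M$ may be taken as a pole and $r=\dist_g(o,\cdot)$ is smooth on $M-\{o\}$; thus Corollary \ref{corollaryC} is applicable once its two remaining hypotheses (a) and (b) are verified, together with the integral convergence that gives the hyperbolic conclusion.

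The computational core is to rewrite the quantities ${\rm tr}(E)$ and $\langle E(\nabla r),\nabla r\rangle$ entering hypothesis (b) in terms of sectional curvatures. Completing $\nabla r$ to an orthonormal basis $\{e_1=\nabla r,e_2,\dots,e_n\}$ of $T_pM$ and writing $K_{ij}:={\rm sec}(e_i\wedge e_j)<0$, from $R=2\sum_{i<j}K_{ij}$ and ${\rm tr}(E)=\tfrac{2-n}{2}R$ one gets ${\rm tr}(E)=(n-2)\sum_{i<j}|K_{ij}|$, while from $\langle E(\nabla r),\nabla r\rangle={\rm Ric}(\nabla r,\nabla r)-\tfrac{R}{2}$, by the same expansion as in the preceding Lemma, one gets $\langle E(\nabla r),\nabla r\rangle=-\sum_{2\le i<j\le n}K_{ij}=\sum_{2\le i<j\le n}|K_{ij}|$. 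Consequently
\[
{\rm tr}(E)-q\,\langle E(\nabla r),\nabla r\rangle=(n-2)\sum_{j=2}^{n}|K_{1j}|+\big(n-2-q\big)\sum_{2\le i<j\le n}|K_{ij}|.
\]

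For ${\rm dim}(M)=n>4$ (case (3)) I would take $w(r)=r$ (so $-w''/w\equiv 0\ge {\rm sec}$ and $w'\equiv 1>0$) and $q=n-2>2$: the displayed expression reduces to $(n-2)\sum_{j\ge 2}|K_{1j}|\ge 0$, so (b) holds with the ``$\ge$'' branch, while $\int_\rho^\infty w^{1-q}(s)\,ds=\int_\rho^\infty s^{3-n}\,ds<\infty$ because $3-n<-1$; Corollary \ref{corollaryC} then yields $E$-hyperbolicity for any $\rho>0$. For ${\rm dim}(M)=4$ (case (2)) one has $n-2=2$; using ${\rm sec}\le b<0$ I would take the warping function $w(r)=\tfrac{1}{\sqrt{-b}}\sinh(\sqrt{-b}\,r)$ of the space form $\mathbb{K}^4(b)$ (so $-w''/w=b\ge{\rm sec}$ and $w'>0$) and $q=2$, whence the displayed expression equals $2\sum_{j=2}^{4}|K_{1j}|\ge 0$ and $\int_\rho^\infty w^{-1}(s)\,ds<\infty$. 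The delicate case is ${\rm dim}(M)=3$ (case (1)), where $n-2=1$ is too small for the integral to converge with $q\le n-2$; here I would again take $w(r)=\tfrac{1}{\sqrt{-b}}\sinh(\sqrt{-b}\,r)$ and $q=2$, but now both sides of the pinching are needed: from $\tfrac32 b\le {\rm sec}$ one gets $|K_{23}|\le -\tfrac32 b$, and from ${\rm sec}\le b$ one gets $|K_{12}|,|K_{13}|\ge -b$, so
\[
{\rm tr}(E)-2\,\langle E(\nabla r),\nabla r\rangle=|K_{12}|+|K_{13}|-|K_{23}|\ \ge\ 2(-b)-\big(-\tfrac32 b\big)=-\tfrac12\,b>0,
\]
hypothesis (b) holds, and $\int_\rho^\infty w^{-1}(s)\,ds<\infty$ once more gives $E$-hyperbolicity via Corollary \ref{corollaryC}. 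The main obstacle is precisely this three-dimensional case: one must balance the single free parameter $q$ against the two-sided curvature pinching so that $w'\big({\rm tr}(E)-q\langle E(\nabla r),\nabla r\rangle\big)\ge 0$ and $\int_\rho^\infty w^{1-q}<\infty$ hold simultaneously; with the sectional-curvature identities above in hand, the rest is routine bookkeeping plus the standing facts that $E$ is positive definite and divergence-free.
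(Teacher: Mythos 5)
Your proof is correct and follows essentially the same route as the paper: positive definiteness of $E$ from the preceding Lemma, ${\rm div}(E)=0$ from the contracted Bianchi identity, and an application of Corollary \ref{corollaryC} with $w(t)=t$ or $w(t)=\tfrac{1}{\sqrt{-b}}\sinh(\sqrt{-b}\,t)$ and a suitable $q$, checking hypothesis (b) and the convergence of $\int_\rho^\infty w^{1-q}$. The only (immaterial) differences are that you take $q=n-2$ for $n>4$ where the paper takes $q=3$, and you verify (b) by expanding ${\rm tr}(E)$ and $\langle E(\nabla r),\nabla r\rangle$ in sectional curvatures in a basis adapted to $\nabla r$ rather than via eigenvalue bounds of $E$; both computations are equivalent.
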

\begin{proof}
    For the $3$-dimensional case let us denote as $\{e_1, e_2, e_3\}$ the orthonormal basis of eigenvectors of the Ricci tensor of $T_pM$ then,
    the eigenvalues $\{\sigma_1, \sigma_2, \sigma_3\}$ of the Einstein tensor are given by 
    $$
\begin{aligned}
    \sigma_1=& -{\rm sec}(e_2,e_3),\\
    \sigma_2=&-{\rm sec}(e_1,e_3),\\
    \sigma_3=&-{\rm sec}(e_1,e_2).
\end{aligned}
    $$
    By the hypothesis on the sectional curvatures therefore:
    $$
\sigma^*:=\max\{\sigma_1, \sigma_2, \sigma_3\}\leq -\frac{3}{2}b
    $$
    and
    $$
\sigma_*:=\min\{\sigma_1, \sigma_2, \sigma_3\}\geq -b.
    $$
    We are applying Corollary \ref{corollaryC} with $\mathcal{W}=E$, $w(t)=\sinh{\sqrt{-b}\,t}$ and ${q}=2$. We know that ${\rm div} (E)=0$ and 
    $$
\begin{aligned}
    \langle \mathcal{W}(\nabla r), \nabla r\rangle=&\sum_{i=1}^3 \sigma_i \langle \nabla r, e_i\rangle^2\leq \sigma^*\leq -\frac{3}{2}b\\
    \leq &  \frac{3\sigma_*}{{q}}\leq \frac{\sigma_1+\sigma_2+\sigma_3}{{q}}=\frac{{\rm tr}(\mathcal{W})}{{q}}\cdot
\end{aligned}
    $$
 Finally we conclude that $M$ is $E$-hyperbolic because $\int_1^\infty\sinh(\sqrt{-b}s)^{-1}ds<\infty$.\\
 
 For the $n$-dimensional case we know that
 $$
{\rm tr}(E)=-\left(\frac{n}{2}-1\right)R,
 $$
 Thence with an accurate choice of ${q}$ we can obtain
$$
\begin{aligned}
    \langle E(\nabla r), \nabla r\rangle&={\rm Ric}(\nabla r,\nabla r)-\frac{R}{2}\leq -\frac{R}{2}
    \leq &  \frac{-(\frac{n}{2}-1)R}{{q}}=\frac{{\rm tr}(E)}{{q}}\cdot
\end{aligned}
    $$

For $n=4$ we can choose ${q}=2$ and the theorem follows by using $w(t)=\sinh{\sqrt{-b}t}$ (to ensure that $\int_1^\infty\sinh(\sqrt{-b}s)^{-1}ds<\infty$). For $n>4$ we can choose ${q}=3$ and $w(t)=t$ (to ensure that $\int_1^\infty s^{{-2}}ds<\infty$) and the theorem follows.

\end{proof}

For the parabolicity we can state that
\begin{theorem}
    Let $(M,g)$ be a $3$-dimensional Riemannian manifold with a pole $o\in M$, let $r:M\to \mathbb{R}$ the distance function from $o$. Suppose that  any tangent plane has positive sectional curvature and
    $$
{\rm Ric}(\nabla r,\nabla r)\leq \frac{1}{4}R.
    $$
    Then, $M$ is $(-E)$-parabolic.
\end{theorem}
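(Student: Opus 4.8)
The plan is to obtain $(-E)$-parabolicity directly from the divergence-free comparison result, Corollary \ref{corollaryC}, applied with conductivity $\mathcal{W}=-E$, warping function $w(r)=r$, and parameter $q=2$. First I would check that $-E$ is an admissible conductivity on $M$: it is self-adjoint because ${\rm Ric}$ and $g$ are symmetric, and it is positive definite by item (2) of the Lemma in Section \ref{sec:Einstein}, since every tangent plane of $M$ has strictly positive sectional curvature. Moreover ${\rm div}(-E)=0$ by the contracted second Bianchi identity (the divergence-freeness of the Einstein tensor already invoked in Section \ref{sec:Einstein}), so the standing hypothesis of Corollary \ref{corollaryC} is satisfied. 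Since $o$ is a pole, $M$ is complete, noncompact, and $r$ is smooth on $M-\{o\}$ (no cut locus), so the framework of Corollary \ref{corollaryC} applies with no cut-locus subtleties; I would also record that on a $3$-manifold with ${\rm sec}>0$ one has $R=2\sum_{i<j}{\rm sec}(e_i,e_j)>0$ everywhere, which will be used below.

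Next I would verify the two structural hypotheses of Corollary \ref{corollaryC} in the branch that produces parabolicity, i.e. the ``$\geq$'' alternative in (a), the ``$\leq$'' alternative in (b), and the divergent integral. Hypothesis (a) with $w(r)=r$ asks for ${\rm sec}(\sigma_p)\geq -w''(r)/w(r)=0$, which holds by assumption. For hypothesis (b) I would compute, in dimension $3$, that ${\rm tr}(-E)=-\big({\rm tr}({\rm Ric})-\tfrac{3}{2}R\big)=\tfrac12 R$ and $\langle -E(\nabla r),\nabla r\rangle=\tfrac12 R-{\rm Ric}(\nabla r,\nabla r)$, the latter being positive by positive-definiteness of $-E$. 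Since $w'(r)=1>0$ and $q=2$, condition (b) becomes $w'(r)\big({\rm tr}(-E)-2\langle -E(\nabla r),\nabla r\rangle\big)\leq 0$, that is $\tfrac12 R\leq 2\big(\tfrac12 R-{\rm Ric}(\nabla r,\nabla r)\big)$, which rearranges to exactly ${\rm Ric}(\nabla r,\nabla r)\leq\tfrac14 R$ — precisely the standing hypothesis. Thus (b) holds for any $\rho>0$, say $\rho=1$. (The choice $q=2$ is the largest one compatible with the integral requirement below, and it is exactly this borderline value that turns the eigenvalue-balance condition into the hypothesized Ricci bound.)

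Finally, the integral condition of Corollary \ref{corollaryC} in the parabolic branch reads $\int_\rho^\infty w^{1-q}(s)\,ds=\int_1^\infty s^{-1}\,ds=\infty$, so the corollary yields
$$
{\rm Cap}_{-E}(B_1,B_R)\;\leq\; -\phi'_{1,R,0,2}(1)\,{\rm Vol}_{-E}(\partial B_1),\qquad -\phi'_{1,R,0,2}(1)=\Big(\int_1^R s^{-1}\,ds\Big)^{-1}=(\log R)^{-1},
$$
with ${\rm Vol}_{-E}(\partial B_1)<\infty$ since $\partial B_1$ is compact. Letting $R\to\infty$ gives ${\rm Cap}_{-E}(B_1)=0$, so $M$ is $(-E)$-parabolic by Theorem \ref{theorGrig}. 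I do not expect a genuine obstacle here: the only points requiring care are confirming that it is $-E$ (not $E$) that is positive definite under ${\rm sec}>0$, and keeping track of which inequality alternative in Corollary \ref{corollaryC} corresponds to the parabolic conclusion, together with the short arithmetic showing that $q=2$ converts hypothesis (b) into exactly ${\rm Ric}(\nabla r,\nabla r)\leq\tfrac14 R$.
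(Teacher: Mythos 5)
Your proposal is correct and follows essentially the same route as the paper: both apply Corollary \ref{corollaryC} with $\mathcal{W}=-E$, $w(t)=t$ and $q=2$, using ${\rm tr}(-E)=\tfrac12 R$ and $\langle -E(\nabla r),\nabla r\rangle=\tfrac12 R-{\rm Ric}(\nabla r,\nabla r)\geq\tfrac14 R$ to verify hypothesis (b), and the divergence of $\int_1^\infty s^{-1}\,ds$ to conclude parabolicity. The only difference is that you spell out the admissibility of $-E$ as a conductivity (positive definiteness and divergence-freeness), which the paper leaves implicit from the preceding lemma and discussion.
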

\begin{proof} We have that
    $$
    \begin{aligned}
        &\langle -E(\nabla r),\nabla r\rangle=\frac{R}{2}-{\rm Ric}(\nabla\,r,\nabla\,r), 
    \end{aligned}
    $$
    and 
    $$
{\rm tr}(-E)=\frac{1}{2}R.
    $$
    Thence
    $$
\langle -E(\nabla\, r),\nabla\, r\rangle\geq \frac{1}{4}R.
    $$
We can apply Corollary \ref{corollaryC}  with ${q=2}$ and $w(t)={t}$  and the theorem follows since
$$
\int_1^\infty s^{-1} ds=\infty.
$$
\end{proof}

\subsection{The first Newton transformation} \label{sec:Newton}
Let $M^n$ be an $n$-dimensional Riemannian manifold and $x: M^n \to \widetilde M^{n+1}$ an isometric immersion of $M$ into an $(n+1)$-dimensional Riemannian manifold. Denote by $A=-\nabla N$ the shape operator and by $\lambda_i$ the eigenvalues of $A$. The first Newton transformation $P_1: TM \to TM$ is given by
$$P_1= n\,H I-A,$$
where $H=\displaystyle\frac{1}{n}\sum_{i=1}^n \lambda_i$ is the mean curvature of $M$ with respect the unit normal $N$. If $A>0$, then $P_1$ is positive definite. Moreover, it was shown by Reilly \cite{Reilly} that if $\widetilde M^{n+1}$ is a space of constant sectional curvature then $P_1$ is divergence-free. This conclusion also holds true  if $\widetilde M^{n+1}$ is an Einstein manifold, see \cite{Alencar2015}. 
\begin{corollary}
Let $M^n$ be a manifold with a pole immersed as hypersurface in an Einstein manifold such that ${\rm sec}_M \leq\, 0$ for radial planes. If $A>0$ and $\langle A(\nabla r),\nabla r \rangle \geq \frac{3-n+1}{3}n\, H$, then $M$ is $P_1$-hyperbolic.
\end{corollary}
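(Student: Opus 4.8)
The plan is to obtain this as an immediate application of Corollary \ref{corollaryC} to the divergence-free conductivity $\W = P_1$, with warping function $w(t) = t$ and parameter $q = 3$.

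First I would record the algebraic properties of the first Newton transformation. Since $A > 0$, writing $\{e_i\}_{i=1}^n$ for an orthonormal eigenbasis of $A$ with $A(e_i) = \lambda_i e_i$ and $\lambda_i > 0$, the operator $P_1 = nH\,{\rm Id} - A$ has eigenvalues $nH - \lambda_i = \sum_{j\neq i}\lambda_j > 0$, so $P_1$ is positive definite and is a legitimate conductivity. Taking traces gives ${\rm tr}(P_1) = n\cdot nH - nH = n(n-1)H$, and since $\Vert\nabla r\Vert = 1$,
\[
\langle P_1(\nabla r),\nabla r\rangle = nH - \langle A(\nabla r),\nabla r\rangle .
\]
Moreover, because the ambient space $\widetilde M^{n+1}$ is Einstein, Reilly's computation quoted above (see \cite{Reilly}, \cite{Alencar2015}) gives ${\rm div}(P_1) = 0$, so the divergence-free hypothesis of Corollary \ref{corollaryC} is met.

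Next I would check the two remaining hypotheses of Corollary \ref{corollaryC} in its ``$\leq$'' branch (the one that yields $\W$-hyperbolicity). Hypothesis (a) with $w(t) = t$ demands ${\rm sec}(\sigma_p) \leq -w''(r)/w(r) = 0$ on every radial plane, which is exactly the assumption ${\rm sec}_M \leq 0$ for radial planes. For hypothesis (b), $w'(r) = 1 > 0$, so the balance condition $w'(r)\big({\rm tr}(P_1) - q\,\langle P_1(\nabla r),\nabla r\rangle\big) \geq 0$ reads
\[
n(n-1)H - 3\big(nH - \langle A(\nabla r),\nabla r\rangle\big) \geq 0 ,
\]
equivalently $\langle A(\nabla r),\nabla r\rangle \geq nH - \tfrac{1}{3}n(n-1)H = \tfrac{3-n+1}{3}\,nH$, which is precisely the stated hypothesis on $\langle A(\nabla r),\nabla r\rangle$.

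Finally, with $q = 3$ and $w(t) = t$ the integral test \eqref{condinftyfree} becomes $\int_\rho^\infty w^{1-q}(s)\,ds = \int_\rho^\infty s^{-2}\,ds < \infty$, so Corollary \ref{corollaryC} concludes that $M$ is $P_1$-hyperbolic. I do not expect a genuine obstacle here: the proof is essentially bookkeeping, and the only points deserving care are the identity ${\rm tr}(P_1) = n(n-1)H$ and the observation that the coefficient $\tfrac{3-n+1}{3}$ in the statement is simply $1 - \tfrac{n-1}{3}$, i.e. what results from clearing denominators in hypothesis (b) with $q = 3$.
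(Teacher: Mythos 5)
Your proposal is correct and follows essentially the same route as the paper: both apply Corollary \ref{corollaryC} with $\W=P_1$, $w(t)=t$ and $q=3$, using ${\rm tr}(P_1)=n(n-1)H$, the divergence-freeness of $P_1$ in an Einstein ambient space, and the hypothesis on $\langle A(\nabla r),\nabla r\rangle$ to verify the balance condition, then concluding via $\int_\rho^\infty s^{-2}\,ds<\infty$. The extra details you supply (positive definiteness of $P_1$ and the explicit check of hypothesis (a)) are consistent with, and slightly more complete than, the paper's own argument.
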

\begin{proof}
Since ${\rm tr}(P_1)=(n-1)\,n\, H$ and ${\rm div} (P_1)=0$,
\begin{eqnarray*}
\langle P_1(\nabla r), \nabla r \rangle &=& n\,H-\langle A(\nabla r),\nabla r \rangle \leq n\, H- \frac{3-n+1}{3}n\, H\\
&=&\frac{n-1}{3}n\,H=\frac{{\rm tr}(P_1)}{3}.
\end{eqnarray*}
Then, applying Corollary \ref{corollaryC}  with $w(r)=r$, $W=P_1$ and  $q=3$, the result follows.
\end{proof}
{{
\begin{remark} The result is also true for all $q>2$. Since $A>0$ and then $H>0$, using $2<q\leq 3$, we obtain that
$$
\frac{q-n+1}{q}n\, H\leq \frac{3-n+1}{3}n\, H.
$$

\end{remark}
}}
\subsection{Compressible gas} In $\mathbb{R}^n$, a gas is described by a mass density $\rho\geq 0$, a velocity $u$ and a pressure $p\geq \geq 0$ that obey the Euler equations. If the flow is steady (for instance, $\rho$, $u$ and $p$ are independent of time), then 
$$A=\rho u \otimes u+p I$$

is a divergence-free positive symmetric tensor (see \cite{serre}).
\begin{corollary} Let $\theta$ be the angle between the velocity $u$ of a steady compressible gas and the vector field $\nabla r$ in the Euclidean space $\mathbb{R}^n$. Then, 
\begin{itemize}
\item[a) ] If $n\geq 3$ and ${\rm cos}^2(\theta) \leq 1/3$ then $\mathbb{R}^n$ is $A$-hyperbolic. 
\item[b) ] If ${\rm cos}^2(\theta) \geq 1/2$, then $\mathbb{R}^2$ is $A$-parabolic. 
\end{itemize}

Here, $A=\rho\, u \otimes u+p I$, and $\rho,\, p$ are the mass density and the pressure of the gas, respectively.
\end{corollary}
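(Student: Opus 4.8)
The plan is to recognize $A = \rho\,u\otimes u + p\,\Id$ as a conductivity on $(\mathbb{R}^n,g_{\rm can})$ and to apply Corollary \ref{corollaryC}, the divergence-free comparison result, with model warping function $w(s)=s$. First I would record three structural facts. The tensor $A$ is symmetric; it is positive definite as soon as $p>0$ (its eigenvalues are $p$ with multiplicity $n-1$ on $u^{\perp}$ and $p+\rho\,|u|^{2}$ along $u$), so it qualifies as a conductivity in the sense of Definition \ref{defConducRiem}; and $\operatorname{div}(A)=0$, which is precisely the momentum-conservation equation of the steady compressible Euler system (cf. \cite{serre}). I would also rename the radius appearing in Corollary \ref{corollaryC} to $\rho_{0}$, to avoid clashing with the gas density $\rho$.

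Next I would compute the two quantities entering hypothesis (b) of Corollary \ref{corollaryC}. Since $\operatorname{tr}(u\otimes u)=|u|^{2}$ and $\operatorname{tr}(\Id)=n$, we get $\operatorname{tr}(A)=\rho\,|u|^{2}+n\,p$; and since $\|\nabla r\|=1$ and $\langle u,\nabla r\rangle^{2}=|u|^{2}\cos^{2}\theta$, we get $\langle A(\nabla r),\nabla r\rangle=\rho\,\langle u,\nabla r\rangle^{2}+p=\rho\,|u|^{2}\cos^{2}\theta+p$. Hence for any $q>0$,
\[
\operatorname{tr}(A)-q\,\langle A(\nabla r),\nabla r\rangle=\rho\,|u|^{2}\,\bigl(1-q\cos^{2}\theta\bigr)+(n-q)\,p .
\]
(To avoid the ambiguity of $\theta$ at points where $u=0$ one may keep $\langle u,\nabla r\rangle^{2}$ instead of $|u|^{2}\cos^{2}\theta$ throughout; the inequalities below are then trivially satisfied there.)

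For part (a) I would take $w(s)=s$, so that $\operatorname{sec}\equiv 0=-w''/w$ satisfies hypothesis (a) of Corollary \ref{corollaryC} in its ``$\leq$'' form, and $q=3$. Because $n\geq 3$, $\rho\geq 0$, $p>0$ and $\cos^{2}\theta\leq 1/3$, the displayed expression is $\geq 0$ on all of $\mathbb{R}^{n}\setminus\{0\}$, which is hypothesis (b) in its ``$\geq 0$'' form. Since $\int_{\rho_{0}}^{\infty}w^{1-q}(s)\,ds=\int_{\rho_{0}}^{\infty}s^{-2}\,ds<\infty$, Corollary \ref{corollaryC} gives that $\mathbb{R}^{n}$ is $A$-hyperbolic. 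For part (b), with $n=2$ I would take $w(s)=s$ and $q=2$; then the displayed expression equals $\rho\,|u|^{2}\,(1-2\cos^{2}\theta)\leq 0$ because $\cos^{2}\theta\geq 1/2$, which is hypothesis (b) in its ``$\leq 0$'' form (hypothesis (a) again being satisfied with ``$\geq$'' since $\operatorname{sec}\equiv 0$), and $\int_{\rho_{0}}^{\infty}s^{-1}\,ds=\infty$, so Corollary \ref{corollaryC} gives that $\mathbb{R}^{2}$ is $A$-parabolic.

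The computations are short, and the only points requiring genuine care are structural rather than analytic: confirming that $A$ really satisfies the positive-definiteness requirement of a conductivity (which forces $p>0$, slightly stronger than the $p\geq 0$ usually imposed), and checking that ``divergence-free'' is the correct reading of the steady Euler momentum equation so that the $h=0$ case of the comparison in Corollary \ref{corollaryC} applies. Everything else is an instance of the $w(s)=s$, zero-curvature specialization already used for the Schouten, Einstein, and Newton-transform examples above.
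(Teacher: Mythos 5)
Your proposal is correct and follows essentially the same route as the paper: compute $\operatorname{tr}(A)=\rho|u|^{2}+np$ and $\langle A(\nabla r),\nabla r\rangle=\rho|u|^{2}\cos^{2}\theta+p$, then invoke Corollary \ref{corollaryC} with $w(s)=s$ and $q=3$ for part (a), $q=2$ for part (b). Your added remarks on positive definiteness (forcing $p>0$), the divergence-free property from the steady Euler equations, and the notational clash between the gas density and the comparison radius are sensible housekeeping that the paper leaves implicit.
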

\begin{proof} By the definition of $A$,
$$\langle A(\nabla r),\nabla r\rangle=\rho \langle u, \nabla r\rangle^2 +p\quad \textrm{and}\quad {\rm tr}(A)=\rho\, |u|^2+n\,p.$$\\
Then, if $n\geq 3$
\begin{eqnarray*}
\frac{{\rm tr}(A)}{3}&=&\frac{\rho\, |u|^2+n\,p}{3}\geq 
\rho \frac{|u|^2}{3}  +p\\
&\geq& \rho |u|^2 {\rm cos}^2(\theta) +p =\langle A(\nabla r),\nabla r \rangle. 
\end{eqnarray*}
So applying Corollary \ref{corollaryC}   with $w(r)=r$, $\mathcal{W}=A$, and $q=3$ , we obtain a). Reasoning in a similar way, part b) is a consequence of Corollary \ref{corollaryC}  for $n=q=2$.
\end{proof}

\subsection{Extrinsic examples} \label{subsec:ExtrinExamps}

{In this section we shall see how to apply our results from an extrinsic point of view.}

\begin{example}[Example with zero Gaussian curvature]
    Let us consider the Euclidean space $\mathbb{R}^3$ with the canonical metric tensor $g=dx\otimes dx+dy\otimes dy+dz\otimes dz$. The following vector fields on $\mathbb{R}^3-\{z-\rm{axis}\}$,
    $$
    \left\lbrace
    \begin{array}{rl}
    E_1=&\frac{x}{\sqrt{x^2+y^2}}\partial x-\frac{y}{\sqrt{x^2+y^2}}\partial y       \\
     E_2=&\frac{y}{\sqrt{x^2+y^2}}\partial x+\frac{x}{\sqrt{x^2+y^2}}\partial y     \\
     E_3=&\partial z,
    \end{array}
    \right.
    $$
and the associated $1$-forms
$$ \Theta^1=\frac{x}{\sqrt{x^2+y^2}}dx-\frac{y}{\sqrt{x^2+y^2}}dy,\quad \Theta^2=\frac{y}{\sqrt{x^2+y^2}}dx+\frac{x}{\sqrt{x^2+y^2}}dy,\quad \Theta^3=dz,
$$
satisfy that
$$
\begin{aligned}
    \Theta^i(E_j)=&\delta^i_j,\\
g=&\Theta^1\otimes \Theta^1+\Theta^2\otimes \Theta^2+\Theta^3\otimes \Theta^3,\\
\nabla_{E_2}E_2=&\frac{x^2-y^2}{\left(x^2+y^2\right)^{3/2}}E_1, \\
\nabla_{E_3}E_3=&0.
\end{aligned}
$$
The one-parametric family of surfaces 
$$
\Sigma_\sigma:=\left\lbrace(x,y,z)\in \mathbb{R}^3\, :\, x^2-y^2=\sigma\right\rbrace
$$
is a family of embedded smooth surfaces for $\sigma\neq 0$ and contains a smooth immersed surface for $\sigma=0$ . We will focus in the case $\sigma\neq 0$ where $\Sigma_\sigma$ does not meet the $z$-axis.  In such a case, we can choose as unit normal vector $\nu$ to $\Sigma_\sigma$ the vector field $E_1$ and $\{E_2,E_3\}$ as an orthonormal basis of the tangent space. The mean curvature vector field is given by
$$
\vec{H}=\frac{1}{2}\frac{x^2-y^2}{\left(x^2+y^2\right)^{3/2}}E_1=\frac{1}{2}\frac{\sigma}{\left(x^2+y^2\right)^{3/2}}E_1,
$$
and the tangential component of the gradient of the distance function is given by 
$$
\begin{aligned}
    \nabla^{\Sigma}r=&\langle \nabla^{\Sigma} r,E_2\rangle E_2+\langle \nabla^{\Sigma} r,E_3\rangle E_3\\
    =& E_2(r)E_2+E_3(r)E_3=\frac{2 x y}{\sqrt{x^2+y^2} \sqrt{x^2+y^2+z^2}}E_2+\frac{z}{\sqrt{x^2+y^2+z^2}}E_3.
\end{aligned}
$$
Choosing  positive functions $\lambda_1,\lambda_2$ and $\lambda_3$ the following $(1,1)$ tensor field
$$
\mathcal{W}=\lambda_1(x,y,z) E_1\otimes\theta^1+\lambda_2(x,y,z) E_2\otimes\theta^2+\lambda_3(x,y,z) E_3\otimes\theta^3
$$
endows $(\mathbb{R}^3- \{z-{\rm axis}\},g,\mathcal{W})$ as a conductive Riemannian manifold and $\Sigma_\sigma$ as a non-compact $\mathcal{W}$-compatible submanifold properly immersed in  $\mathbb{R}^3- \{z-{\rm axis}\}$. In order to apply Theorem \ref{thm:extrinsic2} we need to compute the following:
$$
\begin{aligned}
    {\rm tr}_\Sigma(\mathcal{W})=&\lambda_2+\lambda_3,\\
    \langle \mathcal{W}(\nabla^\Sigma r), \nabla^\Sigma r\rangle=&\frac{4 x^2 y^2 \lambda_2}{(x^2+y^2) (x^2+y^2+z^2)}+\frac{z^2\lambda_3}{x^2+y^2+z^2},\\
    {\rm tr}_\Sigma (\nabla \mathcal{W})=&\nabla_{E_2}\mathcal{W}(E_2)-\mathcal{W}(\nabla_{E_2}E_2)+\nabla_{E_3}\mathcal{W}(E_3)-\mathcal{W}({\nabla_{E_3}E_3})\\
    =&\nabla_{E_2}(\lambda_2 E_2)-\lambda_1\frac{x^2-y^2}{\left(x^2+y^2\right)^{3/2}}E_1+\frac{\partial \lambda_3}{\partial z}E_3\\
    =&(\lambda_2-\lambda_1)\frac{x^2-y^2}{\left(x^2+y^2\right)^{3/2}}E_1+\left(\frac{y}{\sqrt{x^2+y^2}}\frac{\partial \lambda_2}{\partial x}+\frac{x}{\sqrt{x^2+y^2}}\frac{\partial \lambda_2}{\partial y}\right)E_2
    \\&+\frac{\partial \lambda_3}{\partial z}E_3.
    \end{aligned}
    $$
   Then, 
   $$
    \begin{aligned}
2\vec{H}_{\mathcal{W}}=& \frac{\sigma\lambda_2}{\left(x^2+y^2\right)^{3/2}}E_1+\frac{1}{\sqrt{x^2+y^2}}\left(y\frac{\partial \lambda_2}{\partial x}+x\frac{\partial \lambda_2}{\partial y}\right)E_2 +\frac{\partial \lambda_3}{\partial z}E_3,\\
\langle2\vec{H}_{\mathcal{W}}, \nabla r\rangle=&\frac{\sigma\lambda_2}{\left(x^2+y^2\right)^{3/2}}E_1(r)+\frac{1}{\sqrt{x^2+y^2}}\left(y\frac{\partial \lambda_2}{\partial x}+x\frac{\partial \lambda_2}{\partial y}\right)E_2(r) +\frac{\partial \lambda_3}{\partial z}E_3(r)\\
=&\frac{\sigma^2\lambda_2}{r\left(x^2+y^2\right)^{2}}+\frac{2xy}{(x^2+y^2)r}\left(y\frac{\partial \lambda_2}{\partial x}+x\frac{\partial \lambda_2}{\partial y}\right) +\frac{\partial \lambda_3}{\partial z}\frac{z}{r},
\end{aligned}
$$
and
$$
\begin{aligned}
    \langle \mathcal{W}(\nabla^\Sigma r), \nabla^\Sigma r\rangle=&\frac{4 x^2 y^2 \lambda_2}{(x^2+y^2) (x^2+y^2+z^2)}+\frac{z^2\lambda_3}{ (x^2+y^2+z^2)}\\
    \leq& \frac{1}{ (x^2+y^2+z^2)}\left( (x^2+ y^2) \lambda_2+z^2\lambda_3\right)\leq \lambda_2+\lambda_3=    {\rm tr}_\Sigma (\mathcal{W}).
\end{aligned}
$$
By choosing $q=1$ condition b.1) of Theorem \ref{thm:extrinsic2} is fulfilled. 
Now we choose $\lambda_2=e^{\frac{r}{2}}$  and $\lambda_3=e^{r}$, then 
$$
\begin{aligned}
\langle2\vec{H}_{\mathcal{W}}, \nabla r\rangle=& \frac{\sigma^2 e^{\frac{r}{2}}}{\rho^4r}+\frac{2x^2y^2e^{\frac{r^2}{2}}}{(x^2+y^2)r^2}+\frac{z^2e^{r^2}}{r^2}\geq \frac{2x^2y^2e^{\frac{r^2}{2}}}{(x^2+y^2)r^2}+\frac{z^2e^{r^2}}{2r^2}\\
=&\frac{1}{2}\langle \mathcal{W}(\nabla^\Sigma r), \nabla^\Sigma r\rangle.
\end{aligned}
$$ Hence we can use the function 
$$
\theta(r)=\frac{1}{2}
$$
for condition b.2) in the above theorem and thence for $\rho=1$,
$$
h(t)=\frac{{t-1}}{2}.
$$
We conclude that $\Sigma_\sigma$ is $\mathcal{W}$-hyperbolic since
$$
\int_1^\infty e^{{\frac{1-s}{2}}}ds={2}<\infty.
$$
\end{example}

\begin{example}[Example with positive Gaussian curvature ] \label{examp:Paraboloid}
In this example we will consider a paraboloid (with positive Gaussian curvature, and thus parabolic in the Riemannian sense) immersed in the $3$-dimensional Euclidean space. We will endow $\mathbb{R}^3$ with a specific conductivity and use our results to show that the paraboloid is $\W$-hyperbolic with the inherited conductivity. This is an example that shows how to use the conductivity on non-compact surfaces to produce, (in diffusion terms), a "fast driven escape to infinity" effect on the ends of the surface that allows a transformation of the surface into a $\W$-hyperbolic surface.

Our example will be a surface immersed in the conductive $3$-dimensional Euclidean space $(M^{n}, g, \W) = (\mathbb{R}^{3}, g_{\rm can}, \W)$ with the following conductivity tensor field 
\begin{equation}
    \W = \sum_{i=1}^{3}\lambda_{i}\cdot E_{i} \otimes \theta^{i} \quad ,
\end{equation}
where the eigenvectors of $\W$, $\lambda_{i}$, are positive functions on $\mathbb{R}^{3}$ and $\theta^{i}$ are the dual one-forms metrically associated with the following orthonormal vector fields
\begin{equation}
\begin{aligned}
E_1:=&\frac{x}{\sqrt{\rho^2+1}}\partial_x+\frac{y}{\sqrt{\rho^2+1}}\partial_y-\frac{1}{\sqrt{\rho^2+1}}\partial_z\\
E_2:=&\frac{x}{\rho\sqrt{\rho^2+1}}\partial_x+\frac{y}{\rho\sqrt{\rho^2+1}}\partial_y+\frac{\rho}{\sqrt{\rho^2+1}}\partial_z\\
E_3:=&E_1\wedge E_2
=\frac{y}{\rho}\partial_x-\frac{x}{\rho}\partial_y,
\end{aligned}
\end{equation}
where $\rho=\sqrt{x^2+y^2}$.

Observe that despite the fact that $E_1,E_2,E_3$ are not well defined on $\rho=0$, if we choose $\lambda_1=\lambda_2=\lambda_3=1$, then $\W=Id$ and it admits an obvious extension to $\rho=0$. Thence we will always impose  that $\lambda_1=\lambda_2=\lambda_3=1$ close to the points where $\rho=0$, namely, close to the $z$-axis. For the points of $\mathbb{R}^3$ with $\rho>1$ we will chose
$$
\left\lbrace
\begin{array}{ccl}
\lambda_1&=&1,  \\
     \lambda_2&=&r^{\frac{16}{9}} e^{\frac{9}{8}r},\\
     \lambda_3&=&\frac{r^{\frac{16}{9}} e^{\frac{9}{8}r}}{2(1+4r^2)}.
\end{array}
\right.
$$
{By using the comparison for the capacities as it is done in Section  \ref{sec:com:cap}, taking into account that $\mathbb{R}^3$ is an hyperbolic manifold (in the Riemannian sense), and taking into account that outside a geodesic of radius large enough and some positive $\epsilon\in (0,1)$ 
$$
\lambda_*\geq 1-\epsilon,
$$
we can state that $(\mathbb{R}^3,g_{\rm can},\W)$ is $\W$-hyperbolic.
}

We consider the following paraboloid as the surface of $\mathbb{R}^3$ given by
$$
\Sigma:=\left\{(x,y,z)\in \mathbb{R}^3\, :\, z=\frac{x^2+y^2}{2}\right\}.
$$
Thence $\Sigma$ is the $0$-level set of 
$$
(x,y,z)\mapsto F(x,y,z)=x^2+y^2-2z.
$$
By using the gradient of $F$  and normalize we can found a unit normal vector field as
$$
(x,y,z)\mapsto N:=\frac{x}{\sqrt{\rho^2+1}}\partial_x+\frac{y}{\sqrt{\rho^2+1}}\partial_y-\frac{1}{\sqrt{\rho^2+1}}\partial_z=E_1.
$$
Then $(\Sigma, g_{|_{\Sigma}},\W_{\Sigma})$ is a conductive hypersurface  because the given ambient conductivity tensor $\W$ induces a well defined conductivity tensor field $\W^{\Sigma}$ on $\Sigma$. Indeed for any $p\in \Sigma$
$$
\W(T_p\Sigma)=\W\left({\rm span}_\mathbb{R}\{E_2(p),E_3(p)\}\right)=T_p\Sigma,
$$
and
$$
\W((T_p\Sigma)^\perp)=\W\left({\rm span}_\mathbb{R}\{E_1(p)\}\right)=(T_p\Sigma)^\perp.
$$
We are now applying Theorem \ref{thm:extrinsic2} with $\theta(r)=1$ because 
$$
\langle \mathcal{W}({\nabla^\Sigma}r), \nabla^\Sigma r\rangle=\lambda_2\left(1-\frac{\rho^4}{\rho^6+5 \rho^4+4 \rho^2}\right)\leq \lambda_2<\lambda_2+\lambda_3=1\cdot{\rm tr}_\Sigma \mathcal{W}
$$
and moreover can be checked that
\begin{align*}
    \langle 2\Vec{H}_\mathcal{W},\nabla r\rangle\geq&r^\frac{16}{9}e^{\frac{9}{8}r}\geq \langle \mathcal{W}({\nabla^\Sigma}r), \nabla^\Sigma r\rangle.
\end{align*}
Taking finally $h(t)=t$ we conclude that
$$
\int_1^\infty e^{-t}dt<\infty,
$$
and hence, $\Sigma$ is $\W$-hyperbolic.

\begin{remark}
   With reference to the setting in the above example: For any $p\neq 0$ in $\Sigma$ the tangent space at $p$ is spanned by the eigenvectors $E_2,E_3$ of $\W$ 
    $$
T_p\Sigma={\rm span}_{\mathbb{R}}\{E_2(p),E_3(p)\}
    $$
    and 
      $$
(T_p\Sigma)^\perp={\rm span}_{\mathbb{R}}\{E_1(p)\}.
    $$
    Moreover $E_2$ is proportional to the tangential gradient of the distance function to $0$. In the case $\lambda_1=\lambda_2=\lambda_3=1$ we had the classical paraboloid (and hence a parabolic surface). But since $\lambda_2=r^\frac{16}{9}e^{\frac{9}{8}h(r)}$ far away from $0$ there is a ``fast-driven-escape effect towards infinity'' in the end of the paraboloid. In Figure \ref{fig:paraboloid} we illustrate this behavior when $h(r)=r$.
\end{remark}
\begin{figure}
    \centering
    \includegraphics[scale=0.5]{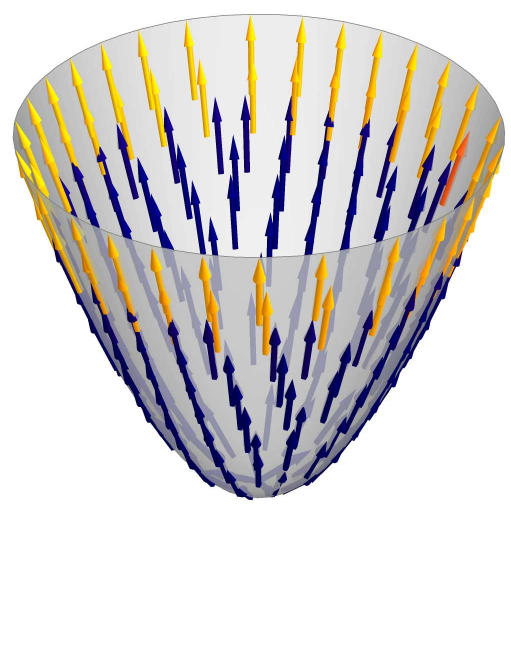}\quad\includegraphics[scale=0.5]{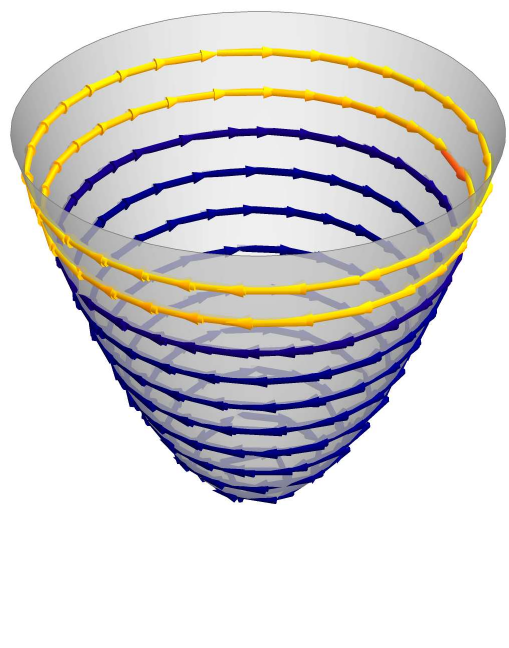}
    \caption{The vector field $\lambda_2E_2$ and $\lambda_3E_3$ on the paraboloid. The vector field $E_2$ is ``aligned" with the end of the paraboloid and $\lambda_2$ is growing as $r\to \infty$.}
    \label{fig:paraboloid}
\end{figure}
\end{example}

\subsection*{Acknowledgement}\label{subsecAcknowledgement}
The authors wish to express their gratitude to the Department of Applied Mathematics and Computer Science at the Technical University of Denmark and to the Department of Mathematics at Universitat Jaume I for their generous hospitality during the course of this work.
\bibliographystyle{plain}

\bibliography{POTENTIAL}

\end{document}